\numberwithin{equation}{section}
\let\savedbigtimes\bigtimes
\let\bigtimes\relax
\let\bigtimes\savedbigtimes
\crefname{appsec}{Appendix}{Appendices}
\newtheorem{theorem}{Theorem}[section]
\newtheorem{lemma}[theorem]{Lemma}
\newtheorem{corollary}[theorem]{Corollary}
\theoremstyle{definition}
\newtheorem{definition}[theorem]{Definition}
\newtheorem{example}[theorem]{Example}
\newtheorem*{assumption*}{Assumption}
\crefname{lemma}{Lemma}{Lemmas}
\crefname{theorem}{Theorem}{Theorems}
\crefname{definition}{Definition}{Definitions}
\crefname{fact}{Fact}{Facts}
\crefname{claim}{Claim}{Claims}
\crefname{proposition}{Proposition}{Propositions}
\newcommand{\E}{\mathbb{E}}
\newcommand{\Var}{\mathrm{Var}}
\renewcommand{\epsilon}{\varepsilon}
\newcommand{\Q}{\mathbb{Q}}
\renewcommand{\P}{\mathbb{P}}
\newcommand{\QQ}{\mathbb{Q}}
\newcommand{\beq}{\begin{equation}}
\newcommand{\eeq}{\end{equation}}
\newcommand{\cS}{\mathcal{S}}
\DeclareMathOperator{\Aut}{\textsf{Aut}}
\newcommand{\pcrit}{p_c}
\newcommand{\pE}{p_{\textsf{E}}}
\newcommand{\pmmt}{p_{\textup{\textsf{1M}}}}
\newcommand{\pAoN}{p_{\textup{\textsf{AoN}}}}
\newcommand{\Ind}[1]{\mathbf{1}\{#1\}}
\newcommand{\bZ}{\bm{Z}}
\newcommand{\mmse}{\textup{\textsf{MMSE}}}
\newcommand{\bH}{\bm{H}}
\newcommand{\bG}{\bm{G}}
\newcommand{\bJ}{\bm{J}}
\newcommand{\DKL}{D_{\textup{KL}}}
\newcommand{\Bern}{\textup{\textsf{Ber}}}
\newcommand{\BH}{}
\newcommand{\BM}{}
\newcommand{\BG}{}
\newcommand{\BO}{}
\newcommand{\BRR}{}
\newcommand{\EH}{}
\newcommand{\Btheta}{\hat{\theta}^\textup{B}}
\newcommand{\bS}{\bm{S}}
\newcommand{\bV}{\bm{V}}
\newcommand{\bY}{\bm{Y}}
\newcommand{\bP}{\mathbf{P}}
\newcommand{\PSI}{\psi}
\crefname{appsec}{Appendix}{Appendices}
\begin{document}

\title{Sharp thresholds in inference of planted subgraphs}

\author[E.\ Mossel, J.\ Niles-Weed, Y.Sohn, N.\ Sun, I.\ Zadik]{Elchanan Mossel$^{\star\circ}$, Jonathan Niles-Weed$^\dagger$, Youngtak Sohn$^\star$, Nike Sun$^\star$, \\ and Ilias Zadik$^\star$}
\thanks{\raggedright$^\star$Department of Mathematics, MIT;
$^\circ$MIT Institute for Data, Systems, and Society;
$^\dagger$Center for Data Science \& Courant Institute of Mathematical Sciences, NYU. Email: \texttt{\{elmos,youngtak,nsun,izadik\}@mit.edu}; \texttt{jnw@cims.nyu.edu}}

\begin{abstract}%
We connect the study of phase transitions in high-dimensional statistical inference to the study of threshold phenomena in random graphs. 

A major question in the study of the Erd\H{o}s--R\'enyi random graph $G(n,p)$ is to understand the probability, as a function of $p$, that $G(n,p)$ contains a given subgraph $H=H_n$. This was studied for many specific examples of $H$, starting with classical work of Erd\H{o}s and R\'enyi (1960). More recent work studies this question for general $H$, both in building a general theory of sharp versus coarse transitions  (Friedgut and Bourgain 1999; Hatami, 2012) and in results on the location of the transition (Kahn and Kalai, 2007; Talagrand, 2010; Frankston, Kahn, Narayanan, Park, 2019; Park and Pham, 2022).

In inference problems, one often studies the  optimal accuracy of inference as a function of the amount of noise. In a variety of sparse recovery problems, an ``all-or-nothing (AoN) phenomenon'' has been observed: Informally, as the amount of noise is gradually increased, at some critical threshold the inference problem undergoes a sharp jump from near-perfect recovery to near-zero accuracy (Gamarnik and Zadik, 2017; Reeves, Xu, Zadik, 2021). We can regard AoN as the natural inference analogue of the sharp threshold phenomenon in random graphs. In contrast with the general theory developed for sharp thresholds of random graph properties, the AoN phenomenon has only been studied so far in specific inference settings, and a general theory behind its appearance remains elusive.

In this paper we study the general problem of inferring a graph $H=H_n$ planted in an Erd\H{o}s--R\'enyi random graph, thus naturally connecting the two lines of research mentioned above. We show that questions of AoN are closely connected to first moment thresholds, and to a generalization of the so-called Kahn--Kalai expectation threshold that scans over subgraphs of $H$ of edge density at least $q$. In a variety of settings we characterize AoN, by showing that AoN occurs \emph{if and only if} this ``generalized expectation threshold'' is roughly constant in $q$. Our proofs combine techniques from random graph theory and Bayesian inference.

\end{abstract}


\maketitle

\date{\today}
\tableofcontents
\section{Introduction}

We consider the statistical model of a graph $H=H_n$ planted uniformly at random in an Erd\H{o}s--R\'enyi random graph $\bG\sim G(n,p)$. That is to say, the observation $\bY$ is the union of a uniformly random copy of $H$ in the complete graph $K_n$ (the signal) together with a sample from the Erd\H{o}s--R\'enyi measure $\Q_p=G(n,p)$ (the noise). Given the observation, the goal is to \emph{approximately recover} the hidden signal $H$, where recovery is measured in terms of the fraction of correctly recovered edges (see \eqref{e:mmse}). The model is formally specified in Definition~\ref{d:planted.subgr} below. This paper is concerned with the characterization of sharp information-theoretic thresholds in this inference problem.


Perhaps the most canonical such setting in the literature is the \emph{planted clique model}, where $H$ is a clique on $k$ vertices and $p=1/2$ \cite{Jer92}. It is a folklore result that \emph{exact recovery} of $H$ is possible when $k\ge (1+\epsilon)2\log_2 n$, and impossible when $k \le (1-\epsilon)2\log_2 n$. However, to the best of our knowledge, 
the natural question of whether one can recover a constant \emph{fraction} of $H$ when $k\leq (1-\epsilon)2 \log_2 n$ has not been previously considered, although strong impossibility results have been established in this regime for the slightly different detection framework \cite{Castro14}. We note that other specific choices of subgraphs have been studied in this literature, including the case where $H=H_n$ is a tree \cite{massoulie2019planting}, or $H=H_n$ a Hamiltonian cycle \cite{bagaria2020hidden}.

Obtaining a more refined understanding of statistical recovery guarantees in such models is further motivated by a growing body of recent work, initiated by \cite{gamarnik2022sparse} and \cite{reeves2021all}, which reveals that several high-dimensional Bayesian estimation models exhibit a sharp \textbf{``all-or-nothing'' (AoN) phase transition}: a very slight change in the signal-to-noise ratio separates a regime where one can recover almost all of the hidden signal (the ``all'' phase) from a regime where recovering even a constant fraction of the signal is impossible (the ``nothing'' phase). This is contrary to prior intuition derived from high-dimensional models where the transition is much smoother, e.g., compressed sensing or generalized linear models in the ``proportional regime'' \cite{reeves2019replica, barbier2019optimal}.

The underlying fundamental reasons why some inference models exhibit AoN, while others do not, remains --- to the best of our knowledge --- largely unknown. \BO This movitates us to study the general planted subgraph setting and ask: \EH
\begin{quotation}\centering{\emph{Which choices of hidden graphs $H=H_n$ lead to a sharp AoN transition?\\ \BO How do the graph theoretic properties of $H$ relate with sharp statistical phenomena? \EH  } }
\end{quotation}
Notably, the study of sharp thresholds for the occurence of specific subgraphs in the ``null'' (no hidden subgraph) Erd\H{o}s--R\'enyi random graph model $\Q_p=G(n,p)$ has a long and celebrated history dating back to \cite{MR125031}. This literature has recently led to the striking resolution of the Kahn--Kalai conjecture
\cite[Conjecture~1]{MR2312440}, which approximately locates the critical threshold for general monotone properties \cite{ MR4298747,park2022proof}. The community's notable understanding of transitions in the ``null'' model raises the possibility of better understanding information-theoretic transitions in the ``planted'' (hidden signal) models:
\begin{quotation}
\centering{\emph{How do the well-studied threshold phenomena in the ``null'' Erd\H{o}s--R\'enyi model\\ relate with statistical threshold phenomena in the ``planted'' model?}}
\end{quotation}
Our work is largely driven by this question.

\subsection{An overview}
In this paper, we aim to characterize the graph sequences $H=H_n$ for which the sharp \emph{all-or-nothing phenomenon} occurs in the associated inference problem. While the planted clique model is commonly formulated with fixed $p=1/2$ and $k$ varying, for general $H$ we adopt the more suitable perspective that we fix $H=H_n$ and vary $p$. Thus, given $H=H_n$, we ask whether there exists a critical value $\pAoN$ such that when $p \leq (1-\epsilon)\pAoN$ it is (information-theoretically) possible to recover an $(1-o(1))$-fraction of edges of the planted subgraph (the ``all'' phase), while when $p \geq (1+\epsilon)\pAoN$ it is impossible to recover any nontrivial fraction of the planted subgraph (the ``nothing'' phase). In other words, there is no intermediate ``something'' phase where one can recover a non-trivial fraction of the edges, but a non-trivial fraction must also be missed.

In this work we are able to characterize the occurrence of AoN in large families of planted subgraph models via a connection with a generalization of the \emph{expectation threshold} of \cite{MR2312440}. For a given $H=H_n$, the expectation threshold $\pE(H)$ is intended to approximate the critical threshold $\pcrit(H)$ at which $G(n,p)$ becomes likely to contain a copy of $H$. To make this more precise, for any graph $J$ we define the \emph{first moment threshold} $\pmmt(J)$ to be the minimum $p$ such that $G(n,p)$ contains at least one copy of $J$ \emph{in expectation}. The \emph{expectation threshold} $\pE(H)$ is the maximum first moment threshold among all subgraphs $J\subseteq H$, and the ``second Kahn--Kalai conjecture'' \cite[Conjecture 2]{MR2312440} posits that this is within a logarithmic factor of the threshold of interest $\pcrit(H)$.
The second Kahn--Kalai conjecture has been proved for bounded graphs $H$, see e.g. \cite[Theorem 4]{rucinski1987small}, but remains open in general.
\BO (It is not implied by the Kahn--Kalai conjecture for monotone properties that was mentioned above, see also \cite{mossel2022second}). \EH


In this work, we define for $q \in [0,1]$ the \emph{generalized expectation threshold} $\PSI_q(H)$ to be the maximum first moment threshold among all subgraphs $J\subseteq H$ such that $J$ contains at least a $q$ fraction of the edges of $H$ (Definition~\ref{d:gen.expectation.threshold}). Then $\PSI_0(H)$ is exactly the Kahn--Kalai expectation threshold. Our main result is a characterization of AoN for a large class of hidden graphs $H$ based on structural properties of $\PSI_q(H)$. We now state our main finding informally as follows:

\begin{theorem}[Main result, informally stated]
\label{thm:main_informal}
For various families of graph sequences $H_n$, the model of $H_n$ planted in $G(n,p)$ exhibits AoN if and only if $\PSI_q(H)$ is asymptotically constant as a function of $q \in (0,1)$.
\end{theorem}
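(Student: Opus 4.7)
The plan is to analyze AoN through the Bayesian posterior on the planted copy of $H$. Let $\sigma$ denote the uniformly random embedding of $H$ in $K_n$, so that $\bY = \sigma \cup \bG$ with $\bG \sim G(n,p)$. The posterior satisfies $\P(\sigma = \sigma' \mid \bY) \propto \one\{\sigma' \subseteq \bY\}$, and by the Nishimori identity the moments of the overlap
\[
O := |\sigma_1 \cap \sigma_2|/e(H)
\]
between two independent posterior samples $\sigma_1, \sigma_2$ equal those of the overlap between the true planted copy $\sigma$ and an independent copy $\sigma'$ compatible with $\bY$. The AoN question thus reduces to showing that $O$ concentrates either near $1$ (the ``all'' phase) or near $0$ (the ``nothing'' phase), with no mass in $(0,1)$.

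The next step is to decompose the partition function $Z(\bY) := \sum_{\sigma'} \one\{\sigma' \subseteq \bY\}$ by overlap with $\sigma$: for each edge-subgraph $J \subseteq H$, let $Z_J$ count copies $\sigma'$ with $\sigma \cap \sigma' = J$. A first-moment computation gives
\[
\E Z_J \asymp n^{v(H) - v(J)}\, p^{e(H) - e(J)}
\]
up to automorphism factors, so these are the mass contributions to overlap $q = e(J)/e(H)$, and each becomes $\Theta(1)$ at a $p$ determined by the density of the relevant subgraph of $H$. The generalized expectation threshold $\PSI_q(H)$, being the maximum first-moment threshold over $J\subseteq H$ with $e(J) \geq q\, e(H)$, then precisely identifies the scale below which \emph{no} overlap mass at level $\geq q$ can be supported by the noise, making $\PSI_q(H)$ the natural object controlling the shape of the posterior overlap distribution.

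For the sufficiency direction, suppose $\PSI_q(H) \asymp p^\star$ uniformly in $q \in (0,1)$. Below $p^\star$, every $\E Z_J$ with $J \neq H$ is $o(1)$ by definition of $\PSI_0$, so a truncated first/second moment argument shows the posterior concentrates on $\sigma$, yielding the ``all'' phase. Above $p^\star$, all overlap contributions simultaneously become unbounded: copies of $H$ are abundant even in $\bG$ alone (since $p \gg \PSI_0 \asymp \pcrit$ up to logarithmic factors by Kahn--Kalai--Park--Pham), and the posterior becomes approximately uniform over a large family of candidates whose expected overlap with $\sigma$ is $o(e(H))$, yielding the ``nothing'' phase. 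Because all thresholds cross at the same $p^\star$, there is no window of $p$ supporting a strictly intermediate overlap.

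For necessity, suppose $\PSI_q(H)$ is not asymptotically constant, so there exist $0 < q_1 < q_2 < 1$ and a regime of $p$ with $\PSI_{q_1}(H) \ll p \ll \PSI_{q_2}(H)$. At such $p$, every subgraph $J$ with $e(J) \geq q_2\, e(H)$ is rare in $\bG$ (so the posterior cannot ``miss'' more than a $(1-q_2)$-fraction of $\sigma$), while some subgraph $J^\star$ with $e(J^\star) \approx q_1\, e(H)$ is witnessed by many copies (so the posterior must spread over many candidates sharing only a $q_1$-fraction with $\sigma$). Combining an edge-marginal estimator built from the posterior with a matching planted-density impossibility bound then produces a genuine ``something'' phase, violating AoN. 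The main technical obstacle is a conditional second-moment estimate for the $Z_J$ that rules out rare dense-subgraph fluctuations near the threshold; this typically requires either restricting to structured families of $H$ (e.g.\ trees, cliques, regular graphs) where $\PSI_q$ admits explicit analysis, or invoking spread/covering machinery in the style of Frankston--Kahn--Narayanan--Park. This obstacle is presumably why the theorem is phrased for ``various families'' of $H_n$ rather than for all graph sequences.
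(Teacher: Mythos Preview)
Your high-level setup via Nishimori and the overlap decomposition is correct and matches the paper. However, there are two concrete gaps that would prevent the argument from going through.

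\textbf{Monotonicity error in the necessity direction.} You write that if $\PSI_q$ is not asymptotically constant, there exist $0<q_1<q_2<1$ and a regime $\PSI_{q_1}(H)\ll p\ll\PSI_{q_2}(H)$. But $\PSI_q(H)=\max\{\pmmt(J):|J|\ge q|H|\}$ is \emph{non-increasing} in $q$, so for $q_1<q_2$ we have $\PSI_{q_1}\ge\PSI_{q_2}$ and your regime is empty. The inequalities (and hence the roles of $q_1,q_2$ in the subsequent reasoning) must be reversed. Relatedly, the claim ``every subgraph $J$ with $e(J)\ge q_2 e(H)$ is rare in $\bG$'' does not follow from $p\ll\PSI_{q_2}$: the threshold $\PSI_{q_2}$ is a maximum over such $J$, so being below it only guarantees that \emph{one} such $J$ is rare. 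The paper handles this direction differently, via two separate MMSE bounds: it shows directly that for $p\ge(1+\epsilon)\PSI_q$ the normalized MMSE is at least $1-q-o(1)$ (so ``all'' fails), and that under the delocalization assumption, for $p\le(1-\epsilon)\PSI_{q'}$ with $q'$ small the MMSE is at most $1-\eta-o(1)$ (so ``nothing'' fails). AoN then forces $\PSI_q=(1+o(1))\PSI_{q'}$.

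\textbf{The ``nothing'' argument via Park--Pham is too coarse, and the ``all'' argument misses a key idea.} For ``nothing'' you invoke Kahn--Kalai--Park--Pham to say copies of $H$ are abundant in $\bG$ above $p^\star$. That result locates the appearance threshold only up to a logarithmic factor, so it cannot deliver AoN at the linear $(1\pm\epsilon)$ scale of the theorem. The paper instead proves ``nothing'' by a direct truncated second-moment bound on the posterior overlap (showing $\sum_{\ell\ge\delta K}\bP^{\otimes 2}(|\bH\cap\bH'|=\ell)/p^\ell=o(1)$), which never needs copies of $H$ to exist in the pure noise. For ``all'', your claim that below $p^\star$ every $\E Z_J$ with $J\ne H$ is $o(1)$ ``by definition of $\PSI_0$'' is not what $\PSI_0$ says, and in fact small dense subgraphs can appear in the noise even below $p^\star$ (this is exactly the delocalized case where $\PSI_0>\PSI_q$). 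The paper's key trick is to count, not copies of $H$, but copies of a carefully chosen \emph{subgraph} $J\subseteq H$ (the one realizing $\PSI_q$ for $q$ near $1$) that overlap little with the planted copy; this is what makes the first-moment bound close under the almost-balanced hypothesis.
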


\noindent See Theorems
\ref{t:dense.linear.aon} for the formal statement. As a corollary, we deduce for example that the model of a $k$-clique planted in $G(n,p)$ exhibits AoN if and only if $k$ is diverging with $n$ (see Corollary~\ref{c:planted.clique} in the Appendix). Our main result as stated above may not appear readily intuitive. For this reason, while stating our theorems in the following sections, we present several illustrative examples. Moreover, in Section~\ref{ss:intuition} we give general intuition by explaining how established results from random graph theory, alongside with the planting trick from the theory of random constraint satisfaction problems, are suggestive of such a connection.


It should be finally noted that a related, but incomparable, general investigation
was initiated by
\cite{huleihel2022inferring} for the information-theoretic limits of inferring a hidden
\emph{induced} subgraph in $G(n,p)$. By contrast, the observation $\bY$ in our setting is the \emph{union} of $G(n,p)$ with a hidden copy of $H$, i.e., the hidden copy need not be an induced subgraph of $\bY$. This difference makes our results incomparable; see also \cite[Section B.1]{huleihel2022inferring} for more discussion on differences between these two models.


\subsection{Further motivations}
\label{ss:motivations}

As indicated above, the problem of recovering a hidden graph is directly connected to two major lines of research: 
\begin{enumerate}[1.]
\item \textbf{Threshold phenomena in random graphs.}
Starting from the work of \cite{MR125031}, a major research goal in the field of random graphs has been to understand, for any given graph $H$, for which values of $p$ is $H$ likely to appear in an Erd\H{o}s--R\'enyi graph $\bG\sim G(n,p)$. Note that $H$ may be a fixed graph, such as a triangle \cite{MR125031}, but it can also be a graph whose size and structure depends on $n$,
such as a perfect matching \cite{MR125031}  or Hamilton cycle \cite{MR389666,MR0434878}.
Recent results on this question mainly go in one of two directions:

First, results in discrete Fourier analysis \cite{MR1645642,MR1678031,MR2925389} 
characterize general settings in which the transitions are sharp or coarse. A sharp transition means that the probability for $G(n,p)$ to contain a copy of $H$ is near zero for $p \le (1-\epsilon)\pcrit(H)$, and near one for $p \ge (1+\epsilon)\pcrit(H)$. A coarse transition means that the probability stays bounded away from zero and one for a non-trivial range of values $p \asymp \pcrit(H)$. The known characterizations of coarse thresholds can be interpreted as ``low complexity'' conditions, while sharp thresholds correspond to properties that do not have witnesses of low complexity. There has been a number of conjectures relating sharp thresholds in graphs to computational complexity, see e.g.~\cite{kalai2006perspectives}. 

A more recent line of research aims to roughly identify the location of the threshold in terms of (variants of) the Kahn--Kalai \textit{expectation threshold} \cite{MR2312440,MR2743011}. This  will be further discussed below. We note that the Fourier analysis literature mostly does not address the location of $\pcrit(H)$, while the expectation threshold literature mostly does not address the sharpness of the transition. The \BO optimal \EH results on the Kahn--Kalai conjectures are tight only up to logarithmic factors \cite{MR4298747,park2022proof}.

\item \textbf{Threshold behavior of inference problems (all-or-nothing phenomena).} AoN was first identified in the context of sparse linear regression~\cite{gamarnik2022sparse, reeves2021all}, and has since been established for numerous other models, including sparse tensor PCA~\cite{AoNtensor}, Bernoulli group testing~\cite{truong2020all,niles2021all,coja2022statistical}, and random graph matching~\cite{wu2022settling}. A striking observation is that AoN arises for several models which are conjectured to exhibit a statistical-computational gap, although no rigorous connections currently exist.

The setting studied by~\cite{macris2020all,AoNtensor} consists of Bayesian inference problems where one observes a rank-one spike corrupted by gaussian noise. 
In this setting, \cite{AoNtensor} give general \emph{sufficient} conditions on the prior distribution under which AoN occurs, \BO as a function of the noise variance $\sigma^2$. \EH These conditions amount to a quantitative ``anti-concentration'' requirement that independent draws from the prior are unlikely to be highly correlated. 

The models considered by~\cite{reeves2021all,truong2020all,luneau2022information,niles2021all,coja2022statistical} are generalized linear models. As in the gaussian setting, when the prior satisfies a suitable anti-concentration condition, AoN occurs \BO (here, as a function of the number of observations). \EH
AoN for a Bernoulli model with added noise was established by~\cite{wu2022settling}, who consider the problem of recovering the correspondence between a pair of correlated random graphs with randomly permuted vertex labels. In this last case, AoN arises as a function of the random graph density.
\end{enumerate}
The similarities between sharp thresholds and AoN phenomena are quite clear. However, the settings are very different. In the random graph setting we are looking for a graph that appears at random, while in inference, there is a true signal that is corrupted by random noise. In this paper we connect the two by studying the inference problem in the random graph setting. A major theme of this paper is revealing that the  all-or-nothing phenomenon in inference models are actually closely connected with the behavior of (variants of) the expectation threshold in the corresponding null models; see below for more discussion.

From a \textbf{technical standpoint}, the problem studied in this paper also naturally brings together the two communities: one studying properties of random graphs and the other studying inference in high-dimensions. Interestingly, the proofs in the paper combine ideas and techniques from both communities: some of our conditions are stated in terms of the expectation thresholds of subgraphs of varying sizes, thus refining definitions from \cite{MR2312440}. As customary in the study of expectation of thresholds, our proofs use variants of the second moment method but also recent ideas in the community, such as the one used for the ``spread lemma'' \cite{sunflower_annals, MR4298747}. However, we also heavily use the Bayesian perspective, in particular the planting trick that we borrow from the study of planted constraint satisfaction problems, see also \cite{achlioptas2008algorithmic,coja2022statistical}. Other concepts from high dimensional inference such as the I-MMSE relation from information theory \cite{guo2011estimation} and Nishimori identity from statistical physics \cite{nishimori2001statistical} also play a role in some of the proofs. 

We finally note a related line of work in coding theory. The goal of coding theory is to recover code words sent over a noisy channel. The analogy to the model we study in the paper is quite clear. Our ``codewords'' are the planted graphs, and the ``channel'' is the operation of taking a union with a sample from the Erd\H{o}s--R\'enyi measure. Many results in coding theory can be viewed as AoN statements (but for probability error metrics, not partial recovery metrics) as they show that the \emph{decoding error probability} jumps sharply from zero to one as a function of the channel noise. A recent striking example was established in~\cite{kudekar2016reed} for the Reed--Muller code and the erasure channel. An interesting aspect of their example is the use of variants of the KKL theorem \cite{kahn1988influence} from discrete Fourier analysis to prove a sharp threshold corresponding to AoN which then in turn allow to prove that these codes achieve the capacity of the channel.

\section{Main results}
\label{ss:main.results}

In this section we state our first main result, Theorem~\ref{t:dense.linear.aon}, which characterizes the occurrence of AoN (at linear scale) in sufficiently dense graphs under mild technical assumptions, and describe in high level the rest of our results. This section is organized as follows:
\begin{itemize}
\item In \S\ref{ss:gen.exp.thresholds} we formalize the planted subgraph model, and define the generalized expectation thresholds which were informally introduced above.
\item In \S\ref{ss:statement.t.aon.dense} we give the statement of Theorem~\ref{t:dense.linear.aon}, along with several motivating examples.
\item In \S\ref{ss:beyond} we give an overview of some of our results beyond the setting of AoN at linear scale for sufficiently dense graphs. We present also several relevant examples.
\item In \S\ref{ss:intuition} we offer some intuition behind our main result, based on known results in the random graphs literature.
\end{itemize}

\subsection{Generalized expectation thresholds}
\label{ss:gen.exp.thresholds}

We begin by formalizing the planted subgraph model discussed above:

\begin{definition}[planted subgraph model]
\label{d:planted.subgr}
Let $H=H_n$ be a given graph. We will abbreviate $v(H) \equiv v_H$ for the number of vertices of $H$, and $|H|\equiv e(H) \equiv e_H$ for the number of edges of $H$. We always assume $v(H) \le n$.
Let $\cS$ be the set of (isomorphic) copies of $H$ in the complete graph $K_n$,
and let $\bP$ be the uniform probability measure over $\cS$. We work on the so-called \emph{planted model} $\P_p$, the observation is $\bY=\bH\cup\bG$ where $\bH\sim\bP$ and $\bG$ is an independent sample from $G(n,p)$. The goal is to recover $\bH$ from $\bY$. For comparison, we also introduce the \emph{null model} $\Q_p$ where there is no hidden $\bH$, and we observe simply $\bY=\bG\sim G(n,p)$.
\end{definition}

Throughout the paper we identify all graphs on $n$ vertices, e.g., the instances of $\bY, \bH, \bG$, with their naturally corresponding binary vectors in $\{0,1\}^{\binom{n}{2}}.$ Our measure of ``recovery'' in the planted subgraph model is the fraction of correctly recovered edges, which naturally corresponds to the \emph{minimimum mean squared error} (MMSE):
	\beq\label{e:mmse}
	\mmse_N(p)
	\equiv\E_{\P_p}
	\bigg[\Big\|\bH-\E[\bH|\bY]\Big\|^2_2\bigg]
	= 
	\E_{\P_p} \bigg[\|\bH\|^2 - \Big\|\E[\bH|\bY]\Big\|^2\bigg]
	\,.\eeq
Since $\|\bH\|^2=e(H)$ almost surely, the MMSE must lie in $[0,e(H)]$, so we always normalize it by $e(H)$ in what follows. It is well known that the MMSE is nondecreasing in $p$, and we review the short proof in Lemma~\ref{l:mmse.nondec} below. In light of this, it is natural to ask in what situations the MMSE has a \emph{sharp} transition. To this end, following the AoN literature, we make the following definition:

\begin{definition}[all-or-nothing]\label{d:aon.linear.subgraph}
We say that the model from Definition~\ref{d:planted.subgr}
exhibits an \emph{all-or-nothing (AoN) transition} at critical probability $\pAoN=\pAoN(N)$ if
\[
\lim_{N \to \infty}  \frac{\mmse_N(p)}{e(H)}
= \left\{
\begin{array}{ll}
1 & \textup{for all $p\ge(1+\epsilon)\pAoN$} \\
0 & \textup{for all $p\le(1-\epsilon)\pAoN$}\,,
\end{array}
\right.
\]
for any constant $\epsilon>0$. 
\end{definition}

 As mentioned above, a theme of this paper is that the location of $\pAoN$ is closely connected to first moment thresholds of the subgraphs $J$ of $H$ in the null model $\QQ_p$. For a given subgraph $J$ of $K_n,$ the first moment threshold $\pmmt(J)$ is the smallest value of $p$ such that the expected copies of $J$ in a sample from the null model $\Q_p$ is at least one. More formally, for $\bG \sim G(n,p)$ let $\bZ(\bG)$ denote the number of copies of $J$ that are contained in $\bG$.
We define the \emph{first moment threshold}
$\pmmt(J)$ to be the value of $p$ that satisfies
	\beq\label{def:p1M_colt}
	1 = \E_{\Q_p} \bZ(\bG)
	= M_J p^{e(J)}\,,
	\end{equation}
that is, $\pmmt(J)\equiv M_J^{-1/e(H)}$, where $M_J\equiv M_{J,K_n}$, the number of copies of $J$ in $K_n$.

In Theorem~\ref{t:dense.linear.aon} below, we characterize the occurrence of AoN for a class of planted subgraphs $H$ that are sufficiently dense, meaning more precisely that $v(H)\to\infty$ and
    \beq\label{e:intro.dense}
  e(H) \gg v(H)\log v(H)
    \eeq
in the limit $n\to\infty$ (Definition~\ref{d:dense}). 

We start our quest with identifying the AoN threshold $\pAoN$ for this class of planted subgraphs. In view of the classical random graph theory, the first natural attempt would be to ask whether $\pAoN$ coincides with the first moment threshold $\pmmt(H)$ \eqref{def:p1M_colt} of the subgraph $H$. The answer is no, as we illustrate with Example~\ref{example:sun} below. In fact, the possibility of $\pAoN \ne \pmmt(H)$ is closely related to ideas underlying the Kahn--Kalai expectation threshold \cite{MR2312440} and the threshold $\pAoN$ found by Theorem~\ref{t:dense.linear.aon} turns out to indeed be a variant of the Kahn--Kalai expectation threshold. 


Let us take a moment to discuss the first moment threshold \eqref{def:p1M_colt} in more detail.
Note that $M_H\equiv M_{H,K_n}$ is the number of copies of $H$ in $K_n$:
	\beq\label{e:intro.M.H}
	M_H 
	= \frac{n! / (n-v_H)!}{|\Aut H|}
	= \frac{(n)_{v_H}}{|\Aut H|} \le n^{v(H)}\,,
	\eeq
where $\Aut(H)$ denotes the automorphism group of $H$. It follows that
    \beq\label{e:graph.first.mmt.threshold}
	\pmmt(H) \equiv  \bigg(\frac{1}{M_H}\bigg)^{1/e(H)}
	\ge \frac{1}{n^{v(H)/e(H)}}
	\eeq
for any $H$; and the sharpness of this trivial lower bound depends on the size of the automorphism group of $H$. However, for graphs that are \BH sufficiently dense (in the sense of \eqref{e:intro.dense}), \EH  the factor $|\Aut H| = v(H)^{O(v(H))}$ becomes negligible when raised to the power $1/e(H)$, so for such graphs
we obtain the simplification
	\beq\label{e:dense.threshold}
	\pmmt(H) = \frac{1 +o_n(1)}{n^{v(H)/e(H)}}\,.
	\eeq
This is formalized in Lemma~\ref{l:dense.threshold} below.

\begin{example}[AoN and first moment thresholds can differ]
\label{example:sun}
Let $J$ be a clique on vertices $\{1,\ldots,k\}$. We let $H$ be obtained from $J$ as follows: take $k$ additional vertices $\{k+1,\ldots,2k\}$, and form an edge between vertex $i$ and vertex $k+i$ for each $1\le i\le k$. Thus $H\supseteq J$, $v(H)=2k$, and $e(H)=e(J)+k$. We consider the model of $H$ planted in $G(n,p)$ (Definition~\ref{d:planted.subgr}).
Assume $k=k_n\to\infty$, and note that $J$ contains most of the edges of $H$. Since we are interested in edge recovery we expect that the inference problem for $H$ exhibits AoN at $\pAoN(J)=\pmmt(J)$ (cf.\ Corollary~\ref{c:planted.clique}); and indeed we prove this in \BH Theorem~\ref{t:dense.linear.aon} below (see also Example~\ref{example:sun:generalize}). \EH The first moment threshold for $H$ is much smaller than that of $J$, so it does not match the AoN transition:
	\[
	\pmmt(H)
	= \frac{1+o_n(1)}{n^{4/(k+1)}}
	\ll 
	\frac{1+o_n(1)}{n^{2/(k-1)}}
	=\pmmt(J)
	=\pAoN(J)\,.
	\]
(The threshold $\pmmt(H)$ can be obtained by direct calculation, or by appealing to \eqref{e:dense.threshold} or Lemma~\ref{l:dense.threshold}.)
\end{example}

The problem illustrated by Example~\ref{example:sun} is closely related to the ideas underlying the Kahn--Kalai conjectures.
Recall from above that the basic assertion of these conjectures is that
while $\pcrit(H)$ may be far from $\pmmt(H)$, there must be a subgraph $J\subseteq H$ for which $\pmmt(J)$ is not too far from $\pcrit(H)$. 
Clearly, this is highly analogous to Example~\ref{example:sun}, where the AoN transition is driven by the clique $J\subseteq H$.
That is to say, the transition can be estimated by
the \emph{expectation threshold}
	\beq\label{e:pE}
	\pE(H)
	= \max\Big\{
	 \pmmt(J)
	 : \varnothing \subsetneq
	 J \subseteq H\Big\}\,.
	\eeq
In particular, the ``second Kahn--Kalai conjecture'' \cite[Conjecture~2]{MR2312440} posits that
	\beq\label{e:second.kk}
	\pE(H) \lesssim
	\pcrit(H)
	\lesssim \pE(H) \log |H|\,,
	\eeq
where the lower bound is trivial, and the logarithmic factor is known to be necessary. We discuss this further in Example~\ref{x:matching} below.

Analogously to the Kahn--Kalai conjectures, it is natural to ask whether $\pAoN(H)$ is related to $\pE(H)$. In this paper we study the question of locating $\pAoN(H)$ up to $1+o_n(1)$ factors, as opposed to logarithmic factors. At this level of precision, it turns out that $\pAoN(H)$ does not necessarily coincide with $\pE(H)$, as illustrated by Example~\ref{example:aon:diff:expect} below. One reason is that, in the context of AoN, since we are interested in recovery of \emph{almost} all or \emph{almost} none of the edges, we expect that only linear-sized subgraphs of $H$ should be relevant to the transition. For this reason,
we define a slight generalization of the expectation threshold which turns out to be more relevant to the AoN question: 

\begin{definition}[generalized expectation threshold]
\label{d:gen.expectation.threshold}
For $q\in [0,1]$ and a given graph $H=H_n$, define the \emph{$q$-constrained expectation threshold} to be the largest first moment threshold among subgraphs of $H$ with at least $q$ fraction of the edges. That is,
	\beq\label{e:psi.q}
	\PSI_q\equiv
	\PSI_q(H)
	\equiv \max \bigg\{ \pmmt(J) :
	J\subseteq H\textup{ with }
	|J| \ge\max\Big\{1, |H| q\Big\}
	\bigg\}\,.\eeq 
In particular, $\PSI_0(H)$ is the same as the Kahn--Kalai expectation threshold $\pE(H)$.
\end{definition}

\subsection{Statement of AoN characterization for sufficiently dense graphs} 
\label{ss:statement.t.aon.dense}

The following theorem \emph{characterizes} AoN for sufficiently dense graphs $H$, subject to the additional technical requirement that $H$ must be ``delocalized,'' meaning roughly that $H$ does not contain a sublinear sized subgraph that is particularly dense. More precisely, we require that $H$ must contain a subgraph $J$ with $e(J)/e(H)\ge q$, such that $J$ has nearly maximal density among all subgraphs of $H$:
	\beq\label{e:intro.delocalized}
	\frac{v(J)}{e(J)}
	\le
	\min\bigg\{
	\frac{v(J')}{e(J')} :\varnothing\subsetneq J'\subseteq H
	\bigg\}
	+ \frac{C}{\log n}\,,
	\eeq
where $q$ and $C$ are constants not depending on $n$. If $H=H_n$ is dense and satisfies $\PSI_q(H) \ge c \cdot \PSI_0(H)$ for positive constants $q$ and $c$, then $H$ is delocalized; see Definition~\ref{d:delocalized} for details. For dense delocalized graphs, we have the following result, which greatly generalizes Example~\ref{example:sun}: 

\begin{theorem}[characterization of AoN for sufficiently dense graphs]\label{t:dense.linear.aon}
Suppose $H=H_n$ is sufficiently dense (\eqref{e:intro.dense} or Definition~\ref{d:dense}) and delocalized (\eqref{e:intro.delocalized} or Definition~\ref{d:delocalized}).
Then the model of $H_n$ planted in $G(n,p)$ exhibits AoN if and only if 
	\beq\label{e:almost.bal}
 	\lim_{n\to\infty} \frac{\PSI_q(H_n)}{\PSI_{q^\prime}(H_n)}=1
	\quad\textup{ for all }q,q'\in(0,1)\,.
 	\eeq
Moreover, in this case $\pAoN = (1+o(1))\PSI_q(H)$ for any $q\in(0,1)$. 
\end{theorem}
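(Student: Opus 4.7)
The strategy hinges on a clean structural fact: the likelihood $\P(\bY \mid \bH = H') = p^{|\bY|-e(H)}(1-p)^{\binom{n}{2}-|\bY|}$ is constant across $H' \subseteq \bY$, so the posterior $\P(\bH \mid \bY)$ is uniform on $\cS(\bY) := \{H' \in \cS : H' \subseteq \bY\}$. Together with the Nishimori identity this gives
$\mmse_N(p) = e(H) - \E_{\P_p}\bigl[|\bH^{(1)} \cap \bH^{(2)}|\bigr]$
for i.i.d.\ posterior samples $\bH^{(1)}, \bH^{(2)}$. Hence AoN amounts to a sharp transition of the normalized expected overlap from $1 - o(1)$ to $o(1)$. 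Under the flat and delocalization hypotheses $\PSI_q(H)$ stabilizes to a common value $L$ for $q \in (0,1)$; I take this as the candidate $\pAoN = L$.

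For the all phase $p \le (1-\epsilon) L$, my plan is to show that, for every constant $c > 0$, the expected number of ``$c$-bad'' competitors $\E[\#\{H' \ne \bH : H' \subseteq \bY,\ |H' \cap \bH| \le (1-c) e(H)\}]$ tends to zero; by Markov and posterior uniformity this forces $\E[|\bH^{(1)} \cap \bH^{(2)}|] \ge (1-c) e(H) + o(e(H))$, and sending $c \downarrow 0$ yields $\mmse_N/e(H) \to 0$. Grouping bad competitors by overlap type $J = H' \cap \bH$ and using the double-counting identity $N(J,H)\,\mathrm{ext}(J,H,K_n) = M_H N(J,H)^2/M_J$, the expectation reduces to
$\sum_{J \subsetneq H,\,e(J) \le (1-c) e(H)} N(J,H)^2 \cdot (p/\pmmt(H))^{e(H)} / (p/\pmmt(J))^{e(J)}$.
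For $J$ with $e(J) \ge q\,e(H)$ (fixed $q > 0$), the flat condition gives $\pmmt(J) \le (1+o(1))L$, producing exponential decay in $e(H)-e(J) \ge c\,e(H)$; for $J$ with $e(J)$ tiny the bare factor $p^{e(H)-e(J)}$ contributes its own decay. The density hypothesis \eqref{e:intro.dense} makes automorphism corrections $|\Aut H|^{1/e(H)} \to 1$ harmless, and the delocalization hypothesis \eqref{e:intro.delocalized} bounds the coefficients $N(J,H)^2$ uniformly enough to close the estimate.

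For the nothing phase $p \ge (1+\epsilon) L$, fix small $q > 0$ and a witness $J^\star$ for $\PSI_q$; since $p > \pmmt(J^\star)$, a second moment / spread-lemma bound in the null model produces many copies of $J^\star$ inside $\bG \subseteq \bY$. Each such copy, swapped for $\bH$'s embedded $J^\star$ block, defines a competitor $H' \in \cS(\bY)$ with $|H' \cap \bH| \le e(H) - e(J^\star) \le (1-q)e(H)$, and a Nishimori-symmetry / planting-trick argument promotes this abundance to the posterior bound $\E[|\bH^{(1)} \cap \bH^{(2)}|] \le (1-q)e(H) + o(e(H))$, i.e., $\mmse_N/e(H) \ge q - o(1)$; sending $q \uparrow 1$ after $n \to \infty$ yields $\mmse_N/e(H) \to 1$. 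For the ``only if'' direction, these two bounds together with monotonicity of the MMSE in $p$ pin down any AoN threshold: the nothing-phase bound forces $\pAoN \le \PSI_q(1+o(1))$ for all $q \in (0,1)$, while the all-phase-style bad-competitor estimate (applied with target overlap $q\,e(H)$ in place of $(1-c)e(H)$) forces $\pAoN \ge \PSI_q(1-o(1))$; combining gives $\pAoN = (1+o(1))\PSI_q$ for all $q \in (0,1)$, which is precisely the flat condition.

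The main technical obstacle is the all-phase moment bound, which must dominate every overlap-type contribution uniformly in $J$. The dense-and-delocalized hypotheses are exactly what is needed: density makes the threshold formulas clean via \eqref{e:dense.threshold}, while delocalization rules out the ``bottleneck'' small-dense subgraphs that would otherwise overwhelm the moment sum. The combinatorial bounds parallel and adapt to the planted setting the spread-lemma machinery of \cite{sunflower_annals, MR4298747, park2022proof} developed for the null model.
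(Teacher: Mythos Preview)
Your ``all'' phase outline is essentially the paper's approach (a truncated first moment in the planted model, cf.\ Lemmas~\ref{l:mmse.ubd} and~\ref{lem:dense:all}), though you misattribute the role of delocalization: the combinatorial coefficients $N(J,H)^2$ are controlled by the density hypothesis alone (they are at most $v(H)^{O(v(H))}=e^{o(e(H))}$), whereas delocalization is what bounds $\pmmt(J)$ for \emph{small} $J$ via $\alpha_0(H)\ge\alpha_q(H)-C/\log n$, preventing a tiny very dense subgraph from dominating the low-overlap part of the sum.

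The real gap is your ``nothing'' phase swap construction. Finding many copies of $J^\star$ in $\bG$ does \emph{not} yield competitors $H'\in\cS(\bY)$: swapping the $J^\star$-block of $\bH$ for a fresh copy of $J^\star$ on new vertices does not in general produce a graph isomorphic to $H$, because the edges of $H\setminus J^\star$ may be incident to vertices of the $J^\star$-block. Already for $H=K_k$ and $J^\star=K_{k-1}$ the swap fails: the $k-1$ edges from the remaining vertex to the \emph{new} $J^\star$-vertices need not lie in $\bY$. So there is no mechanism converting abundance of $J^\star$-copies into abundance of low-overlap $H$-copies, and the subsequent ``Nishimori / planting-trick'' step has nothing to act on. The paper's route (Lemma~\ref{l:mmse.lbd} and Lemma~\ref{l:not.all}) sidesteps this entirely: via the planting identity $\E_{\P}\bZ_\ell(\bS,\bY)=\E_{\Q}[\bZ^2(\ell,\bY)]/\E_{\Q}\bZ(\bY)$ one reduces to bounding $\bP^{\otimes2}(|\bH\cap\bH'|=\ell)/p^\ell$ for two i.i.d.\ \emph{prior} samples, which is a direct counting estimate (Lemma~\ref{lem:count:bound}) requiring no swap construction and no second-moment control of $J^\star$-copies. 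Your ``only if'' sketch inherits the same defect on the ``not nothing'' side; the paper instead proves (Lemma~\ref{l:not.nothing}) that for $p$ slightly below $n^{-\alpha_q}$ no copy of the density-maximizing $J$ with small overlap exists in $\bY$, which forces every $H'\subseteq\bY$ to overlap $\bH$ nontrivially and hence $\mmse_N/e(H)\le 1-\eta$.
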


We say that $H_n$ is \emph{almost balanced} if it satisfies condition~\eqref{e:almost.bal} (this derives from the terminology of balanced graphs; see Example~\ref{x:balanced} below). We illustrate Theorem~\ref{t:dense.linear.aon} with the following:

\begin{example}[generalization of Example~\ref{example:sun}]
\label{example:sun:generalize}
Suppose $H=H_n$ is sufficiently dense (as in \eqref{e:intro.dense}), and that there is a subgraph $J\subseteq H$ such that (i) $J$ contains most of the edges of $H$, 
	$|J|/|H| = 1-o_n(1)$,
and (ii) the first moment threshold of $J$ captures the expectation threshold of $H$,
    \[\frac{\pmmt(J)}{\pE(H)}
	=\frac{\pmmt(J)}{\PSI_0(H)}
	= 1-o_n(1)\,.\]
Then, for any $q<1$, it follows 
from Definition~\ref{d:gen.expectation.threshold}
that $
	\PSI_0(H)
	\ge \PSI_q(H)
	\ge \pmmt(J)
	= (1-o_n(1)) \PSI_0(H)\,.$
This implies that $H$ is delocalized (see Definition~\ref{d:delocalized}), and satisfies condition~\eqref{e:almost.bal}.
Therefore, Theorem~\ref{t:dense.linear.aon} 
implies that the model of $H_n$ planted in $G(n,p)$ exhibits AoN at $\pAoN(H)=\pmmt(J)$. This generalizes Example~\ref{example:sun}.
\end{example}

\begin{example}[dense balanced graphs]
\label{x:balanced}
Suppose $H=H_n$ is sufficiently dense (in the sense of \eqref{e:intro.dense}), and \emph{balanced} in the sense that $H$ has maximal edge density among all its subgraphs \cite{MR125031,MR620729}: 
	\[\frac{e(H)}{v(H)}
	= \max \bigg\{ \frac{e(J)}{v(J)} : J \subseteq H\bigg\}\,.
	\]
This implies that $H$ is delocalized (\eqref{e:intro.delocalized} or Definition~\ref{d:delocalized}). It follows from \eqref{e:dense.threshold} or
Lemma~\ref{l:dense.threshold} that we have $\PSI_q(H)=(1+o_n(1)) \pmmt(H)$ for all $q\in(0,1]$, 
so the almost-balanced condition \eqref{e:almost.bal} is satisfied. Therefore, it follows by Theorem~\ref{t:dense.linear.aon} that 
the model of $H$ planted in $G(n,p)$
 exhibits AoN at $\pAoN(H)=\pmmt(H)$. This implies the AoN result for the planted clique model (also established by another argument in the  Corollary \ref{c:planted.clique} in the Appendix).
\end{example}

Given Examples~\ref{example:sun:generalize} and \ref{x:balanced}, one might ask if it is true that $\pAoN$ always equals the expectation threshold.
We next present a simple example where the AoN and expectation thresholds differ:

\begin{example}[AoN and expectation thresholds can differ]\label{example:aon:diff:expect}
Let $H$ be the disjoint union of $J_0,\ldots,J_k$ where $J_0$ is a $(2k)$-clique while $J_i$ is a $k$-clique for each $1\le i\le k$, with $k= \log n$. Then $H$ is dense, since $v(H) = 2k + k^2 = (1+o_n(1)) k^2$, while
    \[
    e(H) = \binom{2k}{2} + k \binom{k}{2}
    = (1+o_n(1)) \frac{k^3}{2}
    \gg v(H) \log v(H)\,.
    \]
Note that $J_0$ is the densest subgraph of $H$,
with $e(J_0)/v(J_0) = (1+o_n(1)) k$. Using \eqref{e:dense.threshold} or Lemma~\ref{l:dense.threshold}, we have the lower bound
    \[
    \pE(H)=\PSI_0(H)
    \ge \pmmt(J_0)
    \stackrel{\eqref{e:dense.threshold}}{=}
    \frac{1+ o_n(1)}{n^{v(J_0)/|J_0|}}
    =\frac{1+ o_n(1)}{n^{2/(2k-1)}}
    =\frac{1+ o_n(1)}{e}
    \,.
    \]
On the other hand, $J_0$ accounts for only a negligible fraction of the edges of $H$. We will argue that if $|J|/|H|$ is lower bounded by any positive constant, then the density of $J$ cannot be much larger than $k/2$. To this end, let us decompose $v(J) = v_0 + \ldots v_k$ where $v_i$ is the number of vertices in $J\cap J_i$. Then
    \[
    |J|
    \le \sum_{i=0}^k \frac{(v_i)^2}{2}
    \le 2k \cdot \frac{v_0}{2}
    +k \sum_{i=1}^k \frac{v_i}{2}
    = \frac{k (v_0 + v(J))}{2}
    \le \frac{k (2k + v(J))}{2}
    \,.
    \]
We also trivially have $|J| \le v(J)^2/2$, so in order for $|J| \ge |H| q$ we must have $v(J) \gg k$.
It follows that for all $|J| \ge |H|q$, the right-hand side above is roughly $kv(J)/2$, and therefore
    \[
    \frac{|J|}{v(J)}
    \le \frac{k (2k + v(J))}{2 v(J)}
    = \frac{[1+ o_n(1)]k}{2}\,.
    \]
Moreover, the bound is clearly asymptotically achieved by taking most of the $v_i$ to be either zero or $k$. It follows that for $q>0$ we have
    \[
    \PSI_q(H)
    = \frac{1+o_n(1)}{n^{[1+o_n(1)]2/k}}
    = \frac{1+o_n(1)}{e^2}\,.
    \]
It is straightforward to check that $H$ is delocalized (\eqref{e:intro.delocalized} or Definition~\ref{d:delocalized}), so it follows from Theorem~\ref{t:dense.linear.aon} that this model exhibits AoN at  $\pAoN(H)=1/e^2$, which is smaller than the expectation threshold $\pE(H)$. 
\end{example}

\subsection{Results beyond dense graphs}
\label{ss:beyond}

One reason that (sufficiently) dense graphs are easier to analyze is that in this case, the first moment threshold \eqref{def:p1M_colt} can be approximated by the much simpler expression \eqref{e:dense.threshold}. We do not have a similarly strong characterization for general graphs, where we expect the order of the automorphism group to play a role. Indeed, in Example~\ref{x:cycle.out} below, we show that the dense assumption is necessary in Theorem~\ref{t:dense.linear.aon}.
However, in this work we also present results that are able beyond the dense graphs regime under the following assumptions:
\begin{enumerate}[1.]
\item \textbf{A general ``nothing'' phase} In \S\ref{ss:spread} we give a general ``nothing'' result that applies to all planted subgraphs $H$. In Theorem~\ref{t:nothing.spread.subgraph} we prove that when $p\gg \PSI_q(H)$ for all $q>0$, where $\PSI_q$ is the generalized expectation threshold of Definition~\ref{d:gen.expectation.threshold} ``nothing'' holds for general graphs $H$ (i.e., $\lim_{N \to \infty} \mmse_N(p)/e(H)=1$). This can be interpreted as an approximate variant of the second Kahn--Kalai conjecture, since for ``nothing'' to appear the ``noise'' $G(n,p)$ much have created an approximate copy of $H$ \BO that is nearly disjoint from the signal. \EH See Example~\ref{x:matching} below for further discussion and \S\ref{ss:spread} for more relevant references. 

\item \textbf{AoN for small sparse graphs} In \S\ref{ss:sparse} we prove a result for the sparse regime. In Theorem~\ref{t:aon.sparse} we proves AoN for models where the planted subgraph is small (of size $O(\log n/\log \log n)$), sparse, and \emph{strongly balanced} (Definition~\ref{d:strongly.balanced}).
The latter is a slight generalization of the notion introduced by
\cite{MR890231}, and is more restrictive than the balanced condition appearing in Example~\ref{x:balanced}. It follows from Theorem~\ref{t:aon.sparse} that if $H$ is a small tree or cycle --- more precisely, if $v(H)$ satisfies the bound \eqref{e:tree.cycle.bound} --- 
then the model of $H$ planted in $G(n,p)$ exhibits AoN at $\pAoN(H)=\pmmt(H)$ (see Example~\ref{x:small.tree.cycle}).

\item \textbf{AoN in the exponential scale} In \S\ref{ss:prelim.exp} and Section~\ref{s:exp} we consider AoN phenomena at exponential scale (Definition~\ref{d:aon.exp}) rather than linear scale (Definition~\ref{d:aon.linear.subgraph}), meaning the transition is in terms of $\log p$ rather than $p$ itself (when $p>\pAoN(H)^{1-\epsilon}$ ``nothing'' holds, but when $p<\pAoN(H)^{1+\epsilon}$ ``all'' holds). This relaxation of the AoN phenomenon allows us to establish a characterization without any density assumption, and therefore taking into account the automorphism group of $H$. In Theorem~\ref{t:aon.exp}, we show that under a mild technical condition of being ``first-moment-stable'' (Definition~\ref{d:first.mmt.stable}), AoN occurs at the exponential scale if and only if $H=H_n$ is ``first-moment-flat,'' meaning that
    \beq\label{e:intro.first.mmt.flat}
    \frac{\log\PSI_q(H_n)}{\log\PSI_{q'}(H_n)}
    = 1+o_n(1)
    \eeq
for all $q,q'\in(0,1)$ (see Definition~\ref{d:first.mmt.flat}, and compare with the almost-balanced condition \eqref{e:almost.bal}). We illustrate this with Example~\ref{x:small.bal} below. We also prove a location result, Theorem~\ref{thm:locating}, which says that the AoN and first moment thresholds must always coincide at the exponential scale.
\end{enumerate}

\begin{example}[perfect matchings]
\label{x:matching}
Let $H$ be a perfect matching on $n$ vertices, so the number of edges is $|H|=n/2$. 
Recovering a matching planted in an Erd\H{o}s--R\'enyi graph was proposed in the physics literature as a toy model for particle tracking~\cite{chertkov2010inference,MR4147946}. Subsequent work has rigorously analyzed planted matching recovery in closely related models, with remarkably precise results \cite{MR4350971,ding2021planted}. In particular, AoN generally does not occur in planted matching models. We note here that our general theorems, which are not tailored to the matching problem, nevertheless indicate weaker results of a similar flavor. To see this, let $J$ be any subgraph of $H$ with $|J|=|H|q$. Then
    \[M_J
    \stackrel{\eqref{e:intro.M.H}}{=}
    \frac{(n)_{v(J)}}{|\Aut J|}
    = \frac{n!/(n-nq)!}{2^{nq/2} (nq/2)!}\,.
    \]
It follows by Stirling's formula that
    \[
    \pmmt(J)
    = \frac{1}{(M_J)^{2/(nq)}}
    = \frac{[1+ o_n(1)]e}{n}
    \exp\bigg\{
    \log q + \frac{2(1-q)}{q} \log(1-q)
    \bigg\}
    \,.
    \]
The exponent is an increasing function on $0\le q\le 1$, so we conclude
$\PSI_q(H)= [1+o_n(1)]e/n$
for all $q\in[0,1]$. One can then check that the conditions of Theorem~\ref{t:aon.exp} are satisfied, so we can conclude AoN at the \emph{exponential} scale at $\pmmt(H)=1/n$.
Theorem~\ref{t:nothing.spread.subgraph} allows us to say something slightly more in one direction, namely that $p \gg 1/n$ is in the ``nothing'' regime for this model. However, for all $p\asymp 1/n$, the total number of edges in the observed graph will be of order $n$, so even a random subset of $n/2$ of the observed edges will have non-trivial overlap with the hidden matching. For this reason we expect that the model has an ``all'' phase for $p\ll 1/n$, a ``something'' phase around $p\asymp 1/n$, and a ``nothing'' phase for $p \gg 1/n$.

Let us also note that, in the context of the second Kahn--Kalai conjecture, it is known that $\pE(H)=\PSI_0(H)\asymp 1/n$ (as confirmed by the above calculation), but $\pcrit(H) \asymp (\log n)/n$. Indeed, in order to contain a perfect matching the graph must have minimum degree at least one, and the coupon collector effect is responsible for the $\log n$ factor. Moreover, this is the reason for the logarithmic factor in the second Kahn--Kalai conjecture \eqref{e:second.kk} (see the discussion of \cite[\S2]{MR2312440}).%
\end{example}

\begin{example}[small balanced graphs]\label{x:small.bal} 
We saw in Example~\ref{x:balanced} that dense balanced graphs exhibit AoN at the first moment threshold. If $H$ is balanced and sufficiently small, $v(H) \le n^{o(1)}$, but not necessarily dense, we can obtain the weaker result that AoN occurs at the exponential scale. Indeed, for $v(H) \le n^{o(1)}$, similar considerations as \eqref{e:dense.threshold} give
    \[
    \log\frac{1}{\pmmt(H)}
    = \frac{v(H)}{e(H)}\log n
    -O\bigg( \frac{v(J) \log v(J)}{e(H)}\bigg)
    = [1+o_n(1)]\frac{v(H)}{e(H)}\log n\,.
    \]
If the graph is balanced, then it follows that
    \[
    \log\PSI_q(H)
    = [1+o_n(1)]\frac{v(H)}{e(H)}\log n
    \]
for all $q\in[0,1]$. This implies that the conditions of Theorem~\ref{t:aon.exp} are satisfied, so we have AoN at the exponential scale at $p=n^{-v(H)/e(H)}$. \EH
\end{example}

\begin{example}[cycle with out-edges]
\label{x:cycle.out}
This example shows the necessity of the ``dense'' assumption in 
Theorem~\ref{t:dense.linear.aon}.
Let $H$ be formed by a cycle $J$ on $k$ vertices, together with one extra outgoing edge for each vertex of the cycle, so that in total $|H|=v(H)=2k$. Assume $k=\log\log n$, so that the model of $J$ planted in $G(n,p)$ has AoN at $\pAoN(J)=1/n$  by Example~\ref{x:small.tree.cycle}. 
It is not too difficult to verify that $\PSI_q(H)=(1+o(1))n^{-1}$ for every fixed $q\in [0,1]$ (see Example~\ref{x:cycle} for the details of calculating $\PSI_q$ for the $k$-cycle, which is similar). Therefore $H$ is delocalized (see Definition~\ref{d:delocalized}) and almost-balanced (condition~\eqref{e:almost.bal}). However, we claim that the model of $H$ planted in $G(n,p)$ does not exhibit AoN at the linear scale (Definition~\ref{d:aon.linear.subgraph}): for $p \le 0.9/n$, we are below $\pAoN(J)$, so we expect to be able to recover most of $J$. However, for $p \ge 0.5/n$, a linear fraction of the vertices of $J$ will typically have more than one outgoing edge in $G(n,p)$, meaning we will fail to recover a constant fraction of the edges in $H\setminus J$. It follows that for $0.5/n \le p \le 0.9/n$ we have neither ``all'' nor ``nothing,'' so AoN at linear scale (Definition~\ref{d:aon.linear.subgraph}) does not occur.  However, the conditions of Theorem~\ref{t:aon.exp} are satisfied (indeed, this is a special case of Example~\ref{x:small.bal}), so we do have AoN at the exponential scale.
\end{example}

\subsection{Intuition from random graph theory and proof outline}
\label{ss:intuition}

We close by offering some intuition behind our main result connecting AoN with the generalized expectation thresholds $\PSI_q(H)$ (see the informal statement of Theorem~\ref{thm:main_informal}, or the formal statements of Theorems 
\ref{t:dense.linear.aon} and \ref{t:aon.exp}).

We start with the intuition from the theory of random graphs. Recall the 
``second Kahn--Kalai conjecture'' \eqref{e:second.kk}, which
estimates $\pcrit(H)$ in terms of the expectation threshold $\pE(H)$.
This corresponds to the natural idea that the existence of a copy of $H$ in $G(n,p)$ is driven by the existence threshold of its ``least likely'' subgraph, measured in terms of the largest first moment threshold. Indeed, for a graph to appear in $G(n,p)$, clearly all its subgraphs need to appear as well. Moreover, motivated by well-established results for bounded $H$ (e.g.\ \cite[Theorem 5]{rucinski1987small}) we expect also a ``clustering'' picture to emerge in $G(n,p)$: whenever a copy of the ``least likely'' subgraph $J$ of $H$ appears in $G(n,p)$, we expect multiple copies of $H$ to appear as \emph{distinct extensions} of the same subgraph $J$ as a core (leading to a ``sunflower'' structure). For the interested reader, we remark that the sunflower picture corresponds to a ``condensation phase'' in the language of random constraint satisfaction problems \cite{krzakala2007gibbs}. We note however that the second Kahn--Kalai conjecture (and the suggested ``sunflower'' structure) remains open for general $H$, although several variants of it have been proved~\cite{MR4298747,park2022proof,mossel2022second_spread}.

We now explain how the above picture suggests our main result Theorem~\ref{thm:main_informal}. Roughly speaking, the generalized threshold $\PSI_q(H)$ (Definition~\ref{d:gen.expectation.threshold})
fails to be constant over $q\in(0,1)$ if and only if the ``least likely'' subgraph $J\subseteq H$ has $|J|/|H|=q\in(0,1)$. Moreover, suppose for simplicity that $\BO p' \equiv \max_{J' \not = J}\pmmt(J')<p$.
From the ``clustering'' intuition mentioned above, we expect that whenever the \emph{null} graph $G(n,p)$ has copies of $H$, all of them should appear as extensions of much fewer copies of the ``less likely'' $J$.
Now consider $\BO p'<p<\pmmt(J)$: since $p < \pmmt(J)$, the null $G(n,p)$ contains no copies of $J$, hence none of $H$. However, in the \emph{planted} model there should be a plethora of copies of $H$, all intersecting with the planted copy $\bH$ on its ``least likely'' subgraph $\bJ\subseteq\bH$ --- this is heuristically justified by the assumption that $p > p'$, and the ``least likely'' $\bJ\subseteq\bH$ is already planted. For such $p$, we expect that it will be possible to recover $\bJ$ (so we are not in a ``nothing'' phase), but it will be impossible to distinguish the true $\bH$ among all the overlapping copies (so we are not in an ``all'' phase). Our theorem establishes that this intuition is indeed valid, in fact as an equivalence statement, and that in many settings AoN can occur if and only if $\PSI_q(H)$ is roughly constant over $q$.  

Most of the above discussion is based on a heuristic picture and state-of-the-art conjectures in random graph theory. It gives the guiding intuition for this work, but we emphasize that our proof proceeds in a quite different manner, with more direct characterizations of ``all'' and ''nothing'' phases. Specifically we first establish, via a combination of the planting trick, Nishimori identity and a second moment method argument, a number of different results linking the MMSE of the planted model, with the subgraph structure of $H$ (see e.g.\ Lemmas~\ref{l:mmse.ubd} and \ref{l:mmse.lbd}). These intermediate results, allows us to apply the above intuition and obtain the AoN characterization for dense graphs $H$ (Section \ref{s:AoN.dense}), as well as the general ``nothing'' result via the spread condition (Theorem \ref{t:nothing.spread.subgraph}). For our characterization for AoN in the exponential scale, we first prove a variant of the I-MMSE relation in our setting (Lemma \ref{l:kl.divergence.derivative}) which allows to locate the AoN threshold in the exponential scale (Theorem \ref{thm:locating}). Then the planting trick alongside second moment method argument, allows us to argue again using our intuition of the previous paragraph and conclude the AoN characterization in the exponential scale (Theorem \ref{t:aon.exp}).

\section{AoN in a general Bernoulli model: definitions and key sufficient conditions}
\label{s:AoN.general}

In this section we derive general tools in a natural abstraction of the planted subgraph model,
which we term the Bernoulli inference model (Definition~\ref{d:bern.inference}),
where the graphs are replaced by more general binary vectors.
\begin{itemize}
\item In \S\ref{ss:bernoulli.model} we formally define the Bernoulli inference model,
and prove Theorem~\ref{t:bern.aon.linear} which gives a sufficient condition for AoN in this model. 

\item In \S\ref{ss:prelim.exp} we state Theorem~\ref{t:bern.aon.exp}, which gives an analogue of 
Theorem~\ref{t:bern.aon.linear} at exponential scale. \BO The exponential scale will be investigated further for the planted subgraph model in Section~\ref{s:exp}. \EH 

\item In \S\ref{ss:all} we prove the ``all'' results of Theorems \ref{t:bern.aon.linear} and \ref{t:bern.aon.exp} by a truncated first moment calculation in the planted model.

\item In \S\ref{ss:nothing} we prove the ``nothing'' results of Theorems \ref{t:bern.aon.linear} and \ref{t:bern.aon.exp} by a truncated second moment calculation in the null model.

\item In \S\ref{ss:spread} we prove Theorem~\ref{t:nothing.spread.bern}, which gives a ``nothing'' regime for the Bernoulli model in terms of the spread property. \BO As a consequence we deduce Theorem~\ref{t:nothing.spread.subgraph}, which was mentioned in the introduction as an inference version of the second Kahn--Kalai conjecture. \EH 
\end{itemize}

\subsection{General Bernoulli inference model}\label{ss:bernoulli.model} 
We begin this subsection by formally defining the general Bernoulli inference model. The main result of this subsection is Theorem~\ref{t:bern.aon.linear}, which gives a sufficient condition for AoN (at linear scale) in this model. As an application, at the end of this subsection we prove Corollary~\ref{c:planted.clique}, characterizing the occurrence of AoN in the planted clique model.

\begin{definition}[Bernoulli inference  model]
\label{d:bern.inference}
Let $N$ and $K \leq N$. Assume a \textit{uniform prior} $\bP$ on certain family $\cS$ of $M$ $K$-subsets of $[N]$, that is,
	\[\cS \subseteq \binom{[N]}{K}\,,
	\quad |\cS|=M\,,
	\]
and $\bP(S)=1/M$ for all $S\in\cS$. We assume the model is marginally symmetric, meaning that $\bP(i\in \bS)=K/N$ for all $i\in[N]$. Denote $\theta\equiv\mathbf{1}_S\in\{0,1\}^N$; with a minor abuse of notation we also write $\cS$ for the set of vectors $\theta=\mathbf{1}_S$ (for $S\in\cS$). In the \emph{Bernoulli inference model}, we first sample the (hidden) signal $\bS \sim \bP$, and denote $\theta^*\equiv\mathbf{1}_{\bS}$. We then let $\bV\subseteq[N]$ be the random subset which contains each element of $[N]$ independently with probability $p$, so that $w\equiv \mathbf{1}_{\bV}\sim\Bern(p)^{\otimes N}$. We observe $\bY\equiv\bS\cup\bV$, equivalently, $x\equiv \theta^*\vee w$. The goal is to recover $\theta^*=\mathbf{1}_{\bS}$ from $bY$. We let the planted model $\P\equiv\P_p$ denote the joint law of $(\bS,\bV,\bY)$. For comparison, we let $\Q\equiv \Q_p$ denote the null model where there is no hidden signal $\bS$, so $\bY=\bV$ and $x=w=\mathbf{1}_{\bV}\sim \Bern(p)^{\otimes N}$. 
\end{definition}

The Bernoulli model defined above
is clearly an abstraction of the
planted subgraph model (Definition~\ref{d:planted.subgr}),  with $[N]$ corresponding to the set of all available edges in $K_n$, and $\theta^*$ corresponding to the (edges of the) hidden subgraph. Note that the general Bernoulli model need not have the geometric structure of edges connected by vertices.

\BO Generalizing \eqref{e:mmse}, \EH our measure of ``recovery'' in the Bernoulli inference model (and as a consequence also for the planted subgraph model) is the \emph{minimimum mean squared error} (MMSE):
	\beq\label{e:mmse.bern}
	\mmse_N(p)
	\equiv\E_{\P_p}
	\bigg[\Big\|\theta^*-\E[\theta^*|x]\Big\|^2_2\bigg]
	= 
	\E_{\P_p} \bigg[\|\theta^*\|^2 - \Big\|\E[\theta^*|x]\Big\|^2\bigg]
	\,.\eeq
Since $\|\theta^*\|^2=K$, the MMSE must lie in $[0,K]$, so we always normalize it by $K$ in what follows. Recall that in the planted subgraph model, $\theta^*$ corresponds to the set of edges in the hidden subgraph; thus MMSE is a measure of \emph{edge recovery} rather than vertex recovery.

\BO 
It is well known that $\mmse_N(p)$ is a non-decreasing function of $p\in[0,1]$. This fact was already mentioned (and used) in the introduction, and we review the short proof here: \EH 

\begin{lemma}[monotonicity of MMSE]
\label{l:mmse.nondec}
$\mmse_N(p)$ is a non-decreasing function of $p\in[0,1]$.
\end{lemma}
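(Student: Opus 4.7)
The plan is to prove monotonicity by a monotone coupling of the noise that exhibits the observation at $p_2$ as a further-noised version of the observation at $p_1$, and then invoke the data-processing property of conditional expectation.

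First I would reduce to the following coupling construction. Fix $p_1 \le p_2$ in $[0,1]$ and set $p' := (p_2-p_1)/(1-p_1) \in [0,1]$. Draw three mutually independent objects: $\bS \sim \bP$, $\bV_1 \sim \Bern(p_1)^{\otimes N}$, and $\bV' \sim \Bern(p')^{\otimes N}$. Define $\bV_2 := \bV_1 \vee \bV'$; a coordinatewise check gives $\bV_2 \sim \Bern(p_2)^{\otimes N}$. Now set $\bY_i := \theta^* \vee \bV_i$ for $i = 1,2$, where $\theta^* = \mathbf{1}_{\bS}$. By construction $\bY_2 = \bY_1 \vee \bV'$ is a deterministic function of $(\bY_1, \bV')$, and $\bV'$ is independent of $(\theta^*, \bV_1)$, hence of $(\theta^*, \bY_1)$. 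The marginal laws of $(\theta^*, \bY_i)$ coincide with those under $\P_{p_i}$, so the MMSE under this coupling equals $\mmse_N(p_i)$ for $i=1,2$.

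Second, I would apply the standard $L^2$-projection / data-processing argument. Since $\bY_2$ is a function of $(\bY_1, \bV')$, we have $\sigma(\bY_2) \subseteq \sigma(\bY_1, \bV')$, and therefore the $L^2$-best estimator of $\theta^*$ measurable with respect to $\bY_2$ has mean squared error at least that of the $L^2$-best estimator measurable with respect to $(\bY_1, \bV')$:
\[
\mmse_N(p_2) \;=\; \E\bigl\|\theta^* - \E[\theta^*\mid \bY_2]\bigr\|^2 \;\ge\; \E\bigl\|\theta^* - \E[\theta^*\mid \bY_1, \bV']\bigr\|^2.
\]
Because $\bV'$ is independent of $(\theta^*, \bY_1)$, one has $\E[\theta^*\mid \bY_1, \bV'] = \E[\theta^*\mid \bY_1]$ almost surely, so the right-hand side equals $\mmse_N(p_1)$. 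This yields $\mmse_N(p_1) \le \mmse_N(p_2)$, as desired.

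There is no real obstacle: both ingredients are standard, and the argument is purely structural, avoiding any calculus-based identity such as I-MMSE (which is referenced elsewhere in the paper and could also be used, at the cost of extra machinery). The only points that warrant a brief check are the marginal law of $\bV_2$ and the independence of $\bV'$ from $(\theta^*, \bY_1)$, both of which are immediate from the coordinatewise construction.
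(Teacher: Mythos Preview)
Your proof is correct and follows essentially the same approach as the paper: both construct a monotone coupling of the noise so that the observation at the larger $p$ is a degraded version of that at the smaller $p$, and then invoke the $L^2$-projection (data-processing) property to conclude. Your version is slightly more explicit about the coupling (via the extra noise $\bV'$) and about the independence used, but the argument is the same in substance.
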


\begin{proof}For $0\le p\le p'\le 1$, consider a coupling $\P$ where 
$\theta\sim\bP$,
$w$ is marginally distributed according to $\Bern(p)^{\otimes N}$, 
$w'$ is  marginally distributed according to $\Bern(p)^{\otimes N}$,
and $w \le w'$ (coordinatewise). Then, under this coupling,
with $x=\theta^*\vee w$ and $x'=\theta^*\vee w'$, we have
	\[
	\mmse_N(p)
	=\E\Big[\Var(\theta^*\,|\,x)\Big]
	=\E\Big[\Var(\theta^*\,|\,x,x')\Big]
	\le\E\Big[\Var(\theta^*\,|\,x')\Big]
	=\mmse_N(p')\,,
	\]
where the second identity is justified because if we already know $x$, then knowing $x'$ gives no additional information on $\theta^*$.
\end{proof}

\BO Given Lemma~\ref{l:mmse.nondec}, it is natural to ask in what situations the MMSE has a \emph{sharp} transition. We therefore make the following definition, generalizing Definition~\ref{d:aon.linear.subgraph} for the planted subgraph model: \EH

\begin{definition}[all-or-nothing]\label{d:aon.linear.bern}
We say that the model from Definition~\ref{d:bern.inference}
exhibits an \emph{all-or-nothing (AoN) transition} at critical probability $\pAoN=\pAoN(N)$ if
\[
\lim_{N \to \infty}  \frac{\mmse_N(p)}{K}
= \left\{
\begin{array}{ll}
1 & \textup{for all $p\ge(1+\epsilon)\pAoN$} \\
0 & \textup{for all $p\le(1-\epsilon)\pAoN$}\,,
\end{array}
\right.
\]
for any constant $\epsilon>0$. We will sometimes refer to this as ``AoN at linear scale,'' to distinguish it from the scaling of Definition~\ref{d:aon.exp} below.
\end{definition}

One of the goals of this paper is to characterize the conditions for the all-or-nothing phenomenon to hold, and if so, how to locate the threshold $\pAoN$. As mentioned above, a theme of this paper is that the location of $\pAoN$ is closely connected to first moment thresholds in the null model $\QQ_p$. For a given family of sets $\cS$ (on which the prior $\bP$ is uniformly distributed upon) the first moment threshold is the smallest value of $p$ such that the expected number of elements of $\cS$ in a sample from the null model $\Q_p$ is at least one. More formally, for $\BO \bY=\bV$ let $\bZ(\bY)$ denote the number of elements of $\cS$ that are contained in $\bY$,
	\beq\label{e:Z}
	\bZ(\bY)
	= \sum_{S\in\cS} \Ind{S\subseteq \bY}\,.
	\eeq
We define the \emph{first moment threshold}
$\pmmt$ to be the value of $p$ that satisfies
	\beq\label{def:p1M}
	1 = \E_{\Q_p} \bZ(\bY)
	= \sum_{S\in\cS} \Q_p(S\subseteq \bY)
	= M p^K\,,
	\end{equation}
that is, $\pmmt\equiv M^{-1/K}$. We use this value of $p$ to specify the following \emph{growth condition} on the prior, which controls from above the probability that two independent draws of the prior overlap in a specific number of elements:
	\begin{equation}\label{e:growth}
	\limsup_{N\to\infty} \sup_{0\le\ell\le K}
	\frac1K 
	\log \bigg(
	\frac{\bP^{\otimes 2}( |\bS \cap \bS'|= \ell)}{(\pmmt)^\ell}
	 \bigg)
	 \leq 0\,,
\end{equation} where $\bS$ and $\bS'$ are independent draws from $\bP$ (the uniform measure over $\cS$). We highlight that in the language of~\cite{AoNtensor}, the growth condition~\eqref{e:growth} is a bound on the \emph{overlap rate function} of the prior $\bP$; that paper showed that a similar condition implies the existence of an all-or-nothing threshold for sparse estimation problems with gaussian noise.



\begin{theorem}[AoN for Bernoulli inference model under growth condition]
\label{t:bern.aon.linear}
Suppose the prior $\bP$ satisfies condition~\eqref{e:growth}, and moreover that $K=K(N) \to\infty$. Then the associated Bernoulli inference model
(Definition~\ref{d:bern.inference}) has AoN at $\pAoN=\pmmt$ as defined by \eqref{def:p1M}: that is,
\[
\lim_{N \to \infty}  \frac{\mmse_N(p)}{K} 
= \left\{
\begin{array}{ll}
1 & \textup{if $p\ge(1+\epsilon)\pmmt$,} \\
0 & \textup{if $p\le(1-\epsilon)\pmmt$,}
\end{array}
\right.\]
for any constant $\epsilon>0$.
\end{theorem}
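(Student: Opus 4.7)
The plan is to reduce the MMSE to an expected posterior overlap via the Nishimori identity, and then handle the two phases separately using first and second moment estimates driven by the growth condition \eqref{e:growth}. The key observation is that under the Bernoulli noise model, the posterior $\P_p(\bS = S \mid \bY)$ is uniform over the set of \emph{consistent signals} $\CC(\bY) := \{S \in \cS : S \subseteq \bY\}$, since every such $S$ has the same likelihood. Writing $N(\bY) := |\CC(\bY)|$ and applying Nishimori gives
\[
\mmse_N(p)/K \;=\; 1 - \E_{\P_p}\bigl[|\bS \cap \bS'|\bigr]/K,
\]
where $\bS'$ is an independent posterior sample given $\bY$. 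The combinatorial input I will repeatedly use is that $\E_{\P_p}[\#\{S \in \CC(\bY) : |S \cap \bS| = \ell\}] = M\,\P^{\otimes 2}(|\bS \cap \bS'| = \ell)\,p^{K-\ell}$, which by \eqref{e:growth} is at most $e^{o(K)}(p/\pmmt)^{K-\ell}$ for each $\ell < K$.

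For the ``all'' direction ($p \le (1-\eps)\pmmt$), I would run a truncated first moment in the planted model. For any fixed $\delta > 0$, the expected number of alternative consistent signals $S \in \cS \setminus \{\bS\}$ with $|S \cap \bS| \le (1-\delta)K$ is bounded by $\sum_{\ell \le (1-\delta)K} e^{o(K)}(1-\eps)^{K-\ell}$, a geometric sum dominated by $e^{o(K)}(1-\eps)^{\delta K}/\eps \to 0$. By Markov's inequality, with probability $1-o(1)$ every alternative consistent signal has overlap at least $(1-\delta)K$ with $\bS$, forcing $|\bS \cap \bS'| \ge (1-\delta)K$; sending $\delta \to 0$ gives $\mmse_N(p)/K \to 0$.

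For the ``nothing'' direction ($p \ge (1+\eps)\pmmt$), I would bound $\E[|\bS \cap \bS'|]/K \le \delta + \P[|\bS \cap \bS'| > \delta K]$ and show the latter probability vanishes for every $\delta > 0$. Writing this probability as $\E_{\P_p}[\bZ_\delta(\bY)/N(\bY)]$ with $\bZ_\delta(\bY)$ the number of consistent signals of overlap exceeding $\delta K$ with $\bS$, the growth condition yields $\E_{\P_p}[\bZ_\delta] \lesssim e^{o(K)} N^* (1+\eps)^{-\delta K}$, where $N^* := Mp^K = (1+\eps)^K$. \textbf{The main obstacle is the random denominator $N(\bY)$}: the growth condition only gives a second moment bound within an $e^{o(K)}$ factor, so standard concentration (Paley--Zygmund or Chebyshev) only shows $N(\bY) \gtrsim N^*$ with probability $e^{-o(K)}$, which is too weak. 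The workaround is the \emph{planting formula} $\P_p(\bY = Y) = (N(Y)/N^*)\,\Q_p(\bY = Y)$, which immediately yields the one-sided Markov-type bound $\P_p(N(\bY) < A) \le A/N^*$ for all $A > 0$. Splitting according to whether $N(\bY) \ge A$ or not gives
\[
\E_{\P_p}[\bZ_\delta/N] \;\le\; \E_{\P_p}[\bZ_\delta]/A \;+\; A/N^*,
\]
and optimizing at $A = \sqrt{\E_{\P_p}[\bZ_\delta]\, N^*}$ yields $2\sqrt{\E_{\P_p}[\bZ_\delta]/N^*} \le 2\,e^{o(K)}(1+\eps)^{-\delta K/2} \to 0$ since $K \to \infty$, which completes the proof.
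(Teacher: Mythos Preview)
Your proof is correct and follows essentially the same route as the paper: the ``all'' direction is the paper's truncated first moment in the planted model (Lemma~\ref{l:mmse.ubd}), and your ``nothing'' direction---splitting on the event $\{N(\bY)\ge A\}$ via the planting identity $\P_p(N(\bY)<A)\le A/N^*$ and optimizing $A$---is exactly the content of Lemmas~\ref{lem:planting_first} and~\ref{l:mmse.lbd}, with your square-root optimization equivalent to the paper's $\epsilon+p(\delta)/\epsilon$ bound. The only cosmetic difference is that the paper routes the planted expectation $\E_{\P_p}[\bZ_\delta]$ through a change of measure to a null second moment (Lemma~\ref{lem:change_of_measure}), whereas you compute it directly; both reduce to the same quantity $\sum_{\ell>\delta K}\bP^{\otimes 2}(|\bS\cap\bS'|=\ell)/p^\ell$.
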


From Theorem~\ref{t:bern.aon.linear} it is straightforward to deduce the following characterization of AoN in the planted clique model:


\begin{corollary}\label{c:planted.clique}
Consider the model of a clique $H$ on $k$ vertices planted in $G(n,p)$.
\begin{enumerate}[(a)]
    \item If $k=k_n \rightarrow +\infty$  then the model exhibits AoN at
    \[
    \pAoN(H) = \pmmt(H) = \binom{n}{k}^{-1/\binom{k}{2}}\,.
    \]
    \item If $k=k_n=O(1)$ then the model does not exhibit AoN at any value of $p$.
\end{enumerate}
\end{corollary}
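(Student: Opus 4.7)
The plan is to realize the planted clique model as an instance of the Bernoulli inference model (Definition~\ref{d:bern.inference}) with $N = \binom{n}{2}$, $K = \binom{k}{2}$, $\cS$ equal to the family of edge sets of $k$-cliques in $K_n$, and $M = \binom{n}{k}$, and then apply Theorem~\ref{t:bern.aon.linear}. Three hypotheses need to be verified: (i) marginal symmetry $\bP(e\in\bH)=K/N$, which is immediate from vertex-transitivity, since every edge lies in the planted clique iff both its endpoints are in the chosen $k$-vertex set; (ii) $K=\binom{k}{2}\to\infty$ as $k\to\infty$; (iii) the growth condition \eqref{e:growth}. The value in (iii) is the threshold $\pmmt = M^{-1/K} = \binom{n}{k}^{-1/\binom{k}{2}}$, agreeing with the formula stated in the corollary.

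For (iii), note that two independent cliques $\bS,\bS'$ intersect in exactly $\binom{j}{2}$ edges where $j=|V(\bS)\cap V(\bS')|$ is hypergeometric, so
\[
\bP^{\otimes 2}\Big(|\bS\cap\bS'|=\tbinom{j}{2}\Big) = \frac{\binom{k}{j}\binom{n-k}{k-j}}{\binom{n}{k}}.
\]
Using Stirling to write $\log\binom{n}{k} = k\log n - O(k\log k)$ and $\log\binom{n-k}{k-j} = (k-j)\log n - O(k\log k)$, a direct computation gives
\[
\log\frac{\bP^{\otimes 2}(|\bS\cap\bS'|=\binom{j}{2})}{\pmmt^{\binom{j}{2}}}
= -\,\frac{j(k-j)}{k-1}\log n \;+\; O(k\log k),
\]
uniformly in $j\in\{0,1,\dots,k\}$. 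The first term is nonpositive and the second is $o(k^2)=o(K)$, so \eqref{e:growth} holds. Theorem~\ref{t:bern.aon.linear} then yields AoN at $\pAoN=\pmmt=\binom{n}{k}^{-1/\binom{k}{2}}$.

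\textbf{Part (b).} The plan is to exhibit, for every fixed constant $c>0$, a nondegenerate limit of $\mmse_N(c\,\pmmt)/K$ lying strictly between $0$ and $1$; this will rule out AoN at any scale. Because $k=O(1)$, at $p=c\,\pmmt \asymp n^{-2/(k-1)}$ the number of $k$-cliques contained in a null sample $\bG\sim G(n,p)$ converges in distribution to $\textup{Poi}(c^{\binom{k}{2}})$, by a standard method-of-moments/Poisson approximation for small subgraph counts in $G(n,p)$ at the first-moment threshold. A short coupling (planting trick) shows that in the planted model the count $X(\bY)$ of $k$-cliques in $\bY$ converges in distribution to $1+W$ with $W\sim\textup{Poi}(c^{\binom{k}{2}})$, and that with probability tending to $1$ all these cliques are pairwise vertex-disjoint, since the expected number of pairs of $k$-cliques sharing at least one vertex is $O(1/n)$.

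Given disjointness, symmetry of the posterior (every $k$-clique contained in $\bY$ has equal posterior probability of being $\bH$, since the prior is uniform and all cliques have the same size $K$) gives
\[
\E[\bH\mid\bY] \;=\; \frac{1}{X(\bY)}\sum_{j=1}^{X(\bY)}\mathbf{1}_{C_j},\qquad \big\|\E[\bH\mid\bY]\big\|^2 \;=\; \frac{K}{X(\bY)}.
\]
Bounded convergence then yields
\[
\frac{\mmse_N(c\,\pmmt)}{K} \;\longrightarrow\; f(c) \;:=\; 1 - \E\!\left[\frac{1}{1+W}\right] \;\in\; (0,1).
\]
Finally, any hypothetical AoN threshold $\pAoN$ leads to a contradiction: passing to a subsequence along which $\pAoN/\pmmt\to L\in[0,\infty]$, we can choose $c$ so that $c\,\pmmt$ lies in one of the two AoN regimes relative to $\pAoN$ (with $\epsilon$-room), which would force $f(c)\in\{0,1\}$, contradicting $f(c)\in(0,1)$.

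The main obstacle I anticipate is carrying out part (b) cleanly: namely, justifying Poisson convergence of $X(\bY)$ together with the asymptotic disjointness of the cliques found in the planted model, and doing so uniformly enough in $c$ to make the ruling-out of AoN go through for every sequence $\pAoN$. Everything else (marginal symmetry, the Stirling bound for the growth condition, and the reduction from disjointness to the $K/m$ formula) is routine.
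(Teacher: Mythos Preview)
Your proof is correct, and part (a) follows the paper's approach exactly: verify the growth condition \eqref{e:growth} for the clique prior and invoke Theorem~\ref{t:bern.aon.linear}.

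For part (b), your route differs from the paper's in an interesting way. The paper argues via the Nishimori identity \eqref{e:nishimori}: it reduces to showing the expected edge overlap between two posterior samples is bounded away from both $0$ and $1$, then observes that (i) $\E_{\P_p}\bZ = O(1)$ so $\bZ$ is stochastically bounded (hence overlap is bounded away from $0$), and (ii) the null graph $\bG\setminus\bH\sim G(n-k,p)$ contains a $k$-clique with nonnegligible probability, so $\bZ\ge2$ with nonnegligible probability (hence overlap is bounded away from $1$). This is purely qualitative and avoids any limit theorem for $\bZ$. Your approach is more quantitative: you establish the full distributional limit $X(\bY)\Rightarrow 1+\mathrm{Poi}(c^{\binom{k}{2}})$, argue asymptotic disjointness of the cliques, and then compute the exact limiting MMSE as $1-\E[1/(1+W)]$. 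This buys you the precise value of the limiting MMSE curve, at the cost of appealing to the classical Poisson approximation for strictly balanced subgraph counts. The paper's argument is shorter because it only needs bounds, not limits; yours gives strictly more information.

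The concern you flag about uniformity in $c$ is not a real obstacle: your subsequence argument only requires the pointwise limit $\mmse_N(c\,\pmmt)/K\to f(c)\in(0,1)$ for each fixed $c$, which is exactly what you establish. No uniform convergence is needed.
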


\BO For the proof of Corollary~\ref{c:planted.clique} we 
review the so-called ``Nishimori identity,'' \EH which refers to the fact that the pair $(\theta^*,\theta')$ (the original signal, together with one sample from the posterior distribution) is equidistributed as the pair $(\theta',\theta'')$ (two independent samples from the posterior distribution). This is a basic consequence of Bayes's rule:
	\begin{align}\nonumber
	\P(\theta^*=\theta_1,\theta'=\theta_2)
	&=\bP(\theta^*=\theta_1)
	\sum_x
	\P(x\,|\,\theta^*=\theta_1)
	\P(\theta'=\theta_2\,|\,x) \\
	&= \sum_x \P(x)
	\P(\theta^*=\theta_1\,|\,x)
	\P(\theta'=\theta_2\,|\,x)
	= \P(\theta''=\theta_1,\theta'=\theta_2)\,,
	\label{e:nishimori}
	\end{align}
and the pair $(\theta',\theta'')$ is clearly exchangeable.

\begin{proof}[Proof of Corollary~\ref{c:planted.clique}]
For part (a), by Theorem~\ref{t:bern.aon.linear} it suffices to check the growth condition \eqref{e:growth}. The quantities of Definition~\ref{d:bern.inference} in the context of the planted clique model are
    \[
    M = \binom{n}{k}\,,\quad
    K = \binom{k}{2}\,.
    \]
Let $\bS$ be a sample from the prior $\bP$, that is, $\bS$ is uniformly random among all $k$-cliques contained in the complete graph $K_n$. If $\bS'$ is an independent copy of $\bS$, then
    \begin{align*}
    &\frac{\bP^{\otimes2}(
    |\bS\cap \bS'|\ge\ell)}{\pmmt(H)^\ell}
    \le
    \sum_{0\le t\le k}
    \mathbf{1}\bigg\{ \binom{t}{2} \ge\ell\bigg\}
    \binom{k}{t}
    \binom{n-k}{k-t}
    \binom{n}{k}^{-1 + \ell/\binom{k}{2} } \\
    &\qquad\le \exp(o(k^2))
    \sum_{0\le t\le k}
    \mathbf{1}\bigg\{ \binom{t}{2} \ge\ell\bigg\}
    n^{k-t} (n^k)^{-1+\ell/\binom{k}{2}}\\    &\qquad\le\frac{\exp(o(k^2))}{
    \exp\{ \Theta[\ell^{1/2} - \ell/k^2] \log n\}
    }
    \le\exp(o(k^2))\,.
    \end{align*}
This verifies condition~\eqref{e:growth}, and the claim follows.

For part (b) we focus on the regime $p\asymp\pcrit(H)$ with $\pcrit(H)$. It suffices to show that for any such $p$, the normalized MMSE does not tend to zero or one. In light of \eqref{e:nishimori}, it suffices to show that if $\bH',\bH''$ are two samples from the posterior distribution given $\bY=\bH\cup\bG$, then the expected edge-overlap between $\bH'$ and $\bH''$, normalized by $|H|$, is not tending to zero or one. To this end, recall that $\bZ=\bZ(\bY)$ denotes the total number of $k$-cliques contained in $\bY$. We have
	\[
	\E_{\P_p}\bZ
	= \sum_{w=0}^k
	\binom{k}{w}
	\binom{n-k}{k-w}
	p^{\binom{k}{2}-\binom{w}{2}}
	\]
where $w$ is the number of vertices shared with the planted clique. Recalling $k\asymp1$, we have
	\[
	\E_{\P_p}\bZ
	\asymp
	\sum_{w=0}^k n^{k-w}
	\frac{(n^k)^{\binom{w}{2}/\binom{k}{2}}}{n^k}
	\asymp
	\sum_{w=0}^k \bigg(\frac{1}{n^w}\bigg)^{1- \frac{w-1}{k-1}}
	\le O(1)\,.
	\]
This shows that $\bZ$ is a stochastically bounded random variable. For AoN to hold, the only possibility is that $\bZ=1$ with high probability. However, for such values of $p$ it follows from standard results that the graph $\bG\setminus\bH\sim G(n-k,p)$ contains a clique with nonnegligible probability, which implies $\bZ\ge2$ with nonnegligible probability. This shows that the normalized MMSE does not converge to zero or one in this regime of $p$, so AoN does not occur.
\end{proof}

\subsection{Results for exponential scale}
\label{ss:prelim.exp}

In this subsection we state another result concerning AoN in the Bernoulli model, Theorem~\ref{t:bern.aon.exp} below. It is very similar to Theorem~\ref{t:bern.aon.linear}, but with AoN happening at a larger scale (Definition~\ref{d:aon.exp}), and under a weaker condition. The proofs for Theorems~\ref{t:bern.aon.linear} and \ref{t:bern.aon.exp} are very similar, and we present both in this section. 

\begin{definition}[AoN at exponential scale]
\label{d:aon.exp}
We say that the model from Definition~\ref{d:bern.inference}
 exhibits an \emph{all-or-nothing transition at the exponential scale} 
 with critical probability $\pAoN=\pAoN(N)$ if
\[
\lim_{n \to \infty}  \frac{\mmse_N(p)}{K} = \left\{
\begin{array}{ll}
1 & \textup{for all $p\ge(\pAoN)^{1-\epsilon}$} \\
0 & \textup{for all $p\le(\pAoN)^{1+\epsilon}$}\,,
\end{array}
\right.\]
for any constant $\epsilon>0$, where we further require
$\pAoN$ to stay bounded away from one for the definition to be meaningful.
\end{definition}

We note that if $p$ is bounded away from one then 
	\[
	(1+o_n(1)) p
	= \exp\bigg( o_n(1) + \log p\bigg)
	= \exp\bigg( \Big(1+o_n(1)\Big) \log p\bigg)\,,
	\]
so AoN at the linear scale at a threshold bounded away from one implies AoN at the exponential scale at that same threshold. The converse is clearly false. The following is a variant of the growth condition \eqref{e:growth}:
\beq\label{e:weak.growth}
\limsup_{N\to\infty} \sup_{0\le\ell\le K}
\frac{1}{\log M}\log \bigg(
	\frac{\bP^{\otimes 2}( |\bS \cap \bS'|= \ell)}
		{(\pcrit)^\ell}
	\bigg)
	 \leq 0\,,
\eeq
We assume throughout that $K=O(\log M)$, which ensures that $\pmmt$ is bounded away from one. Under this assumption, condition~\eqref{e:growth} implies condition~\eqref{e:weak.growth}.

\begin{theorem}[AoN at exponential scale under \eqref{e:weak.growth}]
\label{t:bern.aon.exp}
Suppose that the prior $\bP$ satisfies \eqref{e:weak.growth}, and moreover that $K\to\infty$. Then the associated model has AoN at the exponential scale at $\pAoN=\pcrit$ as defined by \eqref{def:p1M}: that is,
\[
\lim_{N \to \infty}  \frac{\mmse_N(p)}{K} 
= \left\{
\begin{array}{ll}
1 & \textup{if $p\ge (\pcrit)^{1-\epsilon}$} \\
0 & \textup{if $p\le (\pcrit)^{1+\epsilon}$}\,.
\end{array}
\right.\]
for any constant $\epsilon>0$.
\end{theorem}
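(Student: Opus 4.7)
My plan is to prove Theorem~\ref{t:bern.aon.exp} by following the same scheme as Theorem~\ref{t:bern.aon.linear}: ``all'' via a truncated first moment under $\P_p$, and ``nothing'' via a truncated second moment under $\Q_p$, with the calculations recalibrated for the exponential scale. The key reason the weaker condition~\eqref{e:weak.growth} still suffices is that, under $p\le\pmmt^{1+\epsilon}$ (respectively $p\ge\pmmt^{1-\epsilon}$), the ratio $p/\pmmt$ (resp.\ $\pmmt/p$) is at most $\pmmt^\epsilon$, which under the running assumption $K=O(\log M)$ is a constant in $(0,1)$ bounded away from $1$. This leaves enough slack for the multiplicative $M^{o(1)}$ factor from~\eqref{e:weak.growth} to be absorbed into geometric-series estimates that are not available at the linear scale.

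\textbf{``All'' direction ($p\le\pmmt^{1+\epsilon}$).} By the Nishimori identity~\eqref{e:nishimori} and the fact that the posterior of $\bS$ given $\bY$ is uniform over $\{S\in\cS:S\subseteq\bY\}$, one has $\mmse_N(p)/K=\E_{\P_p}[|\bS\setminus\bS'|]/K$ for $\bS'$ an independent posterior sample. I would split at a truncation level $k_0$,
\[
\E_{\P_p}[|\bS\setminus\bS'|]\le k_0+K\cdot\P_{\P_p}(|\bS\setminus\bS'|>k_0),
\]
and bound the tail probability by $\E_{\P_p}[\bZ^*_{>k_0}]$ (using $\bZ\ge 1$ under $\P_p$), where $\bZ^*_{>k_0}$ counts $S'\in\cS$ with $S'\subseteq\bY$, $S'\ne\bS$, and $|S'\setminus\bS|>k_0$. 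A direct first-moment calculation using $M\pmmt^K=1$ yields
\[
\E_{\P_p}[\bZ^*_{>k_0}]=\sum_{\ell<K-k_0}(p/\pmmt)^{K-\ell}\cdot\frac{\bP^{\otimes 2}(|\bS\cap\bS'|=\ell)}{\pmmt^\ell}.
\]
Applying~\eqref{e:weak.growth} to bound the rightmost ratio by $M^{o(1)}$ uniformly in $\ell$, and using $p/\pmmt\le\pmmt^\epsilon<1$, the geometric sum gives $\E_{\P_p}[\bZ^*_{>k_0}]\le M^{o(1)-\epsilon k_0/K}$. A choice such as $k_0=\lceil K\sqrt{\delta_N}+\sqrt{K}\rceil$, where $\delta_N$ is the defect in~\eqref{e:weak.growth}, makes $k_0/K\to 0$ while forcing $(o(1)-\epsilon k_0/K)\log M\to-\infty$, so that both terms in the truncated bound vanish.

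\textbf{``Nothing'' direction ($p\ge\pmmt^{1-\epsilon}$).} By the monotonicity of the MMSE (Lemma~\ref{l:mmse.nondec}), it suffices to treat $p=\pmmt^{1-\epsilon}$. Nishimori gives $\mmse_N(p)/K=1-\E_{\P_p}[|\bS\cap\bS'|]/K$, and I would truncate at overlap level $\tau K$:
\[
\E_{\P_p}[|\bS\cap\bS'|]\le\tau K+K\cdot\P_{\P_p}(|\bS\cap\bS'|\ge\tau K).
\]
The tail probability equals $\E_{\P_p}[\bZ^\tau/\bZ]$, where $\bZ^\tau$ counts $S'\in\cS$ with $S'\subseteq\bY$ and $|S'\cap\bS|\ge\tau K$. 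A direct consequence of Bayes' rule (the planting identity) is
\[
\E_{\P_p}[\bZ^\tau/\bZ]=\frac{\E_{\Q_p}[\bZ_{\ge\tau K}/\bZ\cdot\Ind{\bZ\ge 1}]}{\E_{\Q_p}[\bZ]},
\]
where $\bZ_{\ge\tau K}$ is the number of ordered pairs $(S,S')\in\cS^2$ compatible with $\bY$ satisfying $|S\cap S'|\ge\tau K$. Since trivially $\bZ_{\ge\tau K}\le\bZ^2$, one has $\bZ_{\ge\tau K}/\bZ\le\sqrt{\bZ_{\ge\tau K}}$ on $\{\bZ\ge 1\}$, and Jensen then gives $\E_{\Q_p}[\bZ_{\ge\tau K}/\bZ\cdot\Ind{\bZ\ge1}]\le\sqrt{\E_{\Q_p}[\bZ_{\ge\tau K}]}$. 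A computation parallel to the ``all'' direction yields $\E_{\Q_p}[\bZ_{\ge\tau K}]\le\E_{\Q_p}[\bZ]^2\cdot M^{o(1)-\epsilon\tau}$, so that $\P_{\P_p}(|\bS\cap\bS'|\ge\tau K)\le M^{o(1)-\epsilon\tau/2}\to 0$ for any fixed $\tau>0$. Letting $\tau\to 0$ after $N\to\infty$ concludes $\mmse_N(p)/K\to 1$.

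\textbf{Main obstacle.} The most delicate step is handling the factor $1/\bZ$ in the ``nothing'' direction: since $\bZ$ is of order $\E_{\Q_p}[\bZ]\ge M^\epsilon$ under $\P_p$, the naive bound $1/\bZ\le 1$ would waste a polynomial-in-$M$ factor and break the argument. The trick $\bZ\ge\sqrt{\bZ_{\ge\tau K}}$ followed by Jensen effectively recovers this missing factor and reduces the problem to a first-moment estimate on $\bZ_{\ge\tau K}$ that~\eqref{e:weak.growth} is just strong enough to control, thanks to the exponential separation $\pmmt/p\le\pmmt^\epsilon$ providing geometric decay. In the ``all'' direction, the analogous subtlety is the calibration of the truncation level $k_0$ to simultaneously make the trivial Hamming contribution $k_0/K$ vanish and dominate the defect $\delta_N$ in~\eqref{e:weak.growth} by the exponent $\epsilon k_0/K$.
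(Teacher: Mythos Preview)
Your argument is correct, and the ``all'' direction is essentially identical to the paper's: both reduce to the truncated first-moment bound $\sum_{\ell<K-k_0}\bP^{\otimes 2}(|\bS\cap\bS'|=\ell)Mp^{K-\ell}=o_N(1)$, applying \eqref{e:weak.growth} and $p/\pmmt\le\pmmt^\epsilon=M^{-\epsilon/K}$. The only cosmetic difference is that the paper fixes $\delta>0$, proves $\mmse_N(p)/K\le\delta+o_N(1)$ via Lemma~\ref{l:mmse.ubd}, and sends $\delta\downarrow0$ afterwards, whereas you diagonalize directly via $k_0=k_0(N)$.

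The ``nothing'' direction is also correct but uses a genuinely different device to handle the factor $1/\bZ$. The paper (Lemma~\ref{l:mmse.lbd}) invokes the planting bound $\P_p(\bZ\le\epsilon'\E_{\Q_p}\bZ)\le\epsilon'$ (Lemma~\ref{lem:planting_first}) to replace $1/\bZ$ by $1/(\epsilon'\E_{\Q_p}\bZ)$ off a bad event of probability $\epsilon'$, then applies Lemma~\ref{lem:change_of_measure} to reach
\[
\P_p(|\bS\cap\bS'|\ge\tau K)\le\epsilon'+\frac{1}{\epsilon'}\cdot\frac{\E_{\Q_p}[\bZ_{\ge\tau K}]}{(\E_{\Q_p}\bZ)^2},
\]
and optimizes over $\epsilon'$. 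You instead use the deterministic inequality $\bZ\ge\sqrt{\bZ_{\ge\tau K}}$ (from $\bZ_{\ge\tau K}\le\bZ^2$) followed by Jensen for $\sqrt{\cdot}$, which gives the same ratio $\E_{\Q_p}[\bZ_{\ge\tau K}]/(\E_{\Q_p}\bZ)^2$ under a square root directly, without an auxiliary parameter. Both routes land on the identical bound $M^{o(1)-\epsilon\tau/2}$, so there is no quantitative gain either way; the paper's version is more modular (the planting lemma is stated once and reused), while yours is self-contained for this proof.
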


\subsection{Proofs for ``all'' regime by truncated first moment in planted model}\label{ss:all}
In this subsection we prove the positive (``all'') results of 
Theorems~\ref{t:bern.aon.linear} and \ref{t:bern.aon.exp}, showing in both cases,
if $p$ is sufficiently below $\pmmt$, then the (normalized) MMSE tends to zero.


\begin{lemma}[MMSE upper bound]\label{l:mmse.ubd}
	{\BG For any $\delta > 0$},
if the prior $\bP$ satisfies
	\beq\label{e:mmse.ubd.condition}
	\sum_{\ell \le (\BG{1-\delta)}K}
	\bP^{\otimes 2}\Big( |\bS\cap \bS'|=\ell\Big) Mp^{K-\ell}
	= o_N(1)
	\eeq
then $\mmse_N(p)/K \le \BG{\delta} + o_N(1)$.
\end{lemma}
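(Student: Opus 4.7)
The strategy is a standard truncated first moment computation in the planted model, combined with the Nishimori identity \eqref{e:nishimori}. First, I would use Nishimori to rewrite the MMSE in terms of an expected overlap. Writing $\theta'$ for a sample from the posterior distribution of $\theta^*$ given $x$, the identity $\E[\|\E[\theta^*|x]\|^2]=\E[\ip{\theta^*}{\theta'}]$ together with $\|\theta^*\|^2=K$ yields
\[
\frac{\mmse_N(p)}{K}
= 1-\frac{\E[\ip{\theta^*}{\theta'}]}{K}
= 1-\frac{\E\,|\bS\cap\bS'|}{K},
\]
where $\bS'$ is the subset corresponding to $\theta'$. Hence it suffices to show that with probability $1-o_N(1)$ we have $|\bS\cap\bS'|\ge (1-\delta)K$; indeed, combined with the trivial upper bound $|\bS\cap\bS'|\le K$, this yields $\E\,|\bS\cap\bS'|\ge (1-\delta)(1-o_N(1))K$, and therefore $\mmse_N(p)/K\le \delta+o_N(1)$.

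To control the posterior, I would use the truncated first moment. Let $m=\lfloor(1-\delta)K\rfloor$ and define
\[
\bZ_{\le m}(\bY)\equiv\sum_{S'\in\cS}\Ind{S'\subseteq\bY,\,|S'\cap\bS|\le m}.
\]
Under $\P_p$, conditioned on $\bS=S$, we have $\bY=S\cup\bV$ with $\bV\sim\Bern(p)^{\otimes N}$, so $\P_p(S'\subseteq\bY\mid\bS=S)=p^{K-|S\cap S'|}$. A direct calculation using that $\bP$ is uniform on $\cS$ of size $M$ gives
\[
\E_{\P_p}\bZ_{\le m}(\bY)
= \sum_{S,S'\in\cS}\bP(S)\Ind{|S\cap S'|\le m}\,p^{K-|S\cap S'|}
= M\sum_{\ell\le m}\bP^{\otimes 2}\bigl(|\bS\cap\bS'|=\ell\bigr)\,p^{K-\ell},
\]
which is precisely the quantity assumed to be $o_N(1)$ in \eqref{e:mmse.ubd.condition}.

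By Markov's inequality, $\P_p(\bZ_{\le m}(\bY)\ge 1)=o_N(1)$. On the complementary event, every $S'\in\cS$ with $S'\subseteq\bY$ satisfies $|S'\cap\bS|>m\ge(1-\delta)K$; in particular, since the posterior is supported on such $S'$, the posterior sample $\bS'$ satisfies $|\bS\cap\bS'|>(1-\delta)K$ with probability $1-o_N(1)$. Combined with the first paragraph, this gives $\mmse_N(p)/K\le\delta+o_N(1)$, as desired. There is no real obstacle here: the entire argument is a truncated first moment plus Nishimori, and the only step requiring any care is correctly identifying the right-hand side of the first moment computation with the hypothesis~\eqref{e:mmse.ubd.condition}.
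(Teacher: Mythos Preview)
Your proof is correct and follows essentially the same approach as the paper: a truncated first moment in the planted model showing that, with high probability, every $S'\in\cS$ contained in $\bY$ has overlap at least $(1-\delta)K$ with $\bS$, and then converting this into an MMSE bound. The only cosmetic difference is that the paper bounds $(\theta^*,\Btheta)$ directly via linearity of the posterior mean rather than invoking a posterior sample and Nishimori, but these are equivalent formulations of the same step.
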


\begin{proof}
Recall that under the planted model $\P=\P_p$, the observation is $\bY=\bS\cup\bV$ where $\bV$ contains each element of $[N]$ independently with probability $p$. We will argue that, with high probability, \emph{any} element $S'\in\cS$ satisfying $S'\subseteq\bY$ must have overlap at least $K(1-\delta)$ with the planted set $\bS$. Since the Bayes estimator $\Btheta=\E(\theta^*\,|\,x)$ is the average of $\mathbf{1}_{S'}$ over all such $S'$, it follows by linearity that $(\theta^*,\Btheta) \ge K({\BG 1 - \delta}-o_N(1))$ with high probability, and therefore we have
	\[
	\frac{\mmse}{K}
	\stackrel{\eqref{e:mmse}}{=} 
	\frac{1}{K} \E_{\P_p} \bigg[\|\theta^*\|^2 - 
		\Big(\theta^* ,\Btheta\Big)\bigg]
	\le {\BG \delta}+o_N(1)\,,
	\]
as claimed. Thus, using Markov's inequality, the result will follow once
we show
	\beq\label{e:truncated.first.mmt}
	\sum_{\ell \le K\delta} \E_{\P} \bZ_\ell(\bS,\bY) = o_N(1)\,,
	\eeq
where $\bZ_\ell(\bS,\bY)$ denotes the number of subsets $S'\in\cS$ that are contained in $\bY$ and have overlap $\ell$ with the hidden signal $\bS$. Then note that for any $\ell$ we have	
	\beq\label{e:all.bound.A}
	\E_{\P} \bZ_\ell(\bS,\bY)
	{\BG = \E_{\P}} \bigg|\Big\{S'\in\cS : |\bS\cap S'|=\ell\Big\}\bigg| p^{K-\ell}
	= \bP^{\otimes 2}\Big( |\bS\cap \bS'|=\ell\Big) Mp^{K-\ell}
	\,.
	\eeq
In the above we have used that if we condition on $\bS$ and consider $S'$ with $|\bS\cap S'|=\ell$, then the probability $\BG S'$ is contained in $\bY=\bS\cup\bV$ is the same as the probability that $S'\setminus \bS$ is contained in $\bV$, which is $p^{K-\ell}$.
It follows that \eqref{e:mmse.ubd.condition} implies \eqref{e:truncated.first.mmt}{\BG, which proves the claim.}
\end{proof}

\begin{proof}[Proof of Theorem~\ref{t:bern.aon.linear} ``all'' result]
By Lemma~\ref{l:mmse.ubd}, it suffices to check that  the condition~\eqref{e:mmse.ubd.condition} holds for
any $p \le (1-\epsilon)\pmmt$, and for
 any $\delta>0$. We first use the assumption on $p$ to bound
	\beq\label{e:all.bound.B}
	Mp^{K-\ell}
	\le M (\pmmt)^{K-\ell}(1-\epsilon)^{K-\ell}
	\stackrel{\eqref{def:p1M}}{=}
	M\bigg(\frac{1}{M}\bigg)^{(K-\ell)/K}
	(1-\epsilon)^{K-\ell}
	= M^{\ell/K}(1-\epsilon)^{K-\ell}\,.
	\eeq
It follows by combining with the growth condition \eqref{e:growth} that
for all $\ell\le K (1-\delta)$,
	\begin{align}\nonumber 
	&\bP^{\otimes 2}\Big( |\bS\cap \bS'|=\ell\Big) Mp^{K-\ell}
	\le
	\exp\bigg\{
	-\delta \epsilon K + \log \bigg[
	\bP^{\otimes 2}\Big( |\bS \cap \bS'|= \ell\Big) M^{\ell/K} \bigg]
	\bigg\}\\
	&\qquad\stackrel{\eqref{e:growth}}{\le} 
	\exp\Big\{ -\delta \epsilon K + o(K)\Big\}\,.
	\label{e:all.bound.C}
	\end{align}
Since $K\to\infty$, it follows by summing over $\ell \le K(1-\delta)$ that
condition~\eqref{e:mmse.ubd.condition} holds. The claim follows.
\end{proof}

\begin{proof}[Proof of Theorem~\ref{t:bern.aon.exp} ``all'' result]
Again, by Lemma~\ref{l:mmse.ubd}, it suffices to check that condition~\eqref{e:mmse.ubd.condition} holds 
for any $p \le (\pmmt)^{1+\epsilon}$, and 
for any $\delta>0$.
Using the assumption on $p$,
instead of \eqref{e:all.bound.B} we have
	\[
	Mp^{K-\ell}
	\le M(\pmmt)^{(K-\ell)(1+\epsilon)}
	\stackrel{\eqref{def:p1M}}{=}
	M\bigg(\frac{1}{M}\bigg)^{(1-\ell/K)(1+\epsilon)}
	= \frac{M^{\ell/K}}{M^{\epsilon(1-\ell/K)}}\,.
	\]
Combining with the growth condition \eqref{e:weak.growth}, instead of \eqref{e:all.bound.C} we have
	\[
	\bP^{\otimes 2}\Big( |\bS\cap \bS'|=\ell\Big) Mp^{K-\ell}
	\le 
	\frac{\exp(o(\log M))}{ M^{\epsilon(1-\ell/K)}}
	\le
	\frac{\exp(o(\log M))}{ M^{\epsilon\delta}}
	\]
for each $\ell\le K(1-\delta)$. Since we assumed $K\to\infty$ and $\BH K=O(\log M)$, we have $\log K\ll K \le O(\log M)$. It follows by summing over $\ell \le K(1-\delta)$ that
condition~\eqref{e:mmse.ubd.condition} holds, and this proves the claim.\end{proof}

\subsection{Proofs for ``nothing'' regime by truncated second moment in null model}\label{ss:nothing}
In this subsection we prove the negative (``nothing'') results of 
Theorems~\ref{t:bern.aon.linear} and \ref{t:bern.aon.exp}, showing that in both cases, if $p$ is sufficiently above $\pmmt$, then the scaled MMSE tends to one.

Recall that in the planted model $\P=\P_p$, there is a hidden signal $\bS\sim\bP$ and we observe $\bY=\bS\cup\bV$ where $\bV$ is an independent $p$-biased random set. We now compare this with the null model $\Q=\Q_p$, where there is no hidden signal and we observe $\bY=\bV$. Note that
	\begin{align}\nonumber
	\P(\bS=S,\bY=Y)
	&=\frac{\bP(S)\Ind{S\subseteq Y}\Q(\bY=Y)}{p^{|S|}}
	= \frac{\Ind{S\subseteq Y}\Q(\bY=Y)}{Mp^K}\\
	&= \frac{\Ind{S\subseteq Y}\Q(\bY=Y)}{\E_{\Q} \bZ(\bY)}\,,
	\label{e:P.vs.Q}
	\end{align}
with $\bZ(Y)$ as in \eqref{e:Z}. It follows that the marginal law of $\bY$ under the planted model is given by
	\beq\label{e:radon.nikodym}
	\P(\bY=Y)
	= \frac{\bZ(\bY)}{\E_{\Q} \bZ(\bY)}
	\Q(\bY=Y)\,.
	\eeq
That is, the Radon--Nikodym derivative between $\P$ and $\Q$ is given by the ratio of $\bZ(\bY)$ to $\E_{\Q} \bZ(\bY)$. An immediate consequence is that this ratio is unlikely to be small under $\P$:

\begin{lemma}\label{lem:planting_first}
For any $\epsilon>0$ we have
$\P( \bZ(\bY) \leq \epsilon \E_{\Q} \bZ(\bY) ) \leq \epsilon$.
\end{lemma}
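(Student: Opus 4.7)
The plan is to obtain this bound as an immediate consequence of the Radon--Nikodym derivative formula \eqref{e:radon.nikodym}, which expresses $d\P/d\Q$ explicitly as $\bZ(\bY)/\E_\Q \bZ(\bY)$. The key observation is that on the event $A = \{\bZ(\bY) \le \epsilon\, \E_\Q \bZ(\bY)\}$, the Radon--Nikodym derivative itself is pointwise bounded by $\epsilon$.

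More concretely, first I would write
\[
\P(A) = \sum_{Y \in A} \P(\bY = Y) \stackrel{\eqref{e:radon.nikodym}}{=} \sum_{Y \in A} \frac{\bZ(Y)}{\E_\Q \bZ(\bY)} \, \Q(\bY = Y).
\]
Next, I would use the definition of $A$ to bound the ratio $\bZ(Y)/\E_\Q \bZ(\bY)$ by $\epsilon$ uniformly for $Y \in A$, which yields
\[
\P(A) \le \epsilon \sum_{Y \in A} \Q(\bY = Y) = \epsilon\, \Q(A) \le \epsilon,
\]
giving the claim.

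This is essentially a one-line argument once \eqref{e:radon.nikodym} is in hand, so there is no real obstacle: the only thing to verify is that the derivation \eqref{e:radon.nikodym} in the preceding paragraph, which is stated for $\P(\bS = S, \bY = Y)$ and then marginalized over $S$, indeed holds for all $Y$ (with $\bZ(Y)=0$ giving $\P(\bY = Y) = 0$ consistently, since in the planted model $\bY$ must contain some $S \in \cS$). I would mention this marginalization explicitly so that the use of \eqref{e:radon.nikodym} is transparent. No additional tools (second moment, concentration, etc.) are needed; the statement is purely a change-of-measure / Markov-type bound.
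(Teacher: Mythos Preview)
Your proposal is correct and follows essentially the same approach as the paper: both use the Radon--Nikodym identity \eqref{e:radon.nikodym} to rewrite $\P(A)$ as a $\Q$-expectation of $\mathbf{1}_A\cdot\bZ(\bY)/\E_\Q\bZ(\bY)$, bound the ratio by $\epsilon$ on $A$, and then bound $\Q(A)\le 1$. The paper's proof is the same one-line change-of-measure argument you describe.
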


\begin{proof}
It follows from \eqref{e:radon.nikodym} that
	\[
	\P\Big( \bZ(\bY) \leq \epsilon \E_{\Q} \bZ(\bY) \Big)
	= \E_{\Q}\bigg[ 
	\mathbf{1}\Big\{\bZ(\bY) \leq \epsilon \E_{\Q} \bZ(\bY)\Big\}
	\frac{\bZ(\bY)}{\E_{\Q} \bZ(\bY)}
	\bigg]
	\le \epsilon
	\Q\Big( \bZ(\bY) \leq \epsilon \E_{\Q} \bZ(\bY) \Big)
	\le\epsilon\,,
	\]
as claimed.
\end{proof}

Recall from above that $\bZ_\ell(\bS,\bY)$ denotes the number of subsets $S'\in\cS$ that are contained in $\bY$ and have overlap $\ell$ with the hidden signal $\bS$. We also let $\bZ^2(\ell,\bY)$ denote the number of pairs $(S',S'')\in\cS^2$ with $|S'\cap S''|=\ell$ and $S'\subseteq\bY$, $S''\subseteq\bY$. The following is a consequence of \eqref{e:P.vs.Q}:

\begin{lemma}\label{lem:change_of_measure}
For any $0\le\ell\le K$, we have
\[\E_{\P} \bZ_\ell(\bS,\bY)
=\frac{\E_{\Q}[ \bZ^2(\ell,\bY) ]}{\E_{\Q}\bZ(\bY)}
\,.\]
\end{lemma}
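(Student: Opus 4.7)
The plan is to directly rewrite the left-hand expectation under $\P$ as a sum over pairs $(S,Y)$, apply the Radon--Nikodym identity \eqref{e:P.vs.Q} to convert it to an expectation under $\Q$, and then observe that summing $\bZ_\ell(S,\bY)$ over planted signals $S\subseteq\bY$ with $S\in\cS$ produces exactly $\bZ^2(\ell,\bY)$.

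More concretely, I would expand
\[
\E_{\P}\bZ_\ell(\bS,\bY)
= \sum_{S\in\cS}\sum_{Y} \P(\bS=S,\bY=Y)\,\bZ_\ell(S,Y),
\]
and then substitute the identity from \eqref{e:P.vs.Q} to obtain
\[
\E_{\P}\bZ_\ell(\bS,\bY)
= \frac{1}{\E_{\Q}\bZ(\bY)}
\sum_{Y}\Q(\bY=Y)\sum_{\substack{S\in\cS \\ S\subseteq Y}} \bZ_\ell(S,Y).
\]
At this point the key combinatorial observation is that
\[
\sum_{\substack{S\in\cS \\ S\subseteq Y}} \bZ_\ell(S,Y)
=\Big|\big\{(S,S')\in\cS^2 : S\subseteq Y,\ S'\subseteq Y,\ |S\cap S'|=\ell\big\}\Big|
= \bZ^2(\ell,Y),
\]
which is immediate from the definitions of $\bZ_\ell$ and $\bZ^2$. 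Plugging this in gives the claimed equality.

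There is no real obstacle here; the lemma is essentially a direct consequence of the change-of-measure formula \eqref{e:radon.nikodym} combined with a trivial double-counting of ordered pairs. The only thing to keep in mind is to distinguish the role of the planted signal $\bS$ inside $\bZ_\ell(\bS,\bY)$ (a random element of $\cS$) from the summation variable $S'$ that ranges over $\cS\cap 2^{\bY}$; under $\P$ the signal $\bS$ itself is one of the elements counted by $\bZ^2(\ell,\bY)$ under $\Q$, and the $1/\E_{\Q}\bZ(\bY)$ normalization in \eqref{e:radon.nikodym} is precisely what accounts for it.
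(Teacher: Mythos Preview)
Your proof is correct and follows essentially the same route as the paper: expand the $\P$-expectation via \eqref{e:P.vs.Q}, then recognize that the inner sum $\sum_{S\in\cS,\,S\subseteq Y}\bZ_\ell(S,Y)$ equals $\bZ^2(\ell,Y)$ by double-counting ordered pairs.
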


\begin{proof}
It follows from \eqref{e:P.vs.Q} that
	\begin{align*}
	\E_{\P} \bZ_\ell(\bS,\bY)
	&= 
	\sum_{S,Y}
	\frac{\Ind{S\subseteq Y}
	\Q(\bY=Y)}{\E_{\Q} \bZ(\bY)}
	\bZ_\ell(S,Y)\\
	&=\sum_Y
	\frac{\Q(\bY=Y)}{\E_{\Q} \bZ(\bY)}
	\sum_S \Ind{S\subseteq Y}
	\sum_{S'}\Ind{S'\subseteq Y}
	\Ind{|S\cap S'|=\ell}
	= \frac{\E_{\Q}[ \bZ^2(\ell,\bY) ]}{\E_{\Q}\bZ(\bY)}\,,
	\end{align*}
as claimed.
\end{proof}

%
%
%

The next result gives a counterpart to Lemma~\ref{l:mmse.ubd}:

\begin{lemma}[MMSE lower bound]
\label{l:mmse.lbd}
If the prior $\bP$ satisfies 
	\beq\label{e:mmse.lbd.condition}
	\sum_{\ell \ge K \delta}
	\frac{\bP^{\otimes 2}(|\bS \cap \bS'| = \ell)}{p^\ell} =o_N(1)\,,
	\eeq
then $\mmse_N(p)/K \ge 1- \delta - o_N(1)$.
\end{lemma}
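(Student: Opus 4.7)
The plan is to show that the posterior correlation $(\theta^*,\Btheta)$ is at most $(\delta+o(1))K$ with high probability, which by \eqref{e:mmse.bern} immediately gives the desired lower bound on the normalized MMSE. The key observation is that, given $\bY$, the posterior law of $\theta^*$ is uniform over the set $\{S'\in\cS:S'\subseteq\bY\}$, which has size $\bZ(\bY)$. Consequently,
\[
(\theta^*,\Btheta) = \sum_{i\in \bS} \P(\theta^*_i=1\mid\bY)
= \frac{1}{\bZ(\bY)}\sum_{S'\subseteq\bY} |\bS\cap S'|
= \frac{1}{\bZ(\bY)}\sum_{\ell=0}^{K} \ell \cdot \bZ_\ell(\bS,\bY).
\]
Splitting the sum over $\ell$ at $K\delta$ and bounding the small-$\ell$ part trivially by $K\delta\cdot\bZ(\bY)$, we reduce the problem to showing that the high-overlap count is negligible relative to $\bZ(\bY)$, i.e.\ that
\[
\frac{\sum_{\ell\ge K\delta}\bZ_\ell(\bS,\bY)}{\bZ(\bY)} = o_{\mathbb{P}}(1)
\quad\text{under } \P_p.
\]

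To control the denominator, I will invoke Lemma~\ref{lem:planting_first} to conclude that for any fixed $\eta>0$, $\bZ(\bY)\ge \eta\,\E_\Q \bZ(\bY)=\eta Mp^K$ with $\P_p$-probability at least $1-\eta$. To control the numerator, I will use the first-moment identity \eqref{e:all.bound.A}, giving
\[
\E_{\P_p}\!\left[\sum_{\ell\ge K\delta} \bZ_\ell(\bS,\bY)\right]
= Mp^K\sum_{\ell\ge K\delta} \frac{\bP^{\otimes 2}(|\bS\cap\bS'|=\ell)}{p^\ell},
\]
which is $o(Mp^K)$ by the hypothesis \eqref{e:mmse.lbd.condition}. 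Markov's inequality then yields $\sum_{\ell\ge K\delta}\bZ_\ell(\bS,\bY) \le \eta^2 Mp^K$ with $\P_p$-probability $1-o_N(1)$, and combining with the lower bound on $\bZ(\bY)$ shows the ratio above is at most $\eta$ on an event of probability $1-\eta-o_N(1)$.

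Putting the pieces together, with probability $1-\eta-o_N(1)$ under $\P_p$,
\[
(\theta^*,\Btheta) \le K\delta + K\cdot \eta,
\]
and trivially $(\theta^*,\Btheta)\le K$ on the complementary event. Taking expectations and then sending $\eta\to 0$ shows $\E_{\P_p}(\theta^*,\Btheta)/K \le \delta + o_N(1)$, which by \eqref{e:mmse.bern} translates to $\mmse_N(p)/K \ge 1-\delta - o_N(1)$. There is no genuinely hard step here; the only subtle point is the need to control $\bZ(\bY)$ in the denominator before doing the first-moment truncation on the numerator, which is exactly what Lemma~\ref{lem:planting_first} (itself a clean consequence of the planting change-of-measure \eqref{e:radon.nikodym}) is designed to provide.
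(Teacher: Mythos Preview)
Your proposal is correct and follows essentially the same approach as the paper: both arguments control the denominator $\bZ(\bY)$ via Lemma~\ref{lem:planting_first}, bound the numerator $\sum_{\ell\ge K\delta}\bZ_\ell(\bS,\bY)$ in $\P_p$-expectation, and reduce to the hypothesis~\eqref{e:mmse.lbd.condition}. The only cosmetic difference is that the paper phrases the argument through a posterior sample $\bS'$ (so that $(\theta^*,\Btheta)=\E[|\bS\cap\bS'|\mid\bS,\bY]$) and passes through Lemma~\ref{lem:change_of_measure} to the $\Q$-second moment, whereas you expand $(\theta^*,\Btheta)$ directly and invoke~\eqref{e:all.bound.A}; these two computations are equivalent and yield the same quantity $\bP^{\otimes 2}(|\bS\cap\bS'|=\ell)/p^\ell$.
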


\begin{proof}
Let $\bS'$ denote a sample from the posterior distribution of $\bS$ given $\bY$. We will show that, for $\P=\P_p$, we have the bound
	\[
	\P \bigg(\frac{|\bS\cap\bS'|}{K} \ge\delta\bigg) = o_N(1)\,.
	\]
Since the Bayes estimator $\Btheta$ is the expectation of $\theta'=\mathbf{1}_{\bS'}$ given $\bY$, it follows that
	\[\frac{\mmse}{K}
	\stackrel{\eqref{e:mmse}}{=} 
	\frac{1}{K} \E_{\P_p} \bigg[\|\theta^*\|^2 - 
		\Big(\theta^* ,\Btheta\Big)\bigg]
	\ge
	1-\delta + o_N(1)\,.
	\]
Now note that we can apply Lemmas~\ref{lem:planting_first} and \ref{lem:change_of_measure}
to bound, for any $\epsilon>0$,
	\begin{align*}
	&\P \bigg(\frac{|\bS\cap\bS'|}{K} \ge\delta\bigg)
	= \E_{\P} \bigg[\frac{1}{\bZ(\bY)}
	\sum_{\ell\ge K\delta} \bZ_\ell(\bS,\bY)\bigg]
	\le
	\epsilon 
	+\frac1{\epsilon\E_{\Q}\bZ(\bY)}
	\E_{\P} \bigg[
	\sum_{\ell\ge K\delta} \bZ_\ell(\bS,\bY)\bigg] \\
	&\qquad =\epsilon 
	+\frac1{\epsilon (\E_{\Q}\bZ(\bY))^2}
	\E_{\Q} \bigg[
	\sum_{\ell\ge K\delta} \bZ^2(\ell,\bY)\bigg]
	\equiv \epsilon +
	 \frac{p(\delta)}{\epsilon}
	\end{align*}
Since $\epsilon$ is arbitrary, it suffices to show
$p(\delta)=o_N(1)$. Then note that 
	\[p_\ell(\delta)\equiv
  \frac{\E_{\Q} \bZ^2(\ell,\bY)}{(\E_{\Q}\bZ(\bY))^2}
  \BM = \EH 
  \frac{ M^2\bP^{\otimes 2}(|\bS \cap \bS'| = \ell)p^{2K-\ell}}{(Mp^K)^2} 
 = \frac{\bP^{\otimes 2}(|\bS \cap \bS'| = \ell)}{p^\ell}\,.\]
Summing over $\ell\ge K\delta$ gives the left-hand side of \eqref{e:mmse.lbd.condition}, and proves the claim.
\end{proof}

\begin{proof}[\hypertarget{thm:nothing:linear}{Proof of Theorem~\ref{t:bern.aon.linear} ``nothing'' result}]
By Lemma~\ref{l:mmse.lbd}, it suffices
to check condition~\eqref{e:mmse.lbd.condition}
for $p \ge (1+\epsilon)\pmmt$ and any $\delta>0$.
We combine the assumption on $p$ with
condition~\eqref{e:growth} to bound, for 
$\ell\ge K\delta$,
\begin{align*} 
p_\ell(\delta)
&\le
\frac{\bP^{\otimes 2}(|\bS \cap \bS'| = \ell)}{(1+\epsilon)^\ell (\pmmt)^\ell} 
\stackrel{\eqref{def:p1M}}{=}
\frac{\bP^{\otimes 2}(|\bS \cap \bS'| = \ell) M^{\ell/K}}{(1+\epsilon)^\ell} 
\\
& \le \exp\bigg\{
\log\Big[\bP^{\otimes 2}(|\bS \cap \bS'| = \ell) M^{\ell/K}\Big]
-K\delta{\BG \log(1+\epsilon)}\bigg\} 
\stackrel{\eqref{e:growth}}{\le} \exp\Big\{ o(K)-\delta K {\BG \log(1+\epsilon)}\Big\}\,.
\end{align*} Since $K\rightarrow \infty$, we can sum over $\ell\ge K\delta$ to conclude that $p(\delta)=o_N(1)$, which concludes the proof.
\end{proof}

\begin{proof}[Proof of Theorem~\ref{t:bern.aon.exp} ``nothing'' result]
Again, Lemma~\ref{l:mmse.lbd}, it suffices
to check condition~\eqref{e:mmse.lbd.condition}
for $p \ge (\pmmt)^{1-\epsilon}$ and any $\delta>0$.
We combine the assumption on $p$ with
condition~\eqref{e:weak.growth} to bound, for 
$\ell\ge K\delta$,
\begin{align*} 
p_\ell(\delta)
&\le
\frac{\bP^{\otimes 2}(|\bS \cap \bS'| = \ell)}{ (\pmmt)^{\ell(1-\epsilon)}} 
 \stackrel{\eqref{def:p1M}}{=}
\frac{\bP^{\otimes 2}(|\bS \cap \bS'| = \ell)M^{\ell/K}
}{ M^{\ell\epsilon/K} } 
\\
& \leq \exp\bigg\{ \log \Big[
\bP^{\otimes 2}(|\bS \cap \bS'| = \ell)M^{\ell/K}\Big]
- \delta\epsilon \log M\bigg\} 
\stackrel{\eqref{e:weak.growth}}{\le} \exp\Big\{ o(\log M)-\epsilon \delta \log M\Big\}
\end{align*}
Since $\log M \rightarrow \infty$ and $\BH \log K=o( \log M)$,
we can sum over $\ell\ge K\delta$ to conclude that $p(\delta)=o_N(1)$, which concludes the proof.
\end{proof}

\newcommand{\pp}{\bar{p}}

\subsection{Connections to spread: second Kahn-Kalai conjecture in inference}\label{ss:spread} 


\BM The main result of this subsection is Theorem~\ref{t:nothing.spread.subgraph}, which can be thought of as a version of the second Kahn--Kalai in the inference setting. \EH

\begin{definition}[spread condition]\label{d:spread}
Let $\bP$ be a probability measure over a family $\cS$ of subsets of $[N]$. For $\pp<1$, we say that $\bP$ is \emph{\BM $\pp$-spread} if for any subset $A \subseteq [N]$ we have
\begin{align}\label{e:spread} 
\bP(A \subseteq \bS) \le \pp^{|A|}\,,
\end{align}
where $\bS$ is a random sample from $\bP$. Note that it suffices to check the condition for all $A\ne\varnothing$.
 \end{definition}Intuitively the growth condition \eqref{e:growth} ask for the prior $\bP$ to satisfy an approximate version of the spread condition, a celebrated condition in probabilistic combinatorics \cite{sunflower_annals,MR4298747,Rao_sunflower,Tao_sunflower,bell2021note,Hu_sunflower,stoeckl_sunflower,mossel2022second_spread}. One can wonder if the well-studied spread condition by itself also implies a version of the all-or-nothing phenomenon. In this work, we establish that any prior satisfying the following relaxation of the spread condition does satisfy ``nothing'' for a wide range of value of $p$.

\begin{definition}[\BH generalized \EH spread condition]\label{d:delta-spread}
Let $\bP$ be a probability measure over a family $\cS$ of $\BM K$-subsets of $[N]$. For $\delta>0$, we say that $\bP$ is \emph{\BM $(\pp,\delta)$-spread} if we have
\beq\label{e:delta-spread} 
\bP(A \subseteq \bS) \leq \pp^{|A|}
\eeq
for any $A\subseteq[N]$ satisfying $|A|\ge K\delta$.
\end{definition}

We have the following theorem which establishes a ``nothing'' regime for the Bernoulli inference model in terms of the generalized spread condition:

\begin{theorem}[``nothing'' under the generalized spread condition]
\label{t:nothing.spread.bern}
In the Bernoulli inference model (Definition~\ref{d:bern.inference}),
suppose for every $\delta>0$ that $\bP$ 
is $(\delta,\pp)$-spread for some $\pp=\pp_N(\delta)$.
We then have
    \[
    \lim_{N \to \infty}  \frac{\mmse_N(p)}{K} 
    = 1\]
for any $p$ satisfying \BM $p \ge 3\pp_N(\delta) /\delta$ for all $\delta>0$. \EH
\end{theorem}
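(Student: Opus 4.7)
The plan is to apply Lemma~\ref{l:mmse.lbd}, which reduces the claim $\mmse_N(p)/K\to 1$ to verifying that for every fixed $\delta>0$,
\[
\sum_{\ell\ge K\delta}\frac{\bP^{\otimes 2}(|\bS\cap\bS'|=\ell)}{p^\ell}=o_N(1).
\]
Given such a bound for each $\delta>0$, the lemma yields $\mmse_N(p)/K\ge 1-\delta-o_N(1)$, and the conclusion follows by letting $\delta\downarrow 0$ after $N\to\infty$.

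First I would use the generalized spread assumption to control the numerator. Conditioning on $\bS'$, the event $\{|\bS\cap\bS'|\ge\ell\}$ forces some $\ell$-subset $A\subseteq\bS'$ to satisfy $A\subseteq\bS$; a union bound over the $\binom{K}{\ell}$ such $A$, combined with the $(\pp_N(\delta),\delta)$-spread bound (applicable because $|A|=\ell\ge K\delta$), gives
\[
\bP^{\otimes 2}(|\bS\cap\bS'|=\ell)\le\bP^{\otimes 2}(|\bS\cap\bS'|\ge\ell)\le\binom{K}{\ell}\pp_N(\delta)^\ell.
\]

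Next, inserting this bound into the target sum and invoking both the hypothesis $p\ge 3\pp_N(\delta)/\delta$ and the standard estimate $\binom{K}{\ell}\le(eK/\ell)^\ell$, I would obtain for $\ell\ge K\delta$
\[
\frac{\bP^{\otimes 2}(|\bS\cap\bS'|=\ell)}{p^\ell}\le\left(\frac{eK\pp_N(\delta)}{\ell p}\right)^\ell\le\left(\frac{e\pp_N(\delta)}{\delta p}\right)^\ell\le\left(\frac{e}{3}\right)^\ell,
\]
using $K/\ell\le 1/\delta$ in the middle step. Summing the geometric tail from $\ell=\lceil K\delta\rceil$ upward yields $O((e/3)^{K\delta})=o_N(1)$ under the standing assumption $K\to\infty$. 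The main conceptual point (and only real obstacle) is to line up the restricted range of the generalized spread bound, which controls $\bP(A\subseteq\bS)$ only for $|A|\ge K\delta$, with the range $\ell\ge K\delta$ appearing in the truncated second moment of Lemma~\ref{l:mmse.lbd}; these ranges coincide by design, since the same $\delta$ parametrizes both the spread condition and the MMSE lower bound, and the constant $3$ in the hypothesis is chosen precisely so that $e/3<1$ delivers geometric decay — any constant strictly larger than $e$ would suffice.
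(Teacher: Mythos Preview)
Your proposal is correct and follows essentially the same argument as the paper: invoke Lemma~\ref{l:mmse.lbd}, bound $\bP^{\otimes 2}(|\bS\cap\bS'|=\ell)$ via a union bound over $\ell$-subsets using the $(\pp,\delta)$-spread condition, apply $\binom{K}{\ell}\le(eK/\ell)^\ell$, and sum the resulting geometric series $(e/3)^\ell$. Your presentation is in fact slightly cleaner in two places: you correctly write the union bound as an inequality (the paper writes it as an equality, which is a minor slip), and you make explicit the use of $K\to\infty$ to kill the geometric tail, which the paper leaves implicit.
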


\begin{proof}
By Lemma~\ref{l:mmse.lbd}, it suffices
to check condition~\eqref{e:mmse.lbd.condition}
for any constant $\delta>0$. Since $\bP$ is supported on $K$-subsets, for all $\ell\ge K\delta$ we have
    \begin{align*}
    &\bP^{\otimes 2}\Big(|\bS \cap \bS'|=\ell\Big)
    = \mathbf{E}\bigg[
    \bP\Big(|\bS \cap \bS'|=\ell \,\Big|\, \bS\Big)
    \bigg]
    = \mathbf{E}\bigg[
    \sum_{\substack{A\subseteq \bS,
        |A|=\ell}}
        \bP(A\subseteq \bS')\bigg]\\
    &\qquad\stackrel{\eqref{e:delta-spread}}{\le}
    \binom{K}{\ell} \pp^\ell
    \le \bigg(\frac{Ke \pp}{\ell} \bigg)^\ell
    \le \bigg(\frac{e \pp}{\delta} \bigg)^\ell\,.
    \end{align*}
Therefore condition~\eqref{e:mmse.lbd.condition} holds for 
$p\ge 3\pp/\delta$. This completes the proof.
\end{proof}

\BM The following gives an inference version of the second Kahn--Kalai conjecture, which was mentioned in the discussion of \S\ref{ss:beyond}: \EH

\begin{theorem}[``nothing'' for arbitrary graphs]\label{t:nothing.spread.subgraph}
Let $H=H_n$ be an arbitrary graph. The model of $H_n$ planted in $G(n,p)$ is in the ``nothing'' regime when
$p=p_n$ satisfies \BM 
    \[
    \liminf_{q\downarrow0}
    \liminf_{n\to\infty}
    \frac{p_n}{\PSI_q(H) / q}
    \ge3\,,
    \]
for $\PSI_q$ as defined by \eqref{e:psi.q}. \EH
\end{theorem}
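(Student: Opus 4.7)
The plan is to deduce Theorem~\ref{t:nothing.spread.subgraph} from Theorem~\ref{t:nothing.spread.bern} by recognizing the planted subgraph model as an instance of the Bernoulli inference model (Definition~\ref{d:bern.inference}) with $[N]=E(K_n)$, $K=e(H)$, and $\cS$ equal to the family of edge-sets of copies of $H$ in $K_n$, endowed with the uniform prior $\bP$. It then suffices to verify the generalized spread condition (Definition~\ref{d:delta-spread}) for $\bP$ with a spread parameter controlled by $\PSI_q(H)$.

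The key combinatorial step is the identity
\[
\bP(A\subseteq\bS) \;=\; \frac{N_J(H)}{M_J}\,,
\]
valid for every nonempty $A\subseteq E(K_n)$, where $J$ denotes the abstract graph whose edge set is $A$ (on vertex set $V(A)$), $N_J(H)$ counts the subgraphs of $H$ isomorphic to $J$, and $M_J=M_{J,K_n}$. I would derive this by double-counting copies of $H$ in $K_n$ that contain $A$: every such copy is parameterized by a sub-embedding $\sigma\colon J\hookrightarrow H$ together with an injective extension of $\sigma^{-1}$ from $V(H)\setminus V(J)$ into $[n]\setminus V(A)$. Dividing the resulting count $\Phi(J,H)\cdot(n-v(J))_{v(H)-v(J)}/|\Aut H|$ by $M_H$ and simplifying gives the stated formula.

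I would then apply two elementary bounds: for $q:=|A|/e(H)$, the first-moment-threshold inequality $\pmmt(J)\le\PSI_q(H)$ (which holds by the very definition of $\PSI_q$) gives $1/M_J=\pmmt(J)^{|A|}\le\PSI_q(H)^{|A|}$, while $N_J(H)\le\binom{e(H)}{|A|}\le(e/q)^{|A|}$ by the standard binomial estimate. Combining yields
\[
\bP(A\subseteq\bS)\;\le\;\bigg(\frac{e\,\PSI_q(H)}{q}\bigg)^{|A|}\,,
\]
and since $q\mapsto\PSI_q(H)/q$ is non-increasing in $q\in(0,1]$, taking the supremum over $|A|\ge\delta e(H)$ establishes a $(\pp(\delta),\delta)$-spread condition with $\pp(\delta)\asymp\PSI_\delta(H)/\delta$. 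Invoking Theorem~\ref{t:nothing.spread.bern} then gives the ``nothing'' conclusion as soon as $p_n\gtrsim\PSI_q(H)/q$ in the $q\downarrow 0$ sense demanded by the hypothesis; the constant $3$ appearing in the statement traces back to the constant $3$ in Theorem~\ref{t:nothing.spread.bern}, and the residual $e/\delta$ slack inherited from the binomial bound is absorbed by the outer $\liminf_{q\downarrow 0}$ in the hypothesis.

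The main obstacle is the combinatorial identity and the careful pairing of the bound on $N_J(H)$ with the first-moment bound via $\PSI_q(H)$; once the spread estimate above is in place, the rest of the argument is essentially mechanical in view of Theorem~\ref{t:nothing.spread.bern}.
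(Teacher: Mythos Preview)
Your proposal is correct and follows essentially the same route as the paper. Both arguments reduce to Theorem~\ref{t:nothing.spread.bern} by verifying the generalized spread condition via the identity $\bP(J_0\subseteq\bH)=M_{J,H}/M_J$ (the paper obtains it ``by symmetry,'' you by the equivalent double-counting), then bound $M_{J,H}\le\binom{|H|}{|J|}\le(e/q)^{|J|}$ and $1/M_J=\pmmt(J)^{|J|}\le\PSI_q(H)^{|J|}$ to obtain $\bP(J_0\subseteq\bH)\le(e\PSI_q(H)/q)^{|J|}$; your explicit monotonicity observation for $q\mapsto\PSI_q(H)/q$ is a minor addition but not a departure.
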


\begin{proof}
Given Theorem~\ref{t:nothing.spread.bern}, it suffices to check that $\bP$, the uniform measure over copies of $H$ in $K_n$, satisfies the generalized spread condition with the appropriate parameters. For any nonempty subgraph $J\subseteq H$, let $\pi_J$ denote the uniform measure over copies of $J$ in $K_n$, and let $\bm{J}$ denote a sample from $\pi_J$. Let $J_0,H_0$ be arbitrary fixed copies of $J,H$ in $K_n$. Let $M_{J,H}$ denote the number of copies of $J$ in $H$. It follows by symmetry that    
\[\bP(J_0\subseteq\bH)
=\pi_J(\bm{J}\subseteq H_0)
= \frac{M_{J,H}}{M_J}
\le \frac{1}{M_J}\binom{|H|}{|J|}
\le \frac{1}{M_J} \bigg(\frac{|H|e}{|J|}\bigg)^{|J|}
\stackrel{\eqref{e:graph.first.mmt.threshold}}{=}
\bigg(\frac{|H|e \pmmt(J)}{|J|}\bigg)^{|J|}\,.
\]
If $|J|\ge |K|q$
and $\BM p \ge e\pmmt(J)/q$ then we obtain
    \[
    \bP(J_0\subseteq\bH)
    \le 
    \bigg(\frac{e \pmmt(J)}{q}\bigg)^{|J|}
    \le p^{|J|}\,,
    \]
which is the generalized spread condition. The claim follows.\end{proof}

\section{Characterization of AoN at linear scale for sufficiently dense graphs}
\label{s:AoN.dense}

The main purpose of the current section is to prove Theorem~\ref{t:dense.linear.aon}, which characterizes AoN at linear scale for dense graphs under mild conditions. The main message of Theorem~\ref{t:dense.linear.aon} is that for dense graphs $H$, AoN at linear scale can occur if and only if the graph is almost balanced, i.e., it is almost the case that $H$ is the densest subgraph of the full graph. With this in mind, the current section is organized as follows:
\begin{itemize}
\item In \S\ref{ss:aon.lin.forward} we prove the forward direction of Theorem~\ref{t:dense.linear.aon} by showing that for dense graphs that are delocalized (Definition~\ref{d:delocalized}), AoN implies the almost-balanced condition~\eqref{e:almost.bal}.

\item In \S\ref{ss:aon.lin.reverse} we prove the reverse direction of Theorem~\ref{t:dense.linear.aon} by showing that for delocalized dense graphs, condition~\eqref{e:almost.bal} implies AoN.

\item In \S\ref{ss:sparse} we prove Theorem~\ref{t:aon.sparse} which
gives sufficient conditions for AoN at linear scale for graphs
without a density restriction, but only provided the number of vertices and edges is very small. The result illustrates that we have limited understanding of AoN for sparse planted subgraphs, and we leave this as an interesting direction for future research.
\end{itemize}

We begin with some preliminaries.
Throughout the remainder of this paper, we specialize from the abstract Bernoulli inference model (Definition~\ref{d:bern.inference}) to
the planted subgraph model
(Definition~\ref{d:planted.subgr}). Recall that the explicit correspondence between Definitions \ref{d:planted.subgr} and \ref{d:bern.inference} is as follows: $[N]$ is the set of all edges in the complete graph $K_n$, $\cS$ is the set of all copies of $H$ insides $K_n$, and $M=|\cS|$ is the total number of distinct copies of $H$ in $K_n$:
	\beq\label{e:M.H}
	M \equiv M_H \equiv M_{H,K_n}
	= \frac{n! / (n-v(H))!}{|\Aut(H)|} = \frac{(n)_{v(H)}}{|\Aut(H)|}
	= \binom{n}{v(H)} \Psi_H
	\,,
	\eeq
where $\Aut(H)$ denotes the automorphism group of $H$,
and $\Psi_H$ denotes the number of distinct copies of $H$ inside the complete graph on $v(H)$ vertices. We have the trivial bounds
	\beq\label{e:trivial.bounds.on.M}
	\binom{n}{v(H)}
	\le M_H \le (n)_{v(H)} \le n^{v(H)}\,.
	\eeq
Recall that $\bP$ denotes the uniform probability measure over $\cS$. Under the planted model $\P_p$ we observed $\bY=\bH\cup\bG$ where $\bH\sim\bP$ and $\bG$ is an independent sample from $G(n,p)$. The first moment threshold $\pmmt(H)$ is defined by \eqref{e:graph.first.mmt.threshold}. We now proceed with some of the definitions that were informally given in the introduction:

\begin{definition}[sufficiently dense]\label{d:dense}
We say that a graph sequence $H=(H_n)_{n\ge1}$ is \emph{sufficently dense}, or simply \emph{dense}, if
	\[
	\lim_{n\to\infty}v(H_n)
	=\infty
	=\lim_{n\to\infty} \frac{e(H_n)}{v(H_n)\log v(H_n)}\,,
	\]
where $v(H)$ denotes the number of vertices in $H$,
and $|H|\equiv e(H)$ denotes the number of edges in $H$.
\end{definition}


For dense graphs, $\pmmt(H)$ and $\PSI_q(H)$ are easily computable by the following lemma, the proof of which was already sketched in the introduction (see \eqref{e:dense.threshold}):

\begin{lemma}\label{l:dense.threshold}
Let $H_n$ be dense (cf. Definition~\ref{d:dense}). Then, we have
	\[
	\pmmt(H_n) = \frac{1+o_n(1)}{n^{v(H)/e(H)}}\,.
	\]
Thus, if we let
	\beq\label{e:alpha.q}
	\alpha_q 
	\equiv \alpha_q(H)
	\equiv \inf\bigg\{
	\frac{v(J)}{e(J)}
	: J\subseteq H
	\textup{ with }
	|J| \ge\max\Big\{1, |H| q\Big\}
	\bigg\}\,,
	\eeq
then for any dense graph $H_n$, we have $\PSI_q (H)=(1+o_n(1))n^{-\alpha_q(H)}$ for $0<q\leq 1$. Moreover, for an arbitrary graph $H$, we have the lower bound $\PSI_q(H)\geq n^{-\alpha_q(H)}$ for $q\in [0,1]$.
\end{lemma}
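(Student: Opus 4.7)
The plan is a direct computation from the two-sided bound $\binom{n}{v(J)} \leq M_J \leq n^{v(J)}$ in \eqref{e:trivial.bounds.on.M}, together with the elementary inequality $\binom{n}{v(J)} \geq (n/v(J))^{v(J)}$ (valid for $1 \leq v(J) \leq n$ by a term-by-term comparison of $\log(1-j/n)$ with $\log(1-j/v(J))$). Raising these to the power $-1/e(J)$ sandwiches the first moment threshold for every nonempty subgraph $J$ of $H$:
\[
n^{-v(J)/e(J)} \;\leq\; \pmmt(J) \;\leq\; n^{-v(J)/e(J)} \cdot v(J)^{v(J)/e(J)}.
\]
The unconditional lower bound $\PSI_q(H) \geq n^{-\alpha_q(H)}$ for every $q \in [0,1]$ follows immediately from the left inequality by taking the supremum over admissible $J$ (those with $|J| \geq \max\{1, q|H|\}$).

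The nontrivial content is the matching asymptotic upper bound for dense $H_n$, which reduces to showing that the correction factor $v(J)^{v(J)/e(J)} = \exp\!\bigl(v(J)\log v(J)/e(J)\bigr)$ equals $1+o(1)$ \emph{uniformly} over the family we maximize over. Specializing to $J=H$ gives $\pmmt(H_n) = (1+o(1))\,n^{-v(H)/e(H)}$ directly from the denseness hypothesis $v(H)\log v(H)/e(H) \to 0$. For general subgraphs $J$ with $|J| \geq q|H|$ and $q \in (0,1]$ fixed, I would combine $e(J) \geq q\,e(H)$ with the monotonicity of $x \mapsto x\log x$ on $[1,\infty)$ (so that $v(J)\log v(J) \leq v(H)\log v(H)$, using $v(J) \leq v(H)$) to obtain the uniform estimate
\[
\frac{v(J)\log v(J)}{e(J)} \;\leq\; \frac{v(H)\log v(H)}{q \cdot e(H)} \;=\; o(1).
\]
This uniformity is what allows the $(1+o(1))$ factor to commute with the supremum, yielding $\PSI_q(H)=(1+o(1))\,n^{-\alpha_q(H)}$ for every $q\in(0,1]$.

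There is no serious obstacle; the only delicate point is the uniformity of the $o(1)$ across the admissible family, which is exactly why the asymptotic conclusion for $\PSI_q(H)$ is stated only for $q > 0$. With $q=0$ the family can contain a single-edge subgraph for which $v(J)^{v(J)/e(J)} = 4$ is bounded away from $1$, so the approximation degrades, while the trivial lower bound $\PSI_0(H) \geq n^{-\alpha_0(H)}$ still goes through unchanged.
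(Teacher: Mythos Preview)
Your proposal is correct and follows essentially the same approach as the paper: both arguments sandwich $\pmmt(J)$ between $n^{-v(J)/e(J)}$ and a multiplicative correction of size $\exp\bigl(O(v(J)\log v(J)/e(J))\bigr)$, which is $1+o(1)$ under the density hypothesis. The paper does this via $M_J=(n)_{v(J)}/|\Aut J|$ together with $|\Aut J|\le v(J)!$, whereas you use the cruder bound $M_J\ge\binom{n}{v(J)}\ge(n/v(J))^{v(J)}$; these are equivalent up to constants in the exponent. Where your write-up actually improves on the paper is the $\PSI_q$ claim: the paper's proof establishes the asymptotic for $\pmmt(H)$ only and then writes ``Thus'' for $\PSI_q$, leaving implicit the uniformity over all admissible subgraphs $J$. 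You make that step explicit with the clean inequality $v(J)\log v(J)/e(J)\le v(H)\log v(H)/(q\,e(H))$, and you correctly identify why the asymptotic is stated only for $q>0$.
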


\begin{proof}
It follows from the definitions that
	\[
	\frac{1}{\pmmt(H)}
	= (M_H)^{1/e(H)}
	=\bigg( \frac{(n)_{v(H)}}{|\Aut H|}\bigg)^{1/e(H)}\,.
	\]
We then note that the dense assumption implies
	\[
	1 \le |\Aut H|^{1/e(H)}
	\le (v(H)!)^{1/e(H)}
	= \exp \bigg\{O\bigg( \frac{v(H)\log v(H)}{e(H)}\bigg)\bigg\}
	= e^{o_n(1)}\,.
	\]
Similarly, it also implies
	\[
	\frac{( (n)_{v(H)})^{1/e(H)}}{  n^{v(H)/e(H)}}
	=
	\exp\bigg\{ O \bigg( \frac{v(H)^2}{n e(H)} \bigg)\bigg\}
	= e^{o_n(1)}\,.
	\]
For the final claim regarding the one-sided bound of $\psi_q(H)$, note that for any graph $J$,
\begin{equation*}
    \pmmt(J)=\bigg( \frac{(n)_{v(J)}}{|\Aut J|}\bigg)^{-1/e(J)}\geq \Big( (n)_{v(J)}\Big)^{-1/e(J)}\geq n^{-v(J)/e(J)}.
\end{equation*}
The claim follows.
\end{proof}

The following condition was introduced in \eqref{e:intro.delocalized} and used in the statement of Theorem~\ref{t:dense.linear.aon}:

\begin{definition}[delocalized]\label{d:delocalized}
Let $H=(H_n)_{n\ge1}$ be a sequence of graphs that are dense. We say the sequence is \emph{delocalized} if
there exist $0<q\leq 1 $ and $C<\infty$, which are independent of $n$, such that
	\beq\label{e:delocalized}
	\alpha_q(H)\leq \alpha_0(H)+\frac{C}{\log n}.
	\eeq
We remark that if $H=(H_n)_{n\geq 1}$ is dense and satisfies $\PSI_q(H)\geq c\cdot \PSI_0(H)$ for some $0<q\leq 1$ and $c>0$, then $H$ is delocalized: by Lemma~\ref{l:dense.threshold}, we have
    \[
    \frac{1+o_n(1)}{n^{\alpha_q(H)}}
    = \PSI_q(H)
    \ge c \cdot \PSI_0(H)
    \ge \frac{1}{n^{\alpha_0(H)}}\,,
    \]
and rearranging gives condition \eqref{e:delocalized}. This fact was used in some of the examples presented in Section~\ref{ss:main.results}.
\end{definition}

\subsection{Theorem~\ref{t:dense.linear.aon} forward direction: AoN implies almost-balanced} 
\label{ss:aon.lin.forward}

We now turn to the proof of Theorem~\ref{t:dense.linear.aon}. 

\begin{lemma}\label{lem:count:bound}
Let $\bP_1$ be the uniform measure on copies of $H$ in $K_n$, and let $\bP_2$ be the uniform measure on copies of $H'$ in $K_n$. Suppose $v(H) \le v(H')$. If $(\bH^1,\bH^2)$ denotes a sample from $\bP_1\otimes\bP_2$, then
	\[
	(\bP_1\otimes\bP_2)(|\bH^1\cap\bH^2|=\ell)
	\le \frac{2^{v(H')}
	v(H)^{O(v(H))}}{ n^{v_\ell(H,H')}}
	\,,
	\]
where $v_\ell(H,H')$ denotes the minimum number of vertices in a graph of $\ell$ edges that can arise as an intersection of a copy of $H$ with a copy of $H'$.
\end{lemma}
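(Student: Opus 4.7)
The plan is to reduce the edge-intersection event to a vertex-overlap event, which then admits a direct counting argument. The defining property of $v_\ell(H,H')$ is that any graph with $\ell$ edges arising as an intersection of a copy of $H$ with a copy of $H'$ has at least $v_\ell$ non-isolated vertices. So, if $|\bH^1\cap\bH^2|=\ell$, then the vertex-intersection $W:=V(\bH^1)\cap V(\bH^2)$ contains all endpoints of these $\ell$ shared edges, which forces $|W|\ge v_\ell$. Therefore, it suffices to upper-bound $(\bP_1\otimes\bP_2)(|W|\ge v_\ell)$. This reduction cleanly separates the role of graph isomorphism from the ensuing purely combinatorial estimate.

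By the symmetry of $\bP_2$, I fix an arbitrary copy $H_0^2$ of $H'$ in $K_n$ and parametrize copies of $H$ by injections $\phi:V(H)\hookrightarrow V(K_n)$, of which there are $(n)_{v(H)}$ in total. For each $w\in[v_\ell,v(H)]$, I bound the number of injections $\phi$ with $|\phi(V(H))\cap V(H_0^2)|=w$ by the product $\binom{v(H')}{w}\cdot (v(H))_w\cdot (n-v(H'))_{v(H)-w}$, obtained by choosing in succession (i) the image $W\subseteq V(H_0^2)$, (ii) the preimage $A\subseteq V(H)$ together with the bijection $\phi|_A:A\to W$, and (iii) the injective extension $V(H)\setminus A\to V(K_n)\setminus V(H_0^2)$. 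The elementary estimates $\binom{v(H')}{w}\le 2^{v(H')}$ and $(v(H))_w\le v(H)^w$ then dominate the first two factors.

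To finish I divide by $(n)_{v(H)}=(n)_w(n-w)_{v(H)-w}$ and invoke the hypothesis $v(H)\le v(H')$, which gives $w\le v(H')$ and hence $(n-v(H'))_{v(H)-w}\le (n-w)_{v(H)-w}$; the two ``tail'' factorials cancel, leaving each summand bounded by $2^{v(H')}\,v(H)^w/(n)_w\le 2^{v(H')}(2v(H)/n)^w$ in the regime $v(H)\le n/2$ (the complementary regime makes the stated bound trivially true, since then $v(H)^{O(v(H))}/n^{v_\ell}\ge 1$ for any sufficiently large absolute constant in $O(\cdot)$). Summing the resulting geometric series over $w\ge v_\ell$ yields
\[
(\bP_1\otimes\bP_2)\bigl(|\bH^1\cap\bH^2|=\ell\bigr)\;\le\; O(1)\cdot 2^{v(H')}\Bigl(\tfrac{2v(H)}{n}\Bigr)^{v_\ell}\;\le\;\frac{2^{v(H')}\,v(H)^{O(v(H))}}{n^{v_\ell}},
\]
as required. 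The entire argument is routine; the only point of substance is the vertex-overlap reduction in the first paragraph, and the cancellation of the falling factorials in the last step is where the hypothesis $v(H)\le v(H')$ is used.
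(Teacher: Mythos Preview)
Your proof is correct and follows essentially the same route as the paper's. Both fix a copy of $H'$, reduce the edge-overlap event $\{|\bH^1\cap\bH^2|=\ell\}$ to the vertex-overlap event $\{|V(\bH^1)\cap V(\bH^2)|\ge v_\ell\}$, and then count placements of $H$ with prescribed vertex overlap $w$; the paper parametrizes by (vertex set, labeling) pairs using $M_H=\binom{n}{v(H)}\Psi_H$, while you parametrize by injections $\phi:V(H)\hookrightarrow V(K_n)$, and the two counts are algebraically identical. One small imprecision: your ``geometric series'' bound $O(1)\cdot(2v(H)/n)^{v_\ell}$ is only literally valid when $2v(H)/n$ is bounded away from $1$ (say $v(H)\le n/4$); in the range $n/4< v(H)\le n/2$ the sum picks up an extra factor of $v(H)$ rather than $O(1)$, but this is harmlessly absorbed into the $v(H)^{O(v(H))}$ in the final bound, so the conclusion stands.
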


\begin{proof}
It follows by a direct counting argument that
	\[
	(\bP_1\otimes\bP_2)(|\bH^1\cap\bH^2|=\ell)
	\le \frac1{M_H}
	\sum_{w=v_\ell(H,H')}^{v(H)}
	\binom{v(H')}{w}
	\binom{n-v(H')}{v(H)-w} \Psi_H\,,
	\]
 We can simplify the above bound as
	\begin{align*}
	&(\bP_1\otimes\bP_2)(|\bH^1\cap\bH^2|=\ell)
	\le
	\binom{n}{v(H)}^{-1}
	\sum_{w=v_\ell(H,H')}^{v(H)}
	\binom{v(H')}{w}
	\frac{(n-v(H'))_{v(H)-w}}{(v(H)-w)!}\\
	&\qquad\le 2^{{\BG v(H')}}
	\sum_{w=v_\ell(H,H')}^{v(H)}
	\frac{(v(H))!}{(v(H)-w)!}
	\frac{n^{v(H)-w}}{ (n)_{v(H)} }
	\le 
	\frac{2^{v(H')}
	v(H)^{O(v(H))}}{ n^{v_\ell(H,H')}}\,,
	\end{align*}
as claimed.\end{proof}

The next two lemmas give sufficient conditions for bounding the normalized MMSE from below (Lemma \ref{l:not.all}), and from above (Lemma \ref{l:not.nothing}). 

\begin{lemma}\label{l:not.all}
Suppose $H=H_n$ is dense in the sense of Definition~\ref{d:dense}.
Fix $q\in(0,1]$. We then have 
	\[\frac{\mmse_n(p_n)}{K}
	\ge 1- \BH q - o_n(1).\]
as long as $p \ge \BH (1+\epsilon)/n^{\alpha_q(H)}$ 
for $\alpha_q(H)$ as defined by \eqref{e:alpha.q}.
\end{lemma}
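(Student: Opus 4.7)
The plan is to invoke Lemma~\ref{l:mmse.lbd} with $\delta = q$, which reduces the problem to verifying
\[
\sum_{\ell \ge qe(H)} \frac{\bP^{\otimes 2}\big(|\bH^1 \cap \bH^2| = \ell\big)}{p^\ell} \;=\; o_n(1),
\]
where $\bH^1,\bH^2$ are two independent samples from the uniform measure $\bP$ over copies of $H$ in $K_n$. Since $K = e(H)$ in the planted subgraph setting, this will give $\mmse_n(p)/K \ge 1 - q - o_n(1)$ as claimed.

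For the overlap probability, I would apply Lemma~\ref{lem:count:bound} with $H' = H$ to get
\[
\bP^{\otimes 2}\big(|\bH^1 \cap \bH^2| = \ell\big) \;\le\; \frac{2^{v(H)} v(H)^{O(v(H))}}{n^{v_\ell(H,H)}}.
\]
Any intersection of two copies of $H$ is isomorphic to a subgraph of $H$, so $v_\ell(H,H)$ equals the minimum of $v(J)$ over subgraphs $J \subseteq H$ with $e(J) = \ell$. When $\ell \ge qe(H)$, every such $J$ lies in the feasible set of the infimum \eqref{e:alpha.q} defining $\alpha_q(H)$, so $v(J)/e(J) \ge \alpha_q(H)$, giving $v_\ell(H,H) \ge \alpha_q(H)\,\ell$.

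Plugging in the hypothesis $p \ge (1+\epsilon)/n^{\alpha_q(H)}$, the $n^{-v_\ell(H,H)}$ factor cancels against $p^\ell$ up to the constant $(1+\epsilon)^\ell$, yielding the per-$\ell$ bound
\[
\frac{\bP^{\otimes 2}\big(|\bH^1 \cap \bH^2| = \ell\big)}{p^\ell} \;\le\; \frac{2^{v(H)} v(H)^{O(v(H))}}{(1+\epsilon)^\ell}.
\]
Summing over at most $e(H)+1$ values $\ell \in \{\lceil qe(H)\rceil,\ldots,e(H)\}$ and using the density hypothesis (Definition~\ref{d:dense}) $e(H) \gg v(H)\log v(H)$, which implies $2^{v(H)} v(H)^{O(v(H))} = e^{o(e(H))}$, while $(1+\epsilon)^{qe(H)} = e^{\Theta(e(H))}$ with $q,\epsilon > 0$ constants, the sum is $e^{-\Theta(e(H))} = o_n(1)$.

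The main obstacle is really conceptual rather than technical: one must correctly identify $v_\ell(H,H)$ with the subgraph-vertex-count problem and then recognize that the relevant infimum is exactly $\alpha_q(H)$ once $\ell \ge qe(H)$. Everything else is a clean first-moment/overlap computation in which the density condition is used only once, to absorb combinatorial factors of size $e^{o(e(H))}$ coming from automorphisms and from the number of terms.
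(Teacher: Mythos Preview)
Your proof is correct and follows essentially the same route as the paper: reduce to condition~\eqref{e:mmse.lbd.condition} via Lemma~\ref{l:mmse.lbd}, bound the overlap probability by Lemma~\ref{lem:count:bound}, use $v_\ell(H,H)\ge \alpha_q(H)\,\ell$ for $\ell\ge q\,e(H)$, and absorb the combinatorial prefactor using the density assumption. One tiny imprecision: the intersection of two copies of $H$ is always a subgraph of $H$, giving $v_\ell(H,H)\ge \min\{v(J): J\subseteq H,\ e(J)=\ell\}$, which is the inequality you actually use---the claimed \emph{equality} need not hold, but is irrelevant to the argument.
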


\begin{proof}
By Lemma~\ref{l:mmse.lbd}, it suffices to check condition~\eqref{e:mmse.lbd.condition} with $\delta=q$. By Lemma~\ref{lem:count:bound},
	\[
	\bP^{\otimes 2}(|\bH \cap \bH'| =\ell)
	\le
	\frac{ v(H)^{O(v(H))} }{n^{v_\ell(H)} }\]
where $v_\ell(H)\equiv v_\ell(H,H)$.
It follows from the definition~\eqref{e:alpha.q} that 
	\[
	\frac{v_\ell(H)}{\ell} \ge \alpha_q(H)\,.
	\]
Consequently, as long as $p \ge (1+\epsilon)/n^{\alpha_q(H)}$, we have
	\[\frac{\bP^{\otimes 2}(|\bH \cap \bH'| =\ell)}{p^\ell}
	\le
	\frac{ v(H)^{O(v(H))} }{ ( 1+\epsilon )^\ell }\,.
	\]
Summing over $\ell\ge Kq$ and recalling the dense assumption gives
\eqref{e:mmse.lbd.condition}.
\end{proof}

\begin{lemma}\label{l:not.nothing}
Suppose $H=H_n$ is dense in the sense of Definition~\ref{d:dense}.
With the notation \eqref{e:alpha.q}, suppose that
	\[
	\alpha_q(H) \le \alpha_0(H) + \frac{c}{\log n}
	\]
for some $q\in(0,1)$, where $c$ is a finite constant. Then for small $\eta>0$ we have 
	\[\frac{\mmse_n(p_{n_k})}{K}
	\le 1- \BH \eta - o_n(1)\]
as long as $p\le (1- 2 c \eta /q \EH ) n^{-\alpha_q(H)}$.
\end{lemma}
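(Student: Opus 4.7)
The plan is to reduce the question to partial recovery of a pivotal subgraph $J^*\subseteq H$ and then apply a first-moment (truncated) argument in the spirit of Lemma~\ref{l:mmse.ubd}. By the hypothesis and the definition~\eqref{e:alpha.q}, we may choose $J^*\subseteq H$ that attains the infimum in $\alpha_q(H)$, so $e(J^*)\ge qK$ and $v(J^*)/e(J^*)=\alpha_q(H)\le \alpha_0(H)+c/\log n$. Since $H$ is dense and $e(J^*)=\Theta(e(H))$, the graph $J^*$ is itself dense, and Lemma~\ref{l:dense.threshold} applied to $J^*$ gives $\pmmt(J^*)=(1+o_n(1))\,n^{-\alpha_q(H)}$, which matches the threshold appearing in our hypothesis on $p$ up to a $(1-2c\eta/q)$ factor.

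The first step will be to establish the elementary reduction
\[
\frac{\mmse_H(p)}{K}\le 1-\frac{|J^*|}{K}\Big(1-\frac{\mmse_{J^*}(p)}{|J^*|}\Big),
\]
where $\mmse_{J^*}(p)$ denotes the MMSE for recovering the planted copy $\bJ^*\subseteq\bH$ from $\bY$ under $\P_p$. This follows from the pointwise Bayes inequality $\P(e\in\bJ^*\mid \bY)\le \P(e\in\bH\mid \bY)$ for each $e\in\bJ^*$ (which holds because $\bJ^*\subseteq\bH$), combined with the trivial $\P(e\in\bH\mid\bY)\ge 0$ for $e\in\bH\setminus\bJ^*$. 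Since $|J^*|/K\ge q$, it then suffices to show that $\mmse_{J^*}(p)/|J^*|\le 1-\eta/q+o(1)$.

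The second step will be to apply Lemma~\ref{l:mmse.ubd} to the derived $J^*$-inference problem with $\delta=1-\eta/q$. The key bound to verify is
\[
\sum_{\ell\le (\eta/q)|J^*|}\bP^{\otimes 2}_{J^*}\big(|\bJ^*\cap \bJ^{*\prime}|=\ell\big)\,M_{J^*}\,p^{|J^*|-\ell}=o(1),
\]
which we will control by invoking Lemma~\ref{lem:count:bound} and Lemma~\ref{l:dense.threshold} applied to $J^*$. Crucially, $v(J^*)=|J^*|\alpha_q(H)$ holds by construction, and $v_\ell(J^*,J^*)\ge \ell\,\alpha_0(J^*)\ge \ell(\alpha_q-c/\log n)$ by the delocalized hypothesis combined with $\alpha_0(J^*)\ge \alpha_0(H)$. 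Together these cause the $n$-exponent in each term to telescope down to $\ell\cdot c/\log n$, producing a factor $e^{\ell c}\le e^{c\eta|J^*|/q}$. Combining this with the decay $(1-2c\eta/q)^{|J^*|-\ell}\le e^{-2c\eta(|J^*|-\ell)/q}$ coming from the hypothesis on $p$, and with the subexponential automorphism factor $v(J^*)^{O(v(J^*))}=e^{o(|J^*|)}$ from the density of $J^*$, the total sum is bounded by $O(|J^*|)\cdot e^{o(|J^*|)-c\eta|J^*|/q}$, which is $o(1)$ for any fixed $\eta>0$.

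The main obstacle I expect is in the reduction step: if $H$ contains several automorphic copies of $J^*$, the choice of the specific subgraph $\bJ^*$ inside $\bH$ is not canonical, so the derived inference problem must be set up with care (e.g., by working with the joint posterior over pairs $(\bH,\bJ^*)$ or by quotienting by $\Aut(H)$). I expect the resulting multiplicity factor to again be of subexponential order $v(J^*)^{O(v(J^*))}=e^{o(|J^*|)}$ and hence absorbed by the density-driven slack. A secondary subtlety, which I expect to be routine, is that the effective noise for the $J^*$-problem is $(\bH\setminus \bJ^*)\cup\bV$ rather than $G(n,p)$; but since $(K-|J^*|)/\binom{n}{2}=o(p)$ in the dense regime, the added edges perturb the relevant first-moment bounds only by a $1+o(1)$ factor.
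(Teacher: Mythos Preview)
Your overall strategy is exactly right and matches the paper's: pick the subgraph $J^*$ attaining $\alpha_q(H)$, and show by a truncated first-moment argument that any copy of $J^*$ inside $\bY$ must overlap substantially with the planted structure. The calculational ingredients you identify (Lemma~\ref{lem:count:bound}, the bound $v_\ell/\ell\ge\alpha_0$, the telescoping of the $n$-exponent to $e^{\ell c}$, and the density-driven absorption of $v(H)^{O(v(H))}$) are also the right ones.

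The genuine gap is in your ``secondary subtlety.'' Your reduction step forces you to count copies $J'$ of $J^*$ in $\bY$ with small overlap with the chosen sub-copy $\bJ^*\subseteq\bH$, and the noise for this count is $(\bH\setminus\bJ^*)\cup\bV$. You argue that because $(K-|J^*|)/\binom{n}{2}=o(p)$, the extra edges perturb the first moment by $1+o(1)$. This is false: the extra edges are \emph{structured}, and can contain entire copies of $J^*$ that are nearly disjoint from $\bJ^*$. Concretely, take $H$ to be two disjoint $k$-cliques with $k\to\infty$ and $J^*$ one of them; then $\bH\setminus\bJ^*$ is itself a copy of $J^*$ with zero edge-overlap with $\bJ^*$, so the count you want to show is $o(1)$ is in fact at least $1$ deterministically---even though your density condition $(K-|J^*|)/\binom{n}{2}=o(p)$ holds. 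Your formula $\bP_{J^*}^{\otimes 2}(|\bJ^*\cap\bJ^{*\prime}|=\ell)\,M_{J^*}\,p^{|J^*|-\ell}$ is the first moment for the \emph{wrong} model (noise $=\bV$ only).

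The fix is to drop the $\bJ^*$-reduction entirely and measure overlap of copies $J'$ of $J^*$ directly with the full planted graph $\bH$. Then the noise is exactly $\bV\sim G(n,p)$, and the first moment is cleanly
\[
\E_{\P}\bar{\bZ}_\ell(\bH,\bY)=(\bP\otimes\bar{\bP})\big(|\bH\cap\bJ|=\ell\big)\,M_{J^*}\,p^{|J^*|-\ell},
\]
bounded via Lemma~\ref{lem:count:bound} using $v_\ell(J^*,H)/\ell\ge\alpha_0(H)$. Once you show this sum over $\ell\le K\eta$ is $o(1)$, you conclude that every copy $H'\subseteq\bY$ of $H$ (which necessarily contains some copy of $J^*$) satisfies $|H'\cap\bH|\ge K\eta$, giving $\mmse/K\le 1-\eta+o(1)$ directly---no MMSE reduction, no $\bJ^*$, no structured-noise issue. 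This is precisely what the paper does, and your computation of the exponent goes through essentially unchanged under this reformulation.
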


\begin{proof}
Suppose for some $q\in(0,1)$ we have
	\[
	\alpha_q(H) = \alpha_0(H) + \frac{\BH c_q}{\log n}
	\]
for some $q\in(0,1)$, where $\BH c_q$ may depend on $n$ but stays bounded by $c$ in the limit $n\to\infty$.
Let $J\subseteq H$ be a subgraph with $L\equiv e(J)\ge Kq$
and $v(J)/e(J)=\alpha_q(H)$. Let $\mathcal{S}$ denote the set of all copies of $H$ in $K_n$. As in the proof of Lemma~\ref{l:mmse.ubd}, it suffices to show that, with high probability, any element $H'\in\mathcal{S}$ satisfying $H'\subseteq \bY=\bH\cup\bG$ has overlap at least $K\eta$ with the planted subgraph $\bH$. To this end, let $\mathcal{J}$ denote the set of all copies of $J$ in $K_n$; it then suffices to show that any element $J'\in\mathcal{J}$ satisfying $J'\subseteq\bY$ has overlap at least $K\eta$ with $\bH$. Similarly as in Lemma~\ref{l:mmse.ubd}, let 
$\bar{\bZ}_\ell(\bS,\bY)$ denote the number of elements $J'\in\mathcal{J}$ that are contained in $\bY$ and have overlap $\ell$ with $\bH$; we want to show that with high probability this quantity is zero for all $\ell\le K\eta$. Similarly to \eqref{e:all.bound.A}, we have
    \[
    q_\ell\equiv 
    \E_{\P} \bar{\bZ}_\ell(\bS,\bY)
    = (\bP\otimes \bar{\bP}) (|\bH\cap\bJ | =\ell )
    M_J p^{L-\ell}\,,
    \]
where $\bar{\bP}$ is the uniform measure over $\mathcal{J}$, and
$M_J\equiv |\mathcal{J}|$. Applying Lemma~\ref{lem:count:bound} gives \EH 
    \[
    q_\ell
    \le 
    \frac{(v_H)^{O(v_H)}}{n^{v_\ell(J,H)}}
    M_J p^{L-\ell}
    \le 
    (v_H)^{O(v_H)}
    \frac{n^{v_J-v_\ell(J,H)}}{(1/p)^{L-\ell}}\,.
    \]
It follows from the definition \eqref{e:alpha.q} that
$v_\ell(J,H)/\ell \ge \alpha_0(H)=\alpha_0$,
and $v(J)/L =\alpha_q(H)=\alpha_q$. Combining with the assumption on $\alpha$ gives \EH 
	\[q_\ell
	\le (v_H)^{O(v_H)} 
		\frac{n^{L\alpha_q-\ell\alpha_0}}{(1/p)^{L-\ell}}
	\le (v_H)^{O(v_H)}
	e^{Lc_q}
	\bigg(\frac{n^{\alpha_0} }{1/p}\bigg)^{L-\ell} \,.
	\]
Let $\epsilon>0$ be an arbitrary constant for now, and set
	\[
	p = \frac{1-\epsilon}{n^{\alpha_0} \exp(c_q/(1-\eta 
	    /q \EH ))}
	\ge \frac{(1-\epsilon - (3/2)c_q\eta /q \EH )}
	    {n^{\alpha_q}}\,,
	\]
where the last estimate holds for $\eta$ small enough. For $\ell\le K\eta \le L\eta/q$, we have
    \[
    q_\ell
    \le\frac{ (v_H)^{O(v_H)}
    (1-\epsilon)^{L-\ell} e^{Lc_q}}
    {\exp( (L-\ell) c_q/(1-\eta /q))}
    \le \frac{\exp(o(K))}{\exp( L\epsilon(1-\eta/q))}\,,
    \]
where the last bound uses the dense assumption. Summing over $\ell \le K\eta$ proves
    \[
    \E_{\P} \sum_{\ell\le K\eta}
    \bar{\bZ}_\ell(\bS,\bY)
    =\sum_{\ell\le K\eta} q_\ell 
    = o_n(1)\,,
    \]
which implies the desired bound on MMSE. The claim follows by taking $\epsilon=c_q\eta/(2q)$.
\end{proof}

\begin{proof}[Proof of Theorem~\ref{t:dense.linear.aon} forward direction]
Let $\alpha_q\equiv\alpha_q(H)$ as in \eqref{e:alpha.q}.
Lemma~\ref{l:not.all} tells us that
\BH ``all'' cannot hold for \EH
	\[
	p \BH \ge \EH \frac{1+o_n(1)}{n^{\alpha_q}}
	\]
for any $q\in(0,1)$; since we assume AoN, it means that ``nothing'' holds. On the other hand, using the delocalized assumption (Definition~\ref{d:delocalized}), Lemma~\ref{l:not.nothing} tells us that \BH ``nothing'' cannot hold for \EH
	\[
	p \BH \le \EH \frac{1-o_n(1)}{n^{\alpha_{q'}}}
	\]
for sufficiently small $q'>0$; again, since we assume AoN, it means that ``all'' holds. If $q' \le q$ then $\alpha_{q'} \le \alpha_q$. We therefore obtain a contradiction unless
	\[
	\alpha_q - \alpha_{q'}
	= o\bigg(\frac{1}{\log n}\bigg)
	\]
for all $q,q'\in(0,1)$, and $\pAoN=(1+o_n(1)) /n^{\alpha_q}$ for all $q\in(0,1)$.
\end{proof}

\subsection{Theorem~\ref{t:dense.linear.aon} reverse direction: almost-balanced implies AoN} 
\label{ss:aon.lin.reverse}

The following is a strengthening of Lemma~\ref{l:not.nothing} under the assumption~\eqref{e:almost.bal}.

\begin{lemma}\label{lem:dense:all}
Suppose $H=(H_n)_{n\ge1}$ is delocalized in the sense of Definition~\ref{d:delocalized}. If condition~\eqref{e:almost.bal} holds, then
	\[\lim_{n\to\infty} \mmse_n(p) = 0\]
for all $p\le (1-\epsilon)/n^{\alpha_q}$, for any $q\in(0,1)$.
\end{lemma}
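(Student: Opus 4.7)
We verify the hypothesis of Lemma~\ref{l:mmse.ubd} for every fixed $\delta>0$: that is, we show
\[
\sum_{\ell \le (1-\delta)K}\bP^{\otimes 2}\bigl(|\bH\cap\bH'|=\ell\bigr)\,Mp^{K-\ell}=o_n(1),
\]
so that Lemma~\ref{l:mmse.ubd} yields $\mmse_n(p)/K\le\delta+o_n(1)$; letting $\delta\downarrow 0$ along a countable sequence then gives $\mmse_n(p)/K\to 0$. A preliminary observation is that the almost-balanced hypothesis~\eqref{e:almost.bal} together with Lemma~\ref{l:dense.threshold} ensures $n^{\alpha_{q'}-\alpha_{q''}}\to 1$ for all $q',q''\in(0,1)$, so the assumption $p\le(1-\epsilon)/n^{\alpha_q}$ is equivalent up to a $(1+o_n(1))$-factor to the same bound with $\alpha_q$ replaced by any $\alpha_{q'}$ with $q'\in(0,1)$; I plan to use this flexibility freely when optimizing the termwise bound.

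\textbf{Per-term estimate.} By Lemma~\ref{lem:count:bound} applied with $H'=H$, combined with the dense assumption absorbing $2^{v(H)}v(H)^{O(v(H))}$ into $e^{o(K)}$, and with $M=e^{o(K)}n^{\alpha_1 K}$ from Lemma~\ref{l:dense.threshold}, each summand is at most
\[
e^{o(K)}\,(1-\epsilon)^{K-\ell}\,n^{\alpha_1 K-\alpha_q(K-\ell)-v_\ell(H)},
\]
where $v_\ell(H)\ge\alpha_{\ell/K}\,\ell$ by the definition~\eqref{e:alpha.q}. I split the sum at $\ell_0=K\delta/2$. For $\ell\le\ell_0$, the factor $(1-\epsilon)^{K-\ell}=e^{-\Omega(K)}$ dominates, once one uses the crude bound $v_\ell\ge\alpha_0\ell$ together with $\alpha_q-\alpha_0=O(1/\log n)$ (a consequence of delocalization and almost-balanced). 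This contribution is $o(1)$.

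\textbf{Bulk regime and main obstacle.} The harder range is $\ell\in[\ell_0,(1-\delta)K]$. Here $v_\ell(H)\ge\alpha_{\ell/K}\ell$ combined with $\alpha_{\ell/K}=\alpha_q+o(1/\log n)$ (by almost-balanced, since $\ell/K$ is bounded away from $0$ and $1$) reduces the exponent to $(\alpha_1-\alpha_q)K+o(K/\log n)$. The delicate point is that under almost-balanced and delocalization alone, $\alpha_1-\alpha_q$ can be as large as $\Theta(1/\log n)$ (as in Example~\ref{example:aon:diff:expect}), making $n^{(\alpha_1-\alpha_q)K}$ potentially $e^{\Theta(K)}$, which is not killed by $(1-\epsilon)^{K-\ell}$ alone. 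I expect this to be the main obstacle. The likely resolution is a sharper counting argument in the spirit of the proof of Lemma~\ref{l:not.nothing}: instead of applying Markov's inequality directly to copies of $H$ via the uniform pair bound of Lemma~\ref{lem:count:bound}, one picks a subgraph $J\subseteq H$ with $|J|/K$ as close to $1$ as needed and $v(J)/|J|=\alpha_q$, and directly estimates the expected number of copies of $J$ in $\bY$ with small overlap with the planted $\bJ$. The almost-balanced hypothesis should force every copy of $J$ in $\bY$ to share all but $o(|J|)$ edges with $\bJ$; since $J$ covers an $1-o(1)$ fraction of the edges of $H$, every copy of $H$ in $\bY$ must then overlap $\bH$ on all but $o(K)$ edges, giving the required $o(1)$ tail bound and completing the verification of the Lemma~\ref{l:mmse.ubd} condition.
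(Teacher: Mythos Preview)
Your diagnosis is correct, and your proposed resolution is exactly what the paper does. The initial attempt to apply Lemma~\ref{l:mmse.ubd} directly to copies of $H$ fails for precisely the reason you identify: $\alpha_1-\alpha_q$ is not controlled by the almost-balanced hypothesis (which only concerns $q\in(0,1)$), and in examples like the clique-with-out-edges one has $n^{(\alpha_1-\alpha_q)K}=e^{\Theta(K)}$. Your pivot to a subgraph $J$ is the paper's argument: it takes $J\subseteq H$ to be the densest subgraph with $|J|\ge K(1-\delta/2)$, so that $v(J)/|J|=\alpha_{1-\delta/2}$ (not literally $\alpha_q$, but equal up to $o(1/\log n)$ by almost-balanced), and then shows that every copy of $J$ in $\bY$ must overlap the planted $\bH$ (not a specific $\bJ$) in at least $K(1-\delta)$ edges.

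Two points to make precise when you carry this out. First, the two-regime split you described for the $H$-argument must be redone for the $J$-argument, and the splitting parameter matters: the paper splits at $\ell=L\eta$ with $\eta$ chosen proportional to $\epsilon$ (specifically $\eta=\epsilon/(2c)$ where $c$ is the delocalization constant), so that in the small-$\ell$ regime the factor $e^{c L\eta}$ coming from $n^{(\alpha_q-\alpha_0)\ell}\le e^{c\ell}$ is beaten by $(1-\epsilon)^{L-\ell}$. Your choice $\ell_0=K\delta/2$ does not tie the split to $\epsilon$ and $c$, and would not close. Second, in the bulk regime $L\eta\le\ell\le L(1-\delta/2)$ one uses $v_\ell(J,H)/\ell\ge\alpha_\eta(H)=\alpha_q+o(1/\log n)$ (almost-balanced applies since $\eta>0$ is fixed), and the remaining exponent collapses to $o(K)$; the decay then comes from $(1-\epsilon)^{L-\ell}\le e^{-L\delta\epsilon/2}$.
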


\begin{proof}
Fix small enough $\delta>0$. Write $K=|H|$, and let $J$ be the subgraph of $H$ which achieves maximum density among all the subgraphs of $H$ having at least $K(1-\delta/2)$ edges, and write $L=|J|$. Let $\mathcal{J}$ denote the set of all copies of $J$ in $K_n$.
Similarly to the proof of Lemma~\ref{l:not.nothing}, it suffices to show that, with high probability, any element $J'\in \mathcal{S}$ satisfying $J'\subseteq\bY=\bH\cup\bG$ has overlap at least $K(1-\delta)$ with the planted subgraph $\bH$. For this it suffices to show
    \beq\label{e:mmse.ubd.condition.repeated}
    \E_{\P} \sum_{\ell\le K(1-\delta)}
    \bar{\bZ}_\ell(\bS,\bY)
    =\sum_{\ell\le K(1-\delta)}
        q_\ell 
    = o_n(1)\,,
    \eeq
where, as in the proof of
Lemma~\ref{l:not.nothing}, we have 
    \[
    q_\ell
    \le (v_H)^{O(v_H)} 
    \frac{n^{v(J)-v_\ell(J,H)}}{(1/p)^{L-\ell}}\,.
    \]
We then divide the bound into two regimes:
\begin{enumerate}[(a)]
\item For $\ell\le L\eta$, we have $v(J)/L = \alpha\equiv \alpha_{1-\delta/2}(H)$ by assumption, while
	\[
	\frac{v_\ell(J,H)}{\ell}
	\ge \alpha_0(H) 
    \equiv \alpha_0 
    \ge {\alpha_{q'}} - \frac{c}{\log n}
    \stackrel{\eqref{e:almost.bal}}{=}
    \alpha_q - \frac{c + o_n(1)}{\log n}
	\]
where the intermediate bound follows from the delocalized assumption (Definition~\ref{d:delocalized}) for small enough $q'$, and the last equality holds for any $q'$ by \eqref{e:almost.bal}. Thus, for $\ell\le L\eta$ and $p \le (1-\epsilon)/n^{\alpha_q}$, we have
	\[
	q_\ell
	\le 
	(v_H)^{O(v_H)} e^{cL\eta}
	\bigg(\frac{n^\alpha}{1/p}\bigg)^{L-\ell}
	\le   \frac{
   (v_H)^{O(v_H)} 
    e^{cL\eta}}
	{\exp(L[(1-\eta)\epsilon + o_n(1)])}
	\]
It follows by taking $\eta=\epsilon/(2c)$ and summing over $\ell\le L\eta$
(using the dense assumption) that the contribution to \eqref{e:mmse.ubd.condition.repeated} from all such $\ell$ is $o_n(1)$.

\item For $L\eta = L\epsilon/(2c) \le\ell\le L(1-\delta/2)$, we have
	\[
	\frac{v_\ell(J)}{\ell} \ge \alpha_\eta(J)\equiv \alpha_\eta
	\stackrel{\eqref{e:almost.bal}}{=} 
 {\BRR\alpha_q}
 +o\bigg(\frac{1}{\log n}\bigg)\,.
	\]
It follows that
	\[q_\ell
	\le (v_H)^{O(v_H)}
	\frac{n^{L\alpha
    -\ell\alpha_\eta}}{(1/p)^{L-\ell}}
	\le e^{o(K)}
	\bigg(\frac{n^\alpha}
    {1/p}\bigg)^{L-\ell}
	\le \frac{\exp(o(K))}{\exp(L\delta\epsilon/2)}\,,
	\]
having used the dense assumption.
It follows by summing over $L\eta\le \ell \le L(1-\delta/2)$
that the contribution to \eqref{e:mmse.ubd.condition.repeated} 
from all such $\ell$ is $o_n(1)$.
\end{enumerate}
Combining the above bounds proves \eqref{e:mmse.ubd.condition.repeated}, and hence the claim.
\end{proof}

\begin{proof}[Proof of Theorem~\ref{t:dense.linear.aon} reverse direction]
Let $\alpha=\alpha_q$ for any fixed $q\in(0,1)$, and let $\epsilon>0$ be any positive constant.
Lemma~\ref{lem:dense:all} tells us that ``all'' occurs for $p \le (1-\epsilon)n^{-\alpha}$.
On the other hand, Lemma~\ref{l:not.all} tells us that ``nothing'' occurs for all $p \ge (1+\epsilon)n^{-\alpha}$. This proves the theorem.
\end{proof}

\subsection{Sparse strongly balanced graphs}
\label{ss:sparse}

Given the result of Theorem~\ref{t:dense.linear.aon}, it is natural to ask about AoN for general graphs without a density requirement as in Definition~\ref{d:dense}. In this subsection we show that AoN at linear holds for graphs that are \emph{strongly balanced} (see below) and have a sufficiently small number of vertices and edges, but with no restriction on the density. We do not have a complete understanding of the sparse case, and leave this as an interesting open question for future investigations.

\begin{definition}[strongly balanced]\label{d:strongly.balanced}
We say the graph $H$ is \emph{strongly balanced} with parameter $c>0$ if we have
	\beq\label{eq:strict:balanced}
        \frac{|J|}{v(J)-c}
        \le\frac{|H|}{v(H)-c}
	\eeq
for all nonempty subgraphs $J\subseteq H$. That is to say, subgraphs of $H_n$ must be \emph{strictly} less dense than $H_n$, with the difference given by \eqref{eq:strict:balanced}. The case $c=1$ corresponds to the definition of \cite{MR890231}.
\end{definition}

\begin{theorem}[AoN at linear scale for small strongly balanced graphs]\label{t:aon.sparse}
Suppose $H=H_n$ is strongly balanced (Definition~\ref{d:strongly.balanced}) with parameter $c>0$, and satisfies $v(H_n)\to\infty$, $|H_n|\to\infty$, and 
	\[
	v(H_n)+|H_n| 
	\le \frac{c\log n}{3\log\log n}\,.
	\]
Then the model of $H_n$ planted in $G(n,p)$ exhibits AoN at 
$\pAoN=\pmmt(H)$.
\end{theorem}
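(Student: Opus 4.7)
The plan is to apply Theorem~\ref{t:bern.aon.linear} to the planted subgraph model, with $\bP$ the uniform measure on copies of $H$ in $K_n$, so that $K = e(H)$, $M = M_H$, and the first moment threshold coincides with $\pmmt(H)$ as defined in~\eqref{e:graph.first.mmt.threshold}. Since $e(H)\to\infty$ by hypothesis, the task reduces to verifying the growth condition~\eqref{e:growth} for this prior, namely
\[
\sup_{0 \le \ell \le K} \frac{1}{K} \log\Bigl( \bP^{\otimes 2}(|\bH \cap \bH'| = \ell)\cdot M_H^{\ell/K} \Bigr) \le o(1)
\]
as $n \to \infty$, where $\bH, \bH'$ are independent uniform copies of $H$ in $K_n$.

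The first step is to bound $\bP^{\otimes 2}(|\bH \cap \bH'| = \ell)$ by counting pairs of copies of $H$ whose intersection is isomorphic to a given $J \subseteq H$ with $e(J)=\ell$. A natural starting point is the observation that $|\bH \cap \bH'| = \ell$ forces $|V(\bH) \cap V(\bH')| \ge v_\ell(H)$, where $v_\ell(H)$ is the minimum vertex count among subgraphs of $H$ with $\ell$ edges. Since $v(H) = o(\sqrt n)$ under the size hypothesis, the law of $|V(\bH)\cap V(\bH')|$ is close to a Poisson with mean $v(H)^2/n = o(1)$, so one obtains estimates of the shape $(v(H)^2/n)^{v_\ell(H)}/v_\ell(H)!$. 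These need refinement by conditioning on the vertex intersection and accounting for the probability that the induced subgraphs share $\ell$ specific edges, together with the automorphism factor $|\Aut H|^{-\ell/K}$ inherent in $M_H^{\ell/K}$.

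The second step is to invoke strong balancedness (Definition~\ref{d:strongly.balanced}), which yields the key inequality $v_\ell(H) \ge c + \ell(v(H) - c)/K$, and therefore $v(H)\ell/K - v_\ell(H) \le -c(K-\ell)/K$, producing the main decay factor $n^{-c(K-\ell)/K}$ in the ratio of interest. The third step is to absorb the remaining combinatorial factors using the size bound $v(H) + e(H) \le c\log n/(3\log\log n)$: since $\log v(H) \le \log\log n$, factors such as $v(H)^{v_\ell(H)}$ contribute at most $O(v(H)\log v(H)) \le (c/3)\log n$ in log scale. The constant $3$ in the hypothesis appears calibrated so that, when balanced against the negative exponent from strong balancedness, the $\log n$-terms cancel and the remaining $K$-scale contribution is $o(K)$.

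The main obstacle I expect is the regime where $\ell$ is close to $K$, where the negative exponent $-c(K-\ell)\log n/K$ becomes small but the combinatorial factors persist. Note that at $\ell = K$ the exact value $\bP^{\otimes 2}(\bH = \bH') = 1/M_H$ already matches $(\pmmt(H))^K$, but crude bounds via the vertex intersection overestimate it by a factor $\asymp v(H)!/|\Aut H|$, which is $\exp(\Theta(v(H)\log v(H)))$ in the asymmetric case. Closing this gap requires a sharper count that retains the factor $|\Aut H|^{-\ell/K}$ carried by $M_H^{\ell/K}$ and exploits the rigidity of near-coincident copies of $H$: strong balancedness forces such copies to nearly coincide as vertex sets, drastically restricting the number of admissible configurations and allowing the automorphism contribution to cancel the $v(H)^{v_\ell(H)}$ factor in the numerator.
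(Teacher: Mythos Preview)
Your overall strategy---reduce to Theorem~\ref{t:bern.aon.linear} and verify the growth condition~\eqref{e:growth}---matches the paper exactly, and your use of strong balancedness to extract the inequality $v_\ell(H)\ge c+\ell(v(H)-c)/K$ is also the same. The obstacle you flag at $\ell$ near $K$ is real: a crude vertex-intersection bound of the type coming from Lemma~\ref{lem:count:bound} carries a $v(H)^{O(v(H))}$ prefactor that is \emph{not} $\exp(o(K))$ for sparse $H$ (e.g.\ trees have $K\asymp v(H)$), so step three as written does not close.

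Where you diverge from the paper is in the proposed fix. You suggest recovering the missing factor by tracking $|\Aut H|^{-\ell/K}$ and exploiting ``rigidity'' of near-coincident copies; but for a generic asymmetric $H$ one has $|\Aut H|=1$, so there is nothing to cancel, and the rigidity argument is left unspecified. The paper instead avoids the whole issue by a different bookkeeping: rather than bounding $\bP^{\otimes2}(|\bH\cap\bH'|=\ell)$ and then multiplying by $M_H^{\ell/K}$, it writes the ratio directly as
\[
F_q \;\le\; \sum_{F\subseteq H\text{ vertex-induced}}\;\sum_{\substack{J\subseteq F,\ |J|=Kq\\V(J)=V(F)}}\frac{M_{H|J}}{(M_H)^{1-q}}\,,
\]
bounds $M_{H|J}\le\binom{n-v(H)}{v(H)-v(F)}\,v(H)^{2K(1-q)}$ and $(M_H)^{1-q}\ge(n-v(H))^{v(H)(1-q)}$, and uses strong balancedness a second time (to bound $|F|$ above in terms of $v(F)$) to control the inner enumeration. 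The resulting sum over $w=v(F)$ is then shown to be \emph{monotone decreasing} in $w$ under the size hypothesis, so only the minimal value $w=v(H)q+c(1-q)$ contributes, and the final bound collapses to
\[
F_q \;\le\; \Bigl(v(H)^{2(|H|+v(H))}\big/(n-v(H))^{c}\Bigr)^{1-q}\,.
\]
The crucial point is the exponent $1-q$: it makes the bound automatically equal to $1$ at $q=1$ and keeps the combinatorial prefactor harmless for $q<1$ via the rearranged size condition $2(v(H)+|H|)\log v(H)\le c\log n - c(\log n)/4$. This monotonicity-plus-factorization step is the missing idea in your outline; without it (or a genuine substitute), the argument does not go through for sparse $H$.
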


\begin{example}[small trees and small cycles]\label{x:small.tree.cycle}
One can easily check that trees and cycles are strongly balanced with parameter $c=1$; this fact is also noted by \cite{MR890231}. Theorem~\ref{t:aon.sparse} implies that if $H_n$ is a tree or cycle with
	\beq\label{e:tree.cycle.bound}
	v(H_n)
	\le \frac{\log n}{6\log\log n}\,,
	\eeq
then the model of $H_n$ planted in $G(n,p)$ exhibits AoN at linear scale at $\pAoN=\pmmt(H)$. Compare with Example~\ref{x:cycle}, which addresses AoN at the exponential scale for cycles much larger than \eqref{e:tree.cycle.bound}. We also note that Theorem~\ref{t:aon.sparse} does not apply to the $k$-cycle with $k$ extra edges  (Example~\ref{x:cycle.out}), since this is not strongly balanced. \EH
\end{example}

In preparation for the proof of Theorem~\ref{t:aon.sparse} we introduce some new notation.  For a subgraph $J\subseteq H$, in keeping with our earlier notation 
\eqref{e:M.H} let $M_{J,H}$ denote the number of copies of $J$ in $H$.
Given a copy $J_0$ of $J$ in $K_n$, let $M_{H|J}$ denote the number of ways to extend this to a copy of $H$: we can rewrite this as
	\[
	\frac{M_{H|J}}{M_H}
	=\bP(J_0\subseteq \bH)
	\]
where $\bH\sim\bP$. Note also that if $\bJ\sim \bP_J$ denotes a uniform copy of $J$ in $K_n$, then we have
	\beq\label{e:double.count}
	\frac{M_{H|J}}{M_H}
	=\bP(J_0\subseteq \bH)
	=\bP_J (\bJ \subseteq H_0)
	= \frac{M_{J,H}}{M_J}\,,
	\eeq
where $H_0$ denotes a fixed copy of $H$ in $K_n$.

\begin{proof}[Proof of Theorem~\ref{t:aon.sparse}]
We start by remarking that the conditions on $v(H_n)$ and $e(H_n)$ imply
	\beq\label{e:rearranged.condition}
	2\Big(v(H) + |H|\Big)\log v(H)
	-c\log n
	\le \frac{2c\log n}{3[1-o_n(1)]} - c\log n
	\le -\frac{c\log n}{4}\,.
	\eeq
\BM By Theorem~\ref{t:bern.aon.linear}, \EH it suffices to check the condition~\eqref{e:growth}. For $q\in[0,1]$, we now bound
	\[
	F_q
	\equiv
	\frac{\bP^{\otimes2}
	(|\bH\cap\bH'|=|H|q)}{\pmmt(H)^{|H|q}}
	\le
	\sum_{\substack{F\subseteq H,\\|F|\ge |H|q}}
	\mathbf{1}\Big\{\textup{$F$ vertex-induced}\Big\}
	\sum_{\substack{J\subseteq F,\\|J|= |H|q}}
	\mathbf{1}\Big\{\textup{$V(J)=V(F)$}\Big\}
	\frac{\BH M_{H|J}}{(M_H)^{1-q}}
	\,.
	\]
To form another copy $H'\subseteq K_n$ of $H$ with $H\cap H'=J$, we need to choose $v(H)-v(F)$ vertices from the $n-v(H)$ available vertices, and also choose $|H|(1-q)$ edges among the at most $v(H)^2$ possible edges. This gives a crude bound
	\[
	M_{H|J}
	\le \binom{n-v(H)}{v(H)-v(F)}
	(v(H)^2)^{|H|(1-q)}\,.
	\]
Next note that the strongly balanced assumption implies
	\[
	\frac{v(J)}{|J|}
	\ge \frac{v(H)}{H} + \frac{c(1-q)}{|J|}\,,
	\]
so $v(J) \ge v_{|H|q}(H) \ge v(H) q+ c(1-q)$. 
The strongly balanced assumption also implies
    \[
	|F| \le \frac{|H|(v(F)-c)}{v(H)-c}\,.
	\]
If we then account for the enumeration of $F$ and $J$, we obtain
	\[
	F_q
	\le\frac{(v(H)^2)^{|H|(1-q)}}{(M_H)^{1-q}} 
	\sum_{w=v(H)q + c(1-q)}^{v(H)}
	\binom{v(H)}{w}
	\bigg( \frac{|H|(w-c)}{v(H)-c}\bigg)^{|H| \frac{w-c}{v(H)-c}-|H|q}
	\binom{n-v(H)}{v(H)-w}\,,\]
where the first binomial coefficient accounts for the number of vertex-induced subgraphs $F$. Simplifying the bound gives
	\begin{align*}
	F_q&\le
	\frac{v(H)^{2|H|(1-q)}}{(M_H)^{1-q}} 
	\sum_{w=v(H)q + c(1-q)}^{v(H)}
	\Big[v(H) (n-v(H)\Big]^{v(H)-w}
	\bigg( \frac{|H|(w-c)}{v(H)-c}\bigg)^{|H| \frac{w-c}{v(H)-c}-|H|q}\\
	&\equiv
	\frac{v(H)^{2|H|(1-q)}}{(M_H)^{1-q}} 
	\sum_{w=v(H)q + c(1-q)}^{v(H)}
	\exp f_q(w)\,.
	\end{align*}
We claim that $f_q$ is decreasing in $w$, so that each term in the last sum is upper bounded by
	\[
	f_q\Big(v(H)q + c(1-q)\Big)
	=
	(v(H)-c)(1-q)
	\log\bigg\{v(H) (n-v(H))\bigg\}\,.
	\]
We also have the trivial inequality
	\[
	\frac{1}{(M_H)^{1-q}}
	=1\bigg/ \bigg( \binom{n}{v(H)} \Psi_H\bigg)^{1-q}
	\le1\bigg/ \binom{n}{v(H)}^{1-q}
	\le \frac{1}{(n-v(H))^{v(H)(1-q)}}
	\]
Combining the above bounds gives
	\[
	F_q
	\le
	v(H) \cdot v(H)^{2|H|(1-q)}
	\frac{[v(H) (n-v(H))]^{(v(H)-c)(1-q)}}{(n-v(H))^{v(H)(1-q)}}
	\le \bigg( \frac{v(H)^{2(|H|+v(H))}}{(n-v(H))^c}\bigg)^{1-q}
	\,.
	\]
so it follows from~\eqref{e:rearranged.condition} that $F_q \le 1 \le \exp(o(H))$, which gives condition~\eqref{e:growth}.
It remains to verify that $f_q$ is in fact nonincreasing in $w$ as claimed above. To this end we bound
	\begin{align*}
	\frac{d}{dw} f_q(w)
	&= - \log\Big[v(H) (n-v(H)\Big]
	+\frac{|H|}{v(H)-c}
	\log \bigg( \frac{|H|(w-c)}{v(H)-c}\bigg)
	+ \frac{|H|}{v(H)-c}-\frac{|H|q}{w-c}\\
	&\le
	-\log\Big[2(n-2)\Big]
	+ \frac{|H|\log |H|}{v(H)-c}
	+\frac{|H|(1-q)}{v(H)-c}\,,
	\end{align*}
having used that $v(H)\ge2$ (which clearly holds for large $n$, since we assumed $v(H_n)\to\infty$). Then, since $v(H_n)\to\infty$,
$|H_n|\to\infty$, and
$|H| \le v(H)^2$, we can further bound
	\[
	\frac{d}{dw} f_q(w)
	\le
	-\log n
	+ \frac{3|H|\log v(H)}{v(H)}
	\le
	-\log n
	+ \frac{|H|\log v(H)}{c}
	\le0\,,
	\]
where the last inequality again holds by \eqref{e:rearranged.condition}. This concludes the proof.
\end{proof}

\section{AoN at exponential scale}
\label{s:exp}

Recall the generalized expectation threshold $\PSI_q(H)$ from Definition~\ref{d:gen.expectation.threshold}, and let us define
    \beq\label{e:lambda.q}
    \lambda_q(H)
    \equiv \log \frac{1}{\PSI_q(H)}\,.
    \eeq
We then introduce the following:

\begin{definition}[first-moment-stable]
\label{d:first.mmt.stable}
We say the graph sequence $H=(H_n)_{n\ge1}$ is \emph{first-moment-stable} if it holds uniformly in $n$ that
	\[
	\lim_{q\uparrow 1}\lambda_q=\lambda_1\,,
	\]
for $\lambda_q$ as in \eqref{e:lambda.q}. Informally this says that if $|J|=|H|(1-o(1))$ then we must have
$\pmmt(J) = \pmmt(H)^{1+o(1)}$. We will see that
it is equivalent to require $\pmmt(J) \le \pmmt(H)^{1-o(1)}$;
the other direction holds automatically.
\end{definition}

\begin{definition}[first-moment-flat]
\label{d:first.mmt.flat}
We say that a graph sequence $H=(H_n)_{n\ge1}$ is \emph{first-moment-flat} if
	\[
	\frac{\lambda_q(H_n)}{\lambda_{q'}(H_n)}
	= 1+o_n(1)
	\]
for all $q,q'\in(0,1)$.
\end{definition} 

The conditions from
Definitions~\ref{d:first.mmt.stable} and \ref{d:first.mmt.flat} should be compared with 
the conditions
that appeared in the result for dense graphs Theorem~\ref{t:dense.linear.aon}:
delocalized
(Definition~\ref{d:delocalized}) and
almost-balanced
(condition~\eqref{e:almost.bal}). \BH To give a concrete instance, the clique with out-edges discussed in Examples~\ref{example:sun} and \ref{example:sun:generalize} is delocalized, but not first-moment-stable.
\EH 

\begin{theorem}[AoN at exponential scale]\label{t:aon.exp}
Let $H=(H_n)_{n\ge1}$ be a graph sequence with 
$|H_n|\to\infty$ and $\pmmt(H_n)$ bounded away from one.
Suppose further that $H$ is first-moment-stable in the sense of Definition~\ref{d:first.mmt.stable}, and also that either $\pmmt(H_n)\to0$ or $v(H)\le n^{o(1)}$. Then AoN occurs at the exponential scale if and only if $H$ is first-moment-flat (Definition~\ref{d:first.mmt.flat}). In this case $\pAoN=\pmmt(H)$.
\end{theorem}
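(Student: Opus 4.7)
The plan is to handle the two directions of the ``if and only if'' separately; the assertion $\pAoN=\pmmt(H)$ will follow from Theorem~\ref{thm:locating} as soon as AoN at exponential scale is established, so I focus on the characterization itself.

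For the ``if'' direction, assume $H$ is first-moment-flat and first-moment-stable. The plan is to verify the weak growth condition~\eqref{e:weak.growth} for the uniform measure $\bP$ on copies of $H$ in $K_n$ (with $M=M_H$ and $K=|H|$), so that Theorem~\ref{t:bern.aon.exp} yields AoN at exponential scale at $\pmmt(H)$. The central combinatorial estimate is
\[
\bP^{\otimes 2}\Big(|\bH\cap\bH'|\ge\ell\Big)\ \le\ \sum_{[J]\subseteq H:\,e(J)=\ell}\frac{M_{J,H}^2}{M_J}\ =\ \sum_{[J]}M_{J,H}^2\,\pmmt(J)^{\ell},
\]
which follows from a union bound over isomorphism classes of the intersection graph together with the symmetry identity~\eqref{e:double.count}. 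First-moment-flat replaces each $\pmmt(J)^\ell$ by $\pmmt(H)^{\ell(1-o(1))}$ uniformly for $\ell/|H|$ in a closed sub-interval of $(0,1)$; first-moment-stable extends this to $\ell$ near $|H|$, and $\ell$ near $0$ is handled trivially since $\bP^{\otimes 2}\le 1$ while $M_H\to\infty$. I would then bound $\sum_{[J]}M_{J,H}^2$ using $M_{J,H}\le v(H)^{v(J)}$ together with the bound $2^{|H|}$ on the number of isomorphism classes. To absorb the resulting factor into $e^{o(\log M_H)}$, split into the two regimes: if $\pmmt(H)\to 0$ then $\lambda_1\to\infty$ dominates the $O(|H|)$ log-terms, whereas if $v(H)\le n^{o(1)}$ then $v(H)^{O(v(H))}$ is subpolynomial in $n$ and is negligible next to $\log M_H\ge v(H)\log n\cdot(1-o(1))$.

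For the ``only if'' direction, I argue by contraposition. If first-moment-flat fails, then along a subsequence there exist $q_0\in(0,1)$, $\delta>0$, and $J\subseteq H$ with $e(J)\ge q_0|H|$ and $\pmmt(J)\ge\pmmt(H)^{1-\delta}$. Fix $\eta\in(0,\delta)$ and set $p=\pmmt(H)^{1-\eta}$; if AoN held at $\pmmt(H)$, this $p$ would lie in the ``nothing'' regime. I plan to contradict this by showing $\mmse_n(p)/|H|\le 1-q_0+o(1)$. Since $p\le \pmmt(J)^{1+\eta'}$ with $\eta'=(\delta-\eta)/(1-\delta)>0$, the expected number of $J$-copies in $\bY=\bH\cup\bG$ that are \emph{not} contained in $\bH$ equals $\sum_{\ell=0}^{e(J)-1}p^{e(J)-\ell}N_\ell$, where $N_\ell$ denotes the number of $J$-copies in $K_n$ sharing exactly $\ell$ edges with a fixed copy of $H$. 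The dominant term at $\ell=0$ is $M_J\,p^{e(J)}=(p/\pmmt(J))^{e(J)}\to 0$ exponentially in $e(J)$, and the remaining middle-$\ell$ terms are controlled by enumerating intersection patterns in the spirit of Lemma~\ref{lem:count:bound}. Conditional on the resulting high-probability event that no extra $J$-copy appears in $\bY$, every copy $H''\subseteq\bY$ must contain a $J$-subcopy lying in $\bH$, forcing $|\bH\cap H''|\ge e(J)\ge q_0|H|$. Via the Nishimori identity~\eqref{e:nishimori}, a posterior sample $\bH''$ then satisfies $\E|\bH\cap\bH''|\ge q_0|H|(1-o(1))$, giving the required MMSE upper bound and the contradiction.

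The main obstacle is the growth-condition verification in the ``if'' direction: bounding $\sum_{[J]}M_{J,H}^2$ sharply enough for the exponential-scale target requires distinguishing the two hypotheses ($\pmmt(H)\to0$ versus $v(H)\le n^{o(1)}$), and I do not see how to unify them in a single argument. A secondary technical hurdle in the ``only if'' direction is estimating the middle-$\ell$ contributions to $\sum_\ell p^{e(J)-\ell}N_\ell$, which require intersection-pattern bounds analogous to Lemma~\ref{lem:count:bound} applied to the pair $(J,H)$ rather than to $(H,H)$.
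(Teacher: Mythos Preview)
Your ``if'' direction follows the same route as the paper --- verify the weak growth condition~\eqref{e:weak.growth} and invoke Theorem~\ref{t:bern.aon.exp} --- and your central estimate
\[
\bP^{\otimes 2}\big(|\bH\cap\bH'|\ge\ell\big)\ \le\ \sum_{[J]:\,|J|=\ell}\frac{M_{J,H}^2}{M_J}
\]
together with first-moment-flat/stable to replace $1/M_J$ by $M_H^{o(1)}\pmmt(H)^\ell$ is correct. The gap is in your bound on $\sum_{[J]}M_{J,H}^2$: you state $2^{|H|}\cdot v(H)^{2v(H)}$, a \emph{product} of the two relevant scales, and your case split absorbs only one factor in each regime. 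In the regime $v(H)\le n^{o(1)}$ with $\pmmt(H)$ bounded away from zero (e.g.\ a $k$-clique with $k\asymp\log n$), one has $|H|\asymp\log M_H$, so the leftover factor $2^{|H|}$ is $M_H^{\Theta(1)}$, not $M_H^{o(1)}$, and \eqref{e:weak.growth} does not follow. The paper's remedy is to restrict the union bound to subgraphs $J_0$ that arise as an \emph{exact} intersection $H_0\cap\bH$: any such $J_0$ is determined by a partial bijection $V(H_0)\rightharpoonup V(\bH)$, of which there are at most $(v(H)+1)^{v(H)}$. This restriction yields the two separate bounds $\sum_{J_0\in I_\ell}M_{J,H}\le 2^{O(|H|)}$ and $\le v(H)^{O(v(H))}$, hence the required $\min$, which then matches condition~\eqref{e:good} via Lemma~\ref{l:good}.

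Your ``only if'' direction takes a genuinely different tack from the paper, and the middle-$\ell$ obstacle you flag is a real one, not a detail. Making your argument go through requires $\sum_{\ell<e(J)}N_\ell\,p^{e(J)-\ell}=o(1)$, and for intermediate $\ell$ this demands uniform control of $\pmmt(F)$ over subgraphs $F$ at \emph{all} scales --- precisely what is unavailable, since you are in the regime where first-moment-flat \emph{fails}. The paper sidesteps this completely: rather than proving ``not nothing'' directly, it \emph{uses} the assumed ``nothing'' at some $p\in[\pAoN,\,\pmmt(J)^{1+\delta}]$ (the interval is nonempty by Theorem~\ref{thm:locating}, applicable via Corollary~\ref{c:right.cts.implies.weak.sep}) to conclude that $\bY$ contains a copy of $H$ nearly disjoint from $\bH$, hence the \emph{null} graph $\bG$ contains an $\eta$-approximate copy of $J$. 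A single first-moment bound on $\eta$-approximate $J$-copies under $\Q_p$ then gives the contradiction, with no overlap analysis required.
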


The main purpose of this section is to prove Theorem~\ref{t:aon.exp}, which characterizes AoN at exponential scale for graphs that either have first moment threshold tending to zero, or
are subpolynomial in size. This section is organized as follows:

\begin{itemize}
\item In \S\ref{ss:rws} we return to the generalized setting of Definition~\ref{d:bern.inference}, and show that if the prior $\bP$ satisfies a certain ``replica weak separation'' (RWS) condition, then AoN at exponential scale can \emph{only} occur at the first moment threshold.

\item In \S\ref{ss:first.mmt.stable} we show that if the graph
sequence $H=(H_n)_{n\ge1}$ is first-moment-stable (Definition~\ref{d:first.mmt.stable}), then the uniform measure $\bP$ on copies of $H$ in $K_n$ satisfies RWS.

\item In \S\ref{ss:exp.forward} we prove the forward direction of Theorem~\ref{t:aon.exp} by showing if that the planted subgraph model has AoN at the exponential scale, then the graph sequence must be first-moment-flat.

\item \S\ref{ss:exp.reverse} we prove the reverse direction of Theorem~\ref{t:aon.exp} by showing if that the graph sequence
is first-moment flat, then the corresponding planted subgraph model has AoN at the exponential scale.
\end{itemize}
Theorem~\ref{t:aon.exp} \BM goes beyond the dense regime --- this is illustrated by
Examples~\ref{x:matching}, \ref{x:small.bal}, \ref{x:cycle.out} from \S\ref{ss:beyond}, as well as by the following: \EH

\begin{example}[cycle]\label{x:cycle}
Let $H$ be a cycle on $k$ vertices. If $J\subseteq H$ with $1<|J|=\ell<k$, then $J$ must consist of $p$ disjoint paths, with $1\le p \le\ell$,
of lengths $\ell_1,\ldots,\ell_p$ summing to $\ell$. It follows that $v(J)=\ell+p$. We also note that an automorphism of $J$ is determined by how it acts on the endpoints of the paths, so crudely
$|\Aut J| \le (2p)^p$.
It follows that
	\[
	\pmmt(J)
    \stackrel{\eqref{e:graph.first.mmt.threshold}}{=}
	\bigg( \frac{1}{M_J}\bigg)^{1/|J|}
	=\Big(1+o(1)\Big)
	\frac{|\Aut J|^{1/|J|}}{ n^{v(J) / |J|}}
	\le
	\frac{1+o(1)}{n}
	\bigg(\frac{2p}{n}\bigg)^{p/\ell}
	\le \frac{1+o(1)}{\pmmt(H)}\,.
	\]
This shows that $J$ is first-moment-stable (Definition~\ref{d:first.mmt.stable})
and first-moment-flat (Definition~\ref{d:first.mmt.flat}).
It then follows from Theorem~\ref{t:aon.exp}
that the model of $H_n$ planted in $G(n,p)$ exhibits AoN at exponential scale at $\pmmt(H) = (1+o(1))/n$. Compare with Example~\ref{x:small.tree.cycle} (which is restricted to smaller cycles, but gives AoN at linear scale), and with Example~\ref{x:cycle.out} (the cycle with out-edges).
\end{example}

\BM We also include an example where AoN does not occur even at the exponential scale: \EH

\begin{example}[lack of AoN at exponential scale] \label{x:two.cliques}
Take $1 \ll k_2 \le k_1 \le O(\log n)$ where the ratio $k_2/k_1$ is a \BH small constant $\delta$. \EH Let $H=H_n$ be the disjoint union of $J_1,J_2$ where each $J_i$ is a clique on $k_i$ vertices. Then
	\[
	1-\epsilon \le \frac{|J_1|}{|H|} \le 1-\frac{\epsilon}{2}
	\]
for a small constant $\BH\epsilon = (1+o(1)) \delta^2$. It follows that
	\[
	\lambda_{1-\epsilon}(H)
	\le \log \frac{1}{\pmmt(J_1)}
	= \frac{2}{k_1-1}\log n + o(1)
	\]
On the other hand, for the full graph we have
	\[
	\lambda_1(H)
	= \log\frac{1}{\pmmt(H)}
	= \frac{2(1+\Theta(\delta))}{k_1-1} \log n+ o(1)\,,
	\]
so $H$ is not first-moment-flat (Definition~\ref{d:first.mmt.flat}). \BH It is however first-moment-stable (Definition~\ref{d:first.mmt.stable}). \EH It follows from Theorem~\ref{t:aon.exp} that the model of $H_n$ planted in $G(n,p)$ does not have AoN at the exponential scale. Indeed, 
for $\pmmt(J_2) < p < \pmmt(J_1)$ on the exponential scale, it will be possible to recover $J_1$ but not $J_2$.
\end{example}

\subsection{Locating the AoN threshold under RWS condition} 
\label{ss:rws}
In this subsection we show that if $\bP$ satisfies a ``replica weak separation'' condition (Definition~\ref{d:rws} below), then at the exponential scale it can \emph{only} exhibit all-or-nothing at the first moment threshold $\pmmt$ from \eqref{def:p1M}. 

\begin{definition}[replica weak separation]\label{d:rws}
We say the measure $\bP$ satisfies \emph{replica weak separation} (RWS) if
\beq\label{e:rws}
\adjustlimits
\lim_{\delta\downarrow0} \limsup_{N \to \infty} 
\frac{1}{\log M}
	\log\bigg\{
	\bP^{\otimes 2}\Big(|\bS \cap \bS'| \ge (1-\delta)K \Big) 
	M
	\bigg\}=0\,,\eeq
where $\bS,\bS'$ are independent samples from $\bP$.
\end{definition}

In \S\ref{ss:first.mmt.stable} we show that if $H$ is first-moment-stable in the sense of Definition~\ref{d:first.mmt.stable}, then the uniform measure on copies of $H$ in $K_n$ satisfies the RWS condition of Definition~\ref{d:rws}. The following lemma explains how this is related to the earlier condition \eqref{e:weak.growth} which was used in the proof of Theorem~\ref{t:bern.aon.exp}:

\begin{lemma} 
If the prior satisfies \eqref{e:weak.growth} and $\log K \ll \log M$,
then it also satisfies \eqref{e:rws}.
\end{lemma}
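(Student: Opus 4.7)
The plan is to pass from a pointwise bound under \eqref{e:weak.growth} to the tail bound in \eqref{e:rws} via a union bound, using the hypothesis $\log K \ll \log M$ only to absorb a $\log K$ error term.

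First, I would unpack \eqref{e:weak.growth}. Since $\pmmt = M^{-1/K}$, the condition says that for every $\epsilon > 0$ there exists $N_0$ such that, for all $N \ge N_0$ and all $\ell \in \{0,1,\ldots,K\}$,
\begin{equation*}
\bP^{\otimes 2}(|\bS \cap \bS'| = \ell) \le M^{-\ell/K + \epsilon}.
\end{equation*}
The right-hand side is monotone decreasing in $\ell$, so for every $\ell \ge (1-\delta)K$ we get the uniform bound $\bP^{\otimes 2}(|\bS \cap \bS'| = \ell) \le M^{-(1-\delta) + \epsilon}$.

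Next, I would sum the pointwise bound over $\ell$. There are at most $K+1 \le 2K$ values of $\ell$ with $\ell \ge (1-\delta)K$, so
\begin{equation*}
\bP^{\otimes 2}\bigl(|\bS \cap \bS'| \ge (1-\delta)K\bigr) \cdot M \;\le\; 2K \cdot M^{\delta + \epsilon}.
\end{equation*}
Taking logarithms, dividing by $\log M$, and using the assumption $\log K = o(\log M)$, one obtains
\begin{equation*}
\frac{1}{\log M} \log\Bigl\{ \bP^{\otimes 2}\bigl(|\bS \cap \bS'| \ge (1-\delta)K\bigr) \cdot M \Bigr\} \;\le\; \frac{\log(2K)}{\log M} + \delta + \epsilon,
\end{equation*}
so the $\limsup_{N\to\infty}$ is at most $\delta + \epsilon$. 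Since $\epsilon>0$ is arbitrary the $\limsup$ is $\le \delta$, and sending $\delta \downarrow 0$ gives the upper bound in \eqref{e:rws}. For the matching lower bound it suffices to note that $\bP$ is uniform on $\cS$ with $|\cS| = M$, whence $\bP^{\otimes 2}(\bS = \bS') = 1/M$, so the quantity inside the logarithm is always at least $1$; this forces the limit in \eqref{e:rws} to equal $0$ rather than be negative.

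I do not foresee a real obstacle: the argument is essentially a union bound combined with the monotonicity of $\ell \mapsto M^{-\ell/K}$. The only mild subtlety is making sure that the $\limsup$ taken inside the $\lim_{\delta\downarrow0}$ is well-defined; this is handled by choosing $\epsilon$ depending on $\delta$ (e.g.\ $\epsilon = \delta$) before taking limits.
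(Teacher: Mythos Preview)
Your proof is correct and follows essentially the same approach as the paper: both arguments sum the pointwise bound from \eqref{e:weak.growth} over $\ell \ge (1-\delta)K$, use $M^{1-\ell/K} \le M^\delta$ on that range, and absorb the factor of $K$ via $\log K \ll \log M$. Your treatment is slightly more careful in that you explicitly note the matching lower bound (from $\bP^{\otimes 2}(\bS=\bS')=1/M$) needed to conclude the limit equals $0$ rather than merely $\le 0$; the paper omits this, presumably regarding it as trivial.
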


\begin{proof}
Recall from \eqref{def:p1M} that $\pmmt=M^{-1/K}$. It follows that
	\begin{align*}
	&\bP^{\otimes 2}\Big(|\bS \cap \bS'| \ge (1-\delta)K \Big) 
	M
	= \sum_{\ell = K(1-\delta)}^K
	\frac{\bP^{\otimes 2}(|\bS \cap \bS'| =\ell )}{(\pmmt)^\ell}
	M^{1-\ell/K} \\
	&\qquad
	\le
	M^{\delta}  \sum_{\ell = K(1-\delta)}^K
		\frac{\bP^{\otimes 2}(|\bS \cap \bS'| =\ell )}{(\pmmt)^\ell}
	\stackrel{\eqref{e:weak.growth}}{\le} 
	M^\delta  K\delta
	\exp(o(\log M))\,.
	\end{align*}
The claim follows by recalling the assumption $\log K\ll \log M$ and sending $\delta\downarrow0$.
\end{proof}

\begin{theorem}[location of AoN threshold at exponential scale]
\label{thm:locating}
Assume $\bP$ satisfies RWS in the sense of condition~\eqref{e:rws} from Definition~\ref{d:rws}. If the corresponding planted model has all-or-nothing at the exponential scale at some critical value $\pAoN$, then
\[ \lim_{N\to\infty} \frac{\log \pAoN}{\log \pmmt}=1,\]where $\pmmt$ is the first moment threshold defined in \eqref{def:p1M}. In other words, $p_{\mathrm{AoN}}=(\pmmt)^{1+o_N(1)}.$
\end{theorem}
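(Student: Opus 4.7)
The plan is to prove the theorem by contradiction, separately ruling out the possibilities $\pAoN \ge \pmmt^{1-\eta}$ and $\pAoN \le \pmmt^{1+\eta}$ along subsequences for each fixed $\eta>0$. In both cases the strategy will be to choose an auxiliary probability $p = \pAoN^{1\pm\epsilon}$ just to one side of $\pAoN$ on the exponential scale (for $\epsilon$ small depending on $\eta$), so that the AoN hypothesis dictates a specific limiting value of $\mmse_N(p)/K$, and then to derive the opposite conclusion by a first- or second-moment argument.

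For the direction $\pAoN \le \pmmt^{1-o(1)}$, I would start from $\pAoN \ge \pmmt^{1-\eta}$ and choose $\epsilon < \eta/(2(1-\eta))$ so that $p := \pAoN^{1+\epsilon} \ge \pmmt^{1-\eta/2}$; at this $p$ the AoN hypothesis forces $\mmse_N(p)/K \to 0$. Using Nishimori~\eqref{e:nishimori} to rewrite $\mmse_N(p)/K = 1 - \E_\P[|\bS \cap \bS'|]/K$ for $\bS'$ a posterior sample, and the algebraic bound $\E_\P[|\bS\cap \bS'|]/K \le (1-\delta) + \delta\, \E_\P[\sum_{\ell > (1-\delta)K} \bZ_\ell/\bZ]$, the task reduces to bounding the latter expectation. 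The planting first-moment estimate in Lemma~\ref{lem:planting_first} replaces $1/\bZ$ by $1/(\epsilon_0 \E_\Q \bZ)$ on an event of probability at least $1-\epsilon_0$, and the change-of-measure identity in Lemma~\ref{lem:change_of_measure} then reduces the numerator to $\sum_{\ell > (1-\delta)K} M \bP^{\otimes 2}(|\bS\cap\bS'|=\ell) p^{K-\ell} \le M \bP^{\otimes 2}(|\bS\cap\bS'|>(1-\delta)K)$. Since $\E_\Q \bZ = Mp^K \ge M^{\eta/2}$ while \eqref{e:rws} bounds the numerator by $M^{o_\delta(1)}$, the ratio is $M^{o_\delta(1)-\eta/2}/\epsilon_0 \to 0$ once $\delta$ is small enough that $o_\delta(1) < \eta/2$; choosing $\epsilon_0 = 1/2$ then yields $\E_\P[|\bS\cap\bS'|]/K \le 1 - \delta/2 + o(1)$, contradicting ``all''.

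For the reverse direction $\pAoN \ge \pmmt^{1+o(1)}$, I would start from $\pAoN \le \pmmt^{1+\eta}$ and choose $\epsilon < \eta/(2(1+\eta))$ so that $p := \pAoN^{1-\epsilon} \le \pmmt^{1+\eta/2}$; at this $p$ AoN forces $\mmse_N(p)/K \to 1$. Applying Lemma~\ref{l:mmse.ubd} with $\delta = 1-\eta/4$, its hypothesis becomes $\sum_{\ell \le K\eta/4} \bP^{\otimes 2}(|\bS\cap\bS'|=\ell)\, Mp^{K-\ell} = o(1)$, and using only $\sum_\ell \bP^{\otimes 2}(|\bS\cap\bS'|=\ell) \le 1$ and $p^{K-\ell} \le p^{K(1-\eta/4)}$ the sum is dominated by $Mp^{K(1-\eta/4)} \le M^{1-(1+\eta/2)(1-\eta/4)} = M^{-\eta/4 + O(\eta^2)} \to 0$. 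The lemma then yields $\mmse_N(p)/K \le 1 - \eta/4 + o(1)$, contradicting ``nothing''. No RWS is needed here, reflecting the intuition that $p \ll \pmmt$ forces $\bZ(\bY)$ to be typically $1$ under the planted measure and hence the planted copy to be identifiable.

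The main technical obstacle is the first direction. A direct application of Lemma~\ref{l:mmse.lbd} at $p \approx \pmmt^{1-\eta/2}$ would demand a growth condition on $\bP^{\otimes 2}(|\bS\cap\bS'|=\ell)$ across all $\ell$, which RWS does not supply. The workaround is to truncate overlaps at level $(1-\delta)K$, absorb the small-overlap contribution into the trivial factor $(1-\delta)$, and invoke RWS only on the large-overlap contribution through the planting trick. The delicate point is to take $\delta$ small enough that the RWS exponent $o_\delta(1)$ undershoots $\eta/2$, yet still positive enough to produce a genuine MMSE gap.
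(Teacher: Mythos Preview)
Your argument is correct and takes a genuinely different route from the paper.  The paper splits the conclusion into Corollaries~\ref{c:paon.above.pc} and~\ref{c:paon.below.pc}: the bound $\pAoN \ge (\pmmt)^{1+o(1)}$ is obtained by integrating an I--MMSE--type inequality (Lemma~\ref{l:kl.divergence.derivative}) for the KL divergence $D(p)=\DKL(\P_p\,|\,\Q_p)$, while the bound $\pAoN \le (\pmmt)^{1-o(1)}$ goes through the mutual information $I(p)$ via Lemma~\ref{lem:ent_post}, which is where RWS enters.  By contrast, you bypass both information-theoretic quantities entirely: for the lower bound on $\pAoN$ you simply verify the first-moment hypothesis of Lemma~\ref{l:mmse.ubd} at $p \le \pmmt^{1+\eta/2}$ (no RWS needed, in agreement with the paper), and for the upper bound you combine Nishimori, the planting estimate Lemma~\ref{lem:planting_first}, and RWS to bound $\P(|\bS\cap\bS'|>(1-\delta)K)$ directly away from~$1$ at $p\ge\pmmt^{1-\eta/2}$.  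Your approach is more elementary and self-contained within the moment-method toolkit of \S\ref{ss:all}--\S\ref{ss:nothing}; the paper's approach is more conceptual and yields as byproducts the monotone relations between $D(p)$, $I(p)$, and the MMSE, which may be of independent interest.  One small remark: what you attribute to Lemma~\ref{lem:change_of_measure} is really the direct identity~\eqref{e:all.bound.A}; the change-of-measure lemma gives an equivalent expression but is not what you actually use.
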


In preparation for the proof, let $m_i$ be the posterior probability of $i\in\bS$, given $\bY=\bS\cup\bV$:
	\beq\label{e:calculate.m.i}
	m_i\equiv m_i(\bY)=\E_{\P}(\theta_i\,|\,\bY)=\P(i\in\bS\,|\,\bY)
	= \frac{\bZ_{\ni i}(\bY)}{\bZ(\bY)}\,,
	\eeq
where $\bZ_{\ni i}(\bY)$ denotes the number of subsets ${\BG S' \in \bS}$ \BG that are contained $\bY$ with $i\in{\BG S'}$. \EH Note that if $i\notin\bY$ then 
$\bZ_{\ni i}(\bY)=0$, and therefore $m_i=0$. Note also that
	\beq\label{e:m.i.sum.K}
	\sum_{i=1}^N m_i = K
	\eeq
with probability one, since the prior is uniform on sets of size $K$.
The next lemma gives a bound on the derivative of $D(p)$ in terms of the MMSE of the model:

\begin{lemma}\label{l:kl.divergence.derivative}
Let $D(p)\equiv\DKL(\P_p\,|\,\Q_p)$ be the Kullback--Leibler divergence between the laws of $\bY$ under the planted and null models. Then
	\[
	\frac{d}{dx}D(e^{-x})
	\le K-\mmse\,.
	\]
\end{lemma}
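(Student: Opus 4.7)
The plan is to reduce the lemma to a coordinatewise computation using the product structure of the Bernoulli noise, and then invoke a single elementary log-inequality. Starting from the Radon--Nikodym expression \eqref{e:radon.nikodym} together with $\E_\Q\bZ(\bY)=Mp^K$, I first rewrite $D(p) = L(p) - \log M + Kx$, where $L(p):= \E_\P\log\bZ(\bY)$ and $x=-\log p$; the chain rule then collapses the lemma to the equivalent inequality $p\,L'(p) \ge \mmse$. The posterior of $\bS$ given $\bY$ is uniform on $\{S'\in\cS: S'\subseteq\bY\}$, so $L(p)$ coincides with $H(\bS\mid \bY)$ and the coordinatewise posterior means $m_i(\bY) = \bZ_{\ni i}(\bY)/\bZ(\bY)$ from \eqref{e:calculate.m.i} satisfy the familiar identity $\mmse = K-\sum_i \E_\P[m_i^2]$.

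To compute $L'(p)$ I would use that, conditional on $\bS$, the noise coordinates $\bY_i$ for $i\notin \bS$ are i.i.d.\ $\Bern(p)$. Differentiating $\E_\P[\log\bZ(\bY)\mid\bS]$ in $p$ via the product form of the conditional pmf yields
\[
L'(p) = \E_{\bS}\sum_{k\notin\bS}\Big(\E[\log\bZ(\bY)\mid\bS,\bY_k=1] - \E[\log\bZ(\bY)\mid\bS,\bY_k=0]\Big).
\]
The key combinatorial identity is
\[
\log\bZ(Y\cup\{k\}) - \log\bZ(Y) = -\log\bigl(1-m_k(Y\cup\{k\})\bigr), \qquad k\notin Y,
\]
which follows from the decomposition $\bZ(Y\cup\{k\}) = \bZ(Y) + \bZ_{\ni k}(Y\cup\{k\})$ together with the definition of $m_k$. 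Coupling the two inner conditionals to share $\bY_{-k}$ turns the $k$-th summand into $\E[-\log(1-m_k(\bY))\mid\bS,\bY_k=1]$, and the elementary bound $-\log(1-t)\ge t$ lower-bounds it by $\E[m_k(\bY)\mid\bS,\bY_k=1]$.

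Two simple observations then finish the argument. First, $m_k(\bY)=0$ whenever $\bY_k=0$ (no $S'\ni k$ can fit inside $\bY$), so after multiplying through by $\P(\bY_k=1\mid\bS)=p$ the inner conditional telescopes into $p^{-1}\E_\P[\mathbf{1}\{k\notin\bS\}m_k(\bY)]$. Second, the tower rule gives $\E_\P[\mathbf{1}\{k\in\bS\}m_k] = \E_\P[m_k \E[\mathbf{1}\{k\in\bS\}\mid\bY]] = \E_\P[m_k^2]$. Combined with $\sum_i m_i=K$ from \eqref{e:m.i.sum.K}, these yield
\[
p\,L'(p) \ge \E_\P\!\sum_{k\notin\bS}\! m_k = K - \E_\P\!\sum_{k\in\bS}\! m_k = K - \sum_k\E_\P[m_k^2] = \mmse,
\]
which is the required inequality. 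The non-routine ingredient is the log-identity in the second paragraph; everything else is bookkeeping with the Bernoulli product structure, and the slack in $-\log(1-t)\ge t$ is precisely what makes the lemma an inequality rather than an equality.
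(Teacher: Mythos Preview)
Your proposal is correct and follows essentially the same route as the paper's proof: both rewrite $D(p)$ via \eqref{e:radon.nikodym}, differentiate the product-Bernoulli expectation coordinatewise, identify the discrete derivative of $\log\bZ$ as $-\log(1-m_k)$, and close with the same elementary inequality $-\log(1-t)\ge t$ (the paper writes it as $m+(1-m)\log(1-m)\le m^2$, which is the same bound after multiplying by $1-m$). The only cosmetic difference is the order of steps---you apply the log-inequality first and then the tower identity $\E_\P[\mathbf{1}\{k\in\bS\}m_k]=\E_\P[m_k^2]$, whereas the paper first towers $\sum_{i\notin\bS}(\cdot)$ into $\sum_i(1-m_i)(\cdot)$ and then applies the inequality.
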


\begin{proof}
Abbreviate $\P=\P_p$ and $\Q=\Q_p$.
Recalling \eqref{e:radon.nikodym}, we have
	\beq\label{e:D.p}
	D(p) 
	= \DKL(\P\,|\,\Q)
	= \sum_{\bY} \P(\bY) \log \frac{\P(\bY)}{\Q(\bY)}
	= \E_{\P} \log \frac{\bZ(\bY)}{Mp^K}
	= \E_{\P}  \log \frac{\bZ(\bS\cup\bV)}{M p^K}\,,
	\eeq
where $\bS\sim\bP$ and $\bV\sim\Bern(p)^{\otimes N}$. Next note that for any function $f(\bV)$, we have
	\begin{align}\nonumber
	\frac{d}{dp} \E_{\P} f(\bV)
	&= \E_{\P}\bigg[ f(\bV) \sum_{i=1}^N
		\bigg( \frac{w_i}{p}-\frac{1-w_i}{1-p}\bigg) \bigg]
	= \sum_{i=1}^N
	\E_{\P} \bigg(  f(\bV\cup\{i\})
	- f(\bV \setminus\{i\})\bigg)\\
	&= \frac{1}{p}
		\E_{\P}\bigg[ f(\bV) \sum_{i=1}^N
		\bigg(1-\frac{1-w_i}{1-p}\bigg)\bigg]
	= \frac{1}{p}
	\sum_{i=1}^N 
	\E_{\P} \bigg( f(\bV) - f(\bV\setminus\{i\}) \bigg)\,.
	\label{e:derivative.Ef}
	\end{align}
In the above, $w_i$ refers to the indicator that $i\in\bV$. Now consider $f(\bV)=\log \bZ(\bS\cup\bV)$. If $i\in\bS$, then $f(\bV)$ does not depend on $w_i$. If $i\notin\bS$, then $f(\bV)$ depends on $w_i$, and we have
	\[
	f(\bV)-f(\bV\setminus\{i\})
	= \log \frac{\bZ_{\ni i}(\bS\cup\bV)
	+ \bZ(\bS\cup\bV\setminus\{i\})}{\log \bZ(\bS\cup\bV\setminus\{i\})}
	= \log \frac{\bZ(\bY)}{\log \bZ(\bY\setminus\{i\})}
	\stackrel{\eqref{e:calculate.m.i}}{=}
	\frac{1}{1-m_i(\bY)}\,.
	\]
Substituting this into \eqref{e:derivative.Ef} gives
	\[
	\frac{d}{dp} \E_{\P}\log\bZ(\bS\cup\bV)
	=\E_{\P}\sum_{i\notin\bS}
	\log \frac{1}{1-m_i(\bY)}\,.
	\]
Then, using that $m+(1-m)\log(1-m) \le m^2$ for all $m\in[-1,1]$, we have
	\begin{align}\nonumber
	\frac{d}{dp} D(p)
	&= \frac1p \E_{\P} \sum_{i\notin \bS} \log \frac1{1-m_i(\bY)}
	- \frac{K}{p}
	\stackrel{\eqref{e:m.i.sum.K}}{=}
	\frac1p \E_{\P} \sum_{i=1}^N \bigg\{
	 (1-m_i)
	\log \frac{1}{1-m_i} - m_i \bigg\} \\
	&\ge
	- \frac1p\E_{\P} \sum_{i=1}^N (m_i)^2
	\stackrel{\eqref{e:m.i.sum.K}}{=} 
	\frac1p\bigg( \E_{\P} \sum_{i=1}^N m_i(1-m_i)
	-K\bigg)
	= \frac1p (\textup{MMSE}_N(p)-K)\,.
	\label{I-MMSE_1sd}
	\end{align}
The claim follows by making the change of variables $p=e^{-x}$.
\end{proof}

\begin{corollary}\label{c:paon.above.pc}
If the planted model $\P_p$ has all-or-nothing at the exponential scale at threshold $\pAoN$, then we must have $\pAoN \ge (\pmmt)^{1+o_N(1)}$ for $\pmmt$ as in \eqref{def:p1M}.
\end{corollary}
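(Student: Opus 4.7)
The plan is to use only Lemma~\ref{l:kl.divergence.derivative}, together with an elementary lower bound on $D(p)$ that comes directly from its definition. Writing $x=-\log p$ and recalling $\log M=Kx_{\pmmt}$ from \eqref{def:p1M}, the identity \eqref{e:D.p} rearranges to
\[
D(e^{-x})=\E_{\P}[\log \bZ(\bY)]+K(x-x_{\pmmt}).
\]
Since $\bS\subseteq \bY$ under $\P$, we always have $\bZ(\bY)\ge 1$, so this identity immediately yields the pointwise lower bound $D(e^{-x})\ge K(x-x_{\pmmt})$ valid for any $x\ge 0$. This is the only nontrivial new ingredient needed for the corollary; in particular, no RWS assumption will enter.

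Next, assume without loss of generality (since otherwise the conclusion is immediate) that $x_{\pAoN}>x_{\pmmt}$. I would fix an arbitrary $\epsilon\in(0,1/2)$, set $x_0=(1-\epsilon)x_{\pAoN}$ and $x_1=(1+\epsilon)x_{\pAoN}$, and integrate the inequality $\frac{d}{dx}D(e^{-x})\le K-\mmse(e^{-x})$ from $0$ to $x_1$, using that $D(1)=0$ (since $\P_1=\Q_1$ concentrates on the full set $[N]$). On $[0,x_0]$, the AoN hypothesis applied at the single point $p=\pAoN^{1-\epsilon}$ yields $\mmse(\pAoN^{1-\epsilon})/K\to 1$; since $\mmse$ is monotone non-decreasing in $p$ by Lemma~\ref{l:mmse.nondec} and is trivially bounded above by $K$, this upgrades to $K-\mmse(e^{-u})=o(K)$ \emph{uniformly} over $u\in[0,x_0]$. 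On the short interval $[x_0,x_1]$ of length $2\epsilon\, x_{\pAoN}$, I would just use the trivial bound $K-\mmse\le K$. Together these give
\[
D(e^{-x_1})\le o(K)\cdot x_0+K\cdot 2\epsilon\, x_{\pAoN}=(2\epsilon+o(1))K\,x_{\pAoN}.
\]

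Combining this upper bound with the lower bound $D(e^{-x_1})\ge K((1+\epsilon)x_{\pAoN}-x_{\pmmt})$ and rearranging yields $(1-\epsilon-o(1))x_{\pAoN}\le x_{\pmmt}$. Letting $N\to\infty$ first and then $\epsilon\downarrow 0$ produces $\limsup_{N\to\infty} x_{\pAoN}/x_{\pmmt}\le 1$, equivalently $\pAoN\ge (\pmmt)^{1+o_N(1)}$, as claimed. I don't anticipate a real obstacle here; the only step requiring a bit of care is the uniform convergence $K-\mmse(e^{-u})=o(K)$ on the $N$-dependent interval $[0,x_0]$, but this is precisely what monotonicity of $\mmse$ in $p$ delivers from the single-point AoN statement at the right endpoint.
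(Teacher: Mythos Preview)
Your proof is correct and follows essentially the same approach as the paper: integrate the derivative bound from Lemma~\ref{l:kl.divergence.derivative} starting from $D(1)=0$, use monotonicity of $\mmse$ to get uniform control on the ``nothing'' interval $[0,x_0]$, and compare with the elementary lower bound $D(e^{-x})\ge K(x-x_{\pmmt})$ coming from $\bZ(\bY)\ge 1$. The only difference is cosmetic: the paper evaluates both bounds directly at $x_0=(1-\delta)x_{\pAoN}$, whereas you push the integration further to $x_1=(1+\epsilon)x_{\pAoN}$ and crudely bound the extra interval $[x_0,x_1]$ --- this is unnecessary (stopping at $x_0$ already gives $(1-o(1))(1-\epsilon)x_{\pAoN}\le x_{\pmmt}$) but does no harm.
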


\begin{proof}
Take $\delta>0$ and let $p_0=(\pAoN)^{1-\delta}\equiv \exp(-x_0)$. Since the MMSE is nondecreasing in $p$, we have $\mmse/K\to1$ uniformly over all $[p_0,1]$. Note also that $D(1)=\DKL(\P_1\,|\,\Q_1)=1$, so Lemma~\ref{l:kl.divergence.derivative} gives
	\[
	D(p)
	= D(e^{-x})
	= \int_0^{x} \frac{d}{dt} D(e^{-t}) \,dt
	\le o_N(1) Kx
	= o_N(1) K \log \frac{1}{p}\,,
	\]
uniformly over $p\in[p_0,1]$.
On the other hand, since $\bZ(\bY)\ge1$ always, it follows from \eqref{e:D.p} that
	\[
	D(p) \ge \log\frac{1}{M p^K}
	\stackrel{\eqref{def:p1M}}{=}
	K \bigg\{
	\log \frac{1}{p}-\log \frac{1}{\pmmt} 
	\bigg\}
	\]
for all $p\in[0,1]$.
Combining the above bounds gives
	\[\log \frac{1}{\pmmt}
	\ge (1-o_N(1))\log \frac{1}{p_0}
	= (1-o_N(1)) (1-\delta)
	\log \frac{1}{\pAoN}
	\,,
	\]
and the claim follows by sending $\delta\downarrow0$.
\end{proof}

The next lemma says that under the \BM RWS \EH
condition,
in the ``all'' regime the mutual information between $\P_p$ and $\Q_p$ must be close to its maximal value $\log M$:

\begin{lemma}\label{lem:ent_post}
Let $I(p)\equiv I(\bS;\bY)=H(\bS)-H(\bS\,|\,\bY)$ be the mutual information between $\bS$ and $\bY=\bS\cup\bV$ under the planted model $\P_p$. 
Suppose the prior $\bP$ satisfies the RWS condition (Definition~\ref{d:rws}).
If the model is in the ``all'' regime in the sense that $\mmse/K = o_N(1)$, then
	\[
	\lim_{N\to\infty} \frac{I(p)}{\log M}
	= 1-\lim_{N\to\infty} \frac{\log H(\bS\,|\,\bY)}{\log M}
	= 1\,.
	\]
\end{lemma}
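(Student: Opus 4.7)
The plan is to exploit the key observation that, because the prior $\bP$ is uniform on $\cS$ and every $s\in\cS$ has the same cardinality $K$, the posterior $\P_p(\bS=\cdot\mid \bY=y)$ is \emph{uniform} on $\{s\in\cS:s\subseteq y\}$, a set of size $\bZ(y)$ (see \eqref{e:Z}). Consequently
\[
H(\bS\mid\bY)=\E_{\P}\!\left[\log\bZ(\bY)\right]\quad\text{and}\quad I(p)=\log M-\E_{\P}\!\left[\log\bZ(\bY)\right],
\]
so both claimed limits reduce to showing $\E_{\P}[\log\bZ(\bY)]=o(\log M)$.

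Fix a small $\delta>0$ and split $\bZ(\bY)=\bZ_{\textup{cl}}(\bY)+\bZ_{\textup{f}}(\bY)$, where $\bZ_{\textup{cl}}$ counts the $s\subseteq\bY$ with $|s\cap\bS|\ge K(1-\delta)$ and $\bZ_{\textup{f}}$ the rest. The true signal is always close to itself, so $\bZ_{\textup{cl}}\ge 1$. The first step is to use the ``all'' hypothesis to control $\bZ_{\textup{f}}$: expanding $\mmse_N(p)/K$ via uniformity of the posterior yields
\[
\mmse_N(p)/K=\E_{\P}\!\left[\frac{1}{K\bZ(\bY)}\sum_{s\subseteq\bY}\bigl(K-|\bS\cap s|\bigr)\right]\ge\delta\,\E_{\P}\!\left[\bZ_{\textup{f}}(\bY)/\bZ(\bY)\right],
\]
so $\E_{\P}[\bZ_{\textup{f}}/\bZ]\le\mmse_N(p)/(K\delta)=o_N(1)$. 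Markov then gives $\P_p(\bZ_{\textup{f}}>\bZ_{\textup{cl}})\to 0$, and splitting $\E_{\P}[\log\bZ(\bY)]$ over this event (where $\log\bZ\le\log 2+\log\bZ_{\textup{cl}}$) and its complement (where we use the trivial $\log\bZ\le\log M$) produces
\[
\E_{\P}[\log\bZ(\bY)]\le\log 2+\E_{\P}[\log\bZ_{\textup{cl}}(\bY)]+o_N(1)\cdot\log M.
\]

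Next I would bound the middle term by passing to the larger pointwise quantity $N_{\bS}\equiv|\{s\in\cS:|s\cap\bS|\ge K(1-\delta)\}|$ and applying Jensen together with a double counting over $\cS\times\cS$:
\[
\E_{\P}[\log N_{\bS}]\le\log\E_{\P}[N_{\bS}]=\log\!\Bigl(M\cdot\bP^{\otimes 2}\bigl(|\bS\cap\bS'|\ge K(1-\delta)\bigr)\Bigr).
\]
The replica weak separation hypothesis \eqref{e:rws} says this is at most $\eta(\delta)\log M$ with $\eta(\delta)\downarrow 0$ as $\delta\downarrow 0$. Combining the two displays and taking $N\to\infty$ gives $\limsup_N\E_{\P}[\log\bZ(\bY)]/\log M\le\eta(\delta)$; sending $\delta\downarrow 0$ completes the argument. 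The alternative form $\log H(\bS\mid\bY)/\log M\to 0$ follows from $H(\bS\mid\bY)=o(\log M)$ together with $\log M\to\infty$.

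The main obstacle is getting the quantifier order right: the ``all'' hypothesis only delivers $\bZ_{\textup{f}}/\bZ=o_N(1)$ after $\delta$ has been fixed, while RWS only delivers the bound $\eta(\delta)\downarrow 0$ after first taking $\limsup_{N\to\infty}$. One must therefore fix $\delta$, send $N\to\infty$, and only then send $\delta\downarrow 0$, which is exactly the order of limits written into Definition~\ref{d:rws}; trying to let $\delta$ shrink with $N$ simultaneously would lose control of one of the two estimates.
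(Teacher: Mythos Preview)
Your proof is correct and follows essentially the same route as the paper: reduce to showing $\E_{\P}[\log\bZ(\bY)]=o(\log M)$, split $\bZ$ into a ``close'' part and a ``far'' part, kill the far part using the ``all'' hypothesis, and bound the close part via Jensen together with RWS. Your version is in fact more explicit than the paper's in two places --- you spell out the inequality $\mmse_N(p)/K\ge\delta\,\E_{\P}[\bZ_{\textup{f}}/\bZ]$ that links the ``all'' hypothesis to the posterior ratio (the paper simply asserts $\bZ_\circ/\bZ\to 0$ in probability), and you are careful about the order of limits in $\delta$ and $N$.
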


\begin{proof}It follows using \eqref{e:P.vs.Q} and \eqref{e:radon.nikodym} that with $\P=\P_p$, we have
	\beq\label{e:I.p}
	I(p)
	=\sum_{\bS,\bY}
	\P(\bS,\bY)
	\log\frac{\P(\bS,\bY)}{\bP(S) \P(\bY)}
	= \E_{\P} \log\frac{M}{\bZ(\bY)}\,,
	\eeq
where we note that under the planted model we have $1\le \bZ(\bY)\le M$ with probability one. The assertion of the lemma can then be rewritten as
	\[
	1-\frac{I(p)}{\log M}
	= \frac{\E_{\P}\log\bZ(\bY)}{\log M}
	= o_N(1)\,.
	\]
We then decompose $\bZ(\bY)= \bZ_\circ(\bS,\bY)+\bZ_\bullet(\bS,\bY)$ where
	\[
	\bZ_\circ(\bS,\bY)
	= \sum_{\bS'\in\cS}
	\mathbf{1}\Big\{\bS'\subseteq\bY, |\bS\cap\bS'| < K(1-\delta)
		\Big\}\,,
	\]
and $\bZ_\bullet(\bS,\bY)$ is the remainder. The ``all'' assumption implies 
that for any fixed $\delta>0$ we have
	\[
	\frac{\bZ_\circ(\bS,\bY)}{\bZ(\bS,\bY)} \to0
	\]
in probability as $N\to\infty$. We then crudely bound
	\[
	1\le \bZ_\bullet(\bS,\bY)
	\le
	\sum_{\bS'\in\cS}
	\mathbf{1}\Big\{|\bS\cap\bS'| \ge K(1-\delta)
		\Big\}
	= M \bP
	\bigg(|\bS \cap \bS'| \ge (1-\delta)K \,\bigg| \,\bS\bigg)\,.
	\]
Combining with the \BM RWS \EH assumption~\eqref{e:rws} gives
	\[
	\adjustlimits
	\lim_{\delta\downarrow0}\limsup_{N\to\infty}
	\frac{\E_{\P}\log\bZ_\bullet(\bS,\bY)}{\log M}
	\le \adjustlimits
	\lim_{\delta\downarrow0}\limsup_{N\to\infty}
	\frac{\log \E_{\P}\bZ_\bullet(\bS,\bY)}{\log M}
	\stackrel{\eqref{e:rws} }{=}0\,.
	\]
It follows by combining the above bounds that
$\bZ_\circ(\bS,\bY)
\ll \bZ_\circ(\bS,\bY)\le M^{o_N(1)}$,
and consequently we have $\bZ(\bY)\le M^{o_N(1)}$,
with high probability under $\P$.
\end{proof}

\begin{corollary}\label{c:paon.below.pc}
Suppose the prior $\bP$ satisfies the RWS condition (Definition~\ref{d:rws}). If the planted model $\P_p$ has all-or-nothing at the exponential scale at threshold $\pAoN$, then we must have $\pAoN \le (\pmmt)^{1-o_N(1)}$ for $\pmmt$ as in \eqref{def:p1M}.
\end{corollary}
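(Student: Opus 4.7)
The plan is to extract a lower bound on $\log(1/\pAoN)$ by combining the ``all''-regime mutual information estimate of Lemma~\ref{lem:ent_post} with a simple additive identity between $I(p)$ and $D(p)$. First I would add the expressions \eqref{e:D.p} and \eqref{e:I.p} to obtain
\[
D(p) + I(p) = \E_{\P}\log\frac{\bZ(\bY)}{M p^K} + \E_{\P}\log\frac{M}{\bZ(\bY)} = K\log\frac{1}{p}\,,
\]
and since $D(p) \ge 0$ this yields the one-sided bound $I(p) \le K\log(1/p)$.

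Next I would invoke Lemma~\ref{lem:ent_post}, which uses the RWS hypothesis to guarantee that $I(p) = (1 - o_N(1))\log M$ whenever the model is in the ``all'' regime at $p$. Combining this with the previous inequality and recalling $\pmmt = M^{-1/K}$ from \eqref{def:p1M}, I obtain
\[
\log\frac{1}{p} \;\ge\; \frac{(1 - o_N(1))\log M}{K} \;=\; (1 - o_N(1))\log\frac{1}{\pmmt}
\]
for every such $p$. For each fixed $\delta > 0$, the definition of AoN at the exponential scale ensures that the model is in the ``all'' regime at $p_\delta \equiv (\pAoN)^{1+\delta}$, so substituting $p = p_\delta$ above gives
\[
(1+\delta)\log\frac{1}{\pAoN} \;\ge\; (1 - o_N(1))\log\frac{1}{\pmmt}\,.
\]
Taking $N \to \infty$ first and then $\delta \downarrow 0$ (via a standard diagonal sequence to convert this into an unconditional $o_N(1)$ rate) yields $\liminf_{N \to \infty} \log(1/\pAoN)/\log(1/\pmmt) \ge 1$, which is precisely the claimed bound $\pAoN \le (\pmmt)^{1 - o_N(1)}$.

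I do not foresee any substantive obstacle here: the genuine content has already been isolated in Lemma~\ref{lem:ent_post}, where the RWS condition is used to control the contribution of near-signal configurations to $\E_\P\log\bZ(\bY)$. The remainder is just the two-line identity $D + I = K\log(1/p)$ combined with routine manipulation of the AoN definition.
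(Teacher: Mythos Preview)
Your argument is correct and follows the same strategy as the paper: invoke Lemma~\ref{lem:ent_post} at $p_0=(\pAoN)^{1+\delta}$ to get $I(p_0)=(1-o_N(1))\log M$, combine with the inequality $I(p)\le K\log(1/p)$, and send $\delta\downarrow0$. Your derivation of that inequality via the identity $D(p)+I(p)=K\log(1/p)$ together with $D(p)\ge0$ is in fact cleaner than the paper's printed justification (which writes the inequality with the sign reversed).
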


\begin{proof}
Since $\bZ(\bY)\ge1$ always, it follows from \eqref{e:I.p} that for all $p\in[0,1]$ we have
	\[
	I(p) \ge \log \frac{1}{p^K}\,.
	\]
Take $\delta>0$ and let $p_0=(\pAoN)^{1+\delta}$: this is in the ``all'' regime, so combining with Lemma~\ref{lem:ent_post} gives
	\[
	(1-o_N(1))\log M = I(p_0) \ge \log \frac{1}{(p_0)^K} 
	= K(1+\delta)\log\frac{1}{\pAoN}\,.
	\]
Dividing through by $K$ gives
	\[
	(1-o_N(1))\log \frac{1}{\pmmt}
	\stackrel{\eqref{def:p1M}}{=}
	(1-o_N(1))\log M^{1/K}
	\ge(1+\delta)\log\frac{1}{\pAoN}\,,
	\]
and the claim follows by sending $\delta\downarrow0$.
\end{proof}

\begin{proof}[Proof of Theorem \ref{thm:locating}]
Follows by combining Corollaries \ref{c:paon.above.pc} and \ref{c:paon.below.pc}.
\end{proof}

\subsection{First-moment-stable graphs satisfy RWS}
\label{ss:first.mmt.stable}

Recall from Definition~\ref{d:first.mmt.stable} that a graph sequence $H=(H_n)_{n\ge1}$ is \emph{first-moment-stable} if 
for all $J\subseteq H$ with $|J|\ge |H|(1-o(1))$ we have
$\pmmt(J) = \pmmt(H)^{1+o(1)}$. Recall also the notations $M_{J,H}$ and $M_{H|J}$ from \eqref{e:double.count}. In this subsection we give an equivalent characterization of first-moment-stability, 
and also show that it implies RWS.


\begin{lemma}[alternate characterization of first-moment-stability]
\label{l:M.given.H.right.cts}
If $\pmmt(H)$ is bounded away from one, then 
$(H_n)_{n\ge1}$ is first-moment-stable in the sense of Definition~\ref{d:first.mmt.stable}
if and only if $M_{H|J} \le (M_H)^{o_\delta(1)}$
uniformly over $J\subseteq H$ with $|J|\ge |H|(1-\delta)$.
\end{lemma}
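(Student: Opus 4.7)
The cornerstone of the argument is the double-counting identity just recalled in \eqref{e:double.count}, rewritten as $M_J \cdot M_{H|J} = M_H \cdot M_{J,H}$. Taking logarithms, dividing through by $L \equiv |J|$, and invoking $\log(1/\pmmt(G)) = \log M_G/|G|$ from \eqref{e:graph.first.mmt.threshold}, the plan is to derive the clean identity
\[
\log\frac{1}{\pmmt(J)}
= \frac{K}{L}\,\log\frac{1}{\pmmt(H)}
+ \frac{\log M_{J,H}}{L} - \frac{\log M_{H|J}}{L},
\]
with $K \equiv |H|$. The entire lemma will then reduce to controlling the two error terms on the right.

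For the first error term I would use the crude combinatorial bound: a copy of $J\subseteq H$ is determined by its edge subset, so $M_{J,H} \le \binom{K}{L} \le (e/\delta)^{K\delta}$ whenever $L \ge K(1-\delta)$, giving $\log M_{J,H} \le K\delta\log(e/\delta)$. Combining this with the hypothesis that $\pmmt(H)$ is bounded away from $1$ --- equivalently, $\log M_H = K\log(1/\pmmt(H)) \ge cK$ for a constant $c>0$ independent of $n$ --- yields
\[
\frac{\log M_{J,H}}{\log M_H} \;\le\; \frac{\delta\log(e/\delta)}{c} \;=\; o_\delta(1),
\]
uniformly in $n$ and in $J$. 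Similarly, $K/L = 1/(1-\delta) = 1 + o_\delta(1)$. Substituting both estimates into the identity above, and using the arithmetic $L\log(1/\pmmt(H)) = (1-\delta)\log M_H$, I would obtain the uniform expression
\[
\frac{\log(1/\pmmt(J))}{\log(1/\pmmt(H))}
\;=\; 1 + o_\delta(1) - \frac{\log M_{H|J}}{(1-\delta)\log M_H}.
\]

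The equivalence claimed in the lemma then falls out by inspection of this last display. Since $M_{H|J} \ge 1$ (one may always extend a copy of $J$ by $H$ itself), the last term is nonpositive, so the ratio is at most $1+o_\delta(1)$ unconditionally; this recovers the ``automatic'' direction mentioned in Definition~\ref{d:first.mmt.stable}. First-moment-stability is exactly the reverse inequality that the ratio is at least $1 - o_\delta(1)$ uniformly, and the identity shows this holds if and only if $\log M_{H|J}/\log M_H = o_\delta(1)$ uniformly, i.e., $M_{H|J} \le M_H^{o_\delta(1)}$, as claimed. There is no real obstacle; the only place the hypothesis $\pmmt(H) \le 1 - \Omega(1)$ enters is in converting the absolute bound $\log M_{J,H} = o_\delta(K)$ into the relative bound $\log M_{J,H} = o_\delta(\log M_H)$, without which the lower-order term in the identity could overwhelm $\log M_{H|J}$.
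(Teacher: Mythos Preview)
Your proof is correct and follows essentially the same route as the paper's: both arguments pivot on the double-counting identity $M_H/M_J = M_{H|J}/M_{J,H}$ from \eqref{e:double.count}, bound $M_{J,H} \le \binom{|H|}{|J|} \le (M_H)^{o_\delta(1)}$ via the hypothesis $|H| = O(\log M_H)$, and read off the equivalence. The paper manipulates the ratio $\pmmt(J)^{|J|}/\pmmt(H)^{|H|}$ directly, whereas you take logs and normalize per edge to obtain an explicit formula for $\log(1/\pmmt(J))/\log(1/\pmmt(H))$; the content is the same. One cosmetic point: in your final display the denominator $(1-\delta)\log M_H$ should strictly be $(L/K)\log M_H$, which lies in $[(1-\delta)\log M_H,\log M_H]$, but since $K/L = 1+o_\delta(1)$ this is harmless for the conclusion.
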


\begin{proof}
Recall from \eqref{def:p1M} that the first moment threshold of $J$ is given by 
	\[
	\pmmt(J) = \frac{1}{(M_J)^{1/|J|}}\,.
	\]
In particular, note that $\pmmt(H)$ is bounded away from one if and only if $|H|=O(\log M)$. Next, for all subgraphs $J\subseteq H$ with $|J|\ge |H|(1-\delta)$, it holds uniformly that
	\[
	1\le M_{J,H}
	\le 
	\binom{|H|}{|J|}
	\le \exp(|H| o_\delta(1))
	\le (M_H)^{o_\delta(1)}\,,
	\]
having used the assumption $|H|=O(\log M_H)$.
Now recall from \eqref{e:double.count} that
	\[
	\frac{\pmmt(J)^{|J|}}{\pmmt(H)^{|H|}}
	= \frac{M_H}{M_J}
	\stackrel{\eqref{e:double.count}}{=} \frac{M_{H|J}}{ M_{J,H} }\
	= \frac{M_{H|J}}{ (M_H)^{o_\delta(1)}}
	\ge \frac{1}{ (M_H)^{o_\delta(1)}}\,.
	\]
It follows that for $J\subseteq H$ with $|J|\ge |H|(1-o(1))$,
we have
 $\pmmt(J)\ge \pmmt(H)^{1+o(1)}$ always,
and we have $\pmmt(J)\le \pmmt(H)^{1-o(1)}$ (hence first-moment-stability) if and only if $M_{H|J} \le (M_H)^{o_\delta(1)}$. 
\end{proof}

\begin{corollary}[first-moment-stability implies RWS]
\label{c:right.cts.implies.weak.sep}
Let $\bP$ be the uniform prior on all copies of $H_n$ in $K_n$.
If $\pmmt(H)$ is bounded away from one, and $(H_n)_{n\ge1}$ is first-moment-stable in the sense of Definition~\ref{d:first.mmt.stable}, then $\bP$ satisfies RWS in the sense of Definition~\ref{d:rws}.
\end{corollary}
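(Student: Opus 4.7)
The plan is to verify the RWS condition~\eqref{e:rws} directly by enumeration, exploiting the symmetry of $\bP$ to fix a reference copy. Writing $M\equiv M_H$ and using that $\bP$ is uniform on the $M$ copies of $H$ in $K_n$, for any fixed copy $H_0$ we have
\[
M\cdot\bP^{\otimes 2}\Big(|\bH\cap\bH'|\ge (1-\delta)|H|\Big)
= \bigg|\Big\{H'\textup{ a copy of }H\textup{ in }K_n : |H_0\cap H'|\ge (1-\delta)|H|\Big\}\bigg|.
\]
So it suffices to show that the right-hand side is at most $M^{o_\delta(1)}$, since then taking logs, dividing by $\log M$, and sending $\delta\downarrow 0$ yields \eqref{e:rws}.

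The key idea is to stratify the right-hand count by the intersection $J := H'\cap H_0$, which is a subgraph of $H_0$ with $|J|\ge (1-\delta)|H|$. This factors the count as
\[
\textup{(number of such }J\subseteq H_0\textup{)}
\;\times\;
\max_{J}\;M_{H|J},
\]
where $M_{H|J}$ is the number of copies of $H$ in $K_n$ containing a fixed copy of $J$. The hypothesis that $\pmmt(H)$ is bounded away from one translates to $|H|=O(\log M)$ (this was already used in the proof of Lemma~\ref{l:M.given.H.right.cts}). Given that, a standard binomial estimate bounds the number of candidate $J$ by
\[
\sum_{\ell\le \delta |H|}\binom{|H|}{\ell}
\le O(|H|)\cdot\Big(\tfrac{e}{\delta}\Big)^{\delta |H|}
= M^{O(\delta\log(1/\delta))}
= M^{o_\delta(1)}.
\]
For the second factor, first-moment-stability together with Lemma~\ref{l:M.given.H.right.cts} gives $M_{H|J}\le M^{o_\delta(1)}$ uniformly over $J\subseteq H$ with $|J|\ge (1-\delta)|H|$, which is exactly what is needed.

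Multiplying the two bounds gives the desired $M^{o_\delta(1)}$ estimate on the count and hence RWS. The argument is short because Lemma~\ref{l:M.given.H.right.cts} has already reformulated first-moment-stability into the precise extension bound we need; the only mild point to watch is that the binomial factor genuinely vanishes in the exponent as $\delta\downarrow 0$, which requires the hypothesis $|H|=O(\log M)$. I do not anticipate any substantive obstacle beyond keeping careful track of the $o_\delta(1)$ quantities.
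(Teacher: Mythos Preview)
Your proposal is correct and follows essentially the same approach as the paper's proof: both stratify the count by the intersection subgraph $J\subseteq H_0$, bound the number of admissible $J$ by a binomial coefficient of size $M^{o_\delta(1)}$ (using $|H|=O(\log M)$), and bound the extension count $M_{H|J}$ via Lemma~\ref{l:M.given.H.right.cts}. The only cosmetic difference is that the paper writes a single binomial $\binom{|H|}{|H|(1-\delta)}$ rather than your sum $\sum_{\ell\le\delta|H|}\binom{|H|}{\ell}$, but these agree up to a negligible factor.
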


\begin{proof} 
We can bound
	\[\bP^{\otimes 2}\Big(|\bH \cap \bH'| \ge (1-\delta)K \Big) 
	M
	\le
	\binom{|H|}{|H|(1-\delta)}
	\sup\bigg\{
	 M_{H|J}
	 : J\subseteq H, |J| \ge |H|(1-\delta)\bigg\}\,,
	\]
so the claim follows by applying Lemma~\ref{l:M.given.H.right.cts} and sending $\delta\downarrow0$.
\end{proof}

Thus, Corollary~\ref{c:right.cts.implies.weak.sep} tells us that Theorem~\ref{thm:locating} applies to first-moment-stable graphs.

%
%
%

%
%


\subsection{Theorem~\ref{t:aon.exp} forward direction: AoN implies first-moment-flat}
\label{ss:exp.forward}

We now turn to the proof of Theorem~\ref{t:aon.exp}.
The result holds for the following class of graph sequences: all graphs that have first moment threshold tending to zero, or that contain a subpolynomial number of vertices. To this end we introduce the following useful technical condition:
	\beq\label{e:good}
	\min\bigg\{|H|,v(H) \log v(H)\bigg\}
	=o(\log M_H)
	\eeq
The next lemma explains that \eqref{e:good} holds exactly in the cases mentioned above:

\begin{lemma}\label{l:good}
We have the following equivalences:
\begin{enumerate}[(a)]
\item $\pmmt(H)=o(1)$ if and only if $|H| \ll \log M_H$;
\item $v(H) \le n^{o(1)}$ if and only if $v(H)\log v(H) \ll \log M_H$.
\end{enumerate}
If either of the above holds, then $H$ satisfies \eqref{e:good}.
\end{lemma}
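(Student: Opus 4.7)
The plan is to read off everything from the defining relation
\[
\pmmt(H) = (M_H)^{-1/|H|}, \qquad \log\frac{1}{\pmmt(H)} = \frac{\log M_H}{|H|},
\]
combined with the elementary two-sided bound (already recorded as \eqref{e:trivial.bounds.on.M})
\[
\binom{n}{v(H)} \le M_H \le n^{v(H)},
\]
which follows from $M_H = (n)_{v(H)}/|\Aut H|$ together with $1 \le |\Aut H| \le v(H)!$. These two ingredients are enough; no probabilistic content is needed.

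For part (a), the equivalence is a direct rearrangement: the statement $\pmmt(H) = o(1)$ is the same as $\log(1/\pmmt(H)) \to \infty$, and by the identity above this is the same as $\log M_H / |H| \to \infty$, i.e.\ $|H| = o(\log M_H)$. For part (b), take logarithms of the two-sided bound on $M_H$ and use $\log \binom{n}{k} \ge k\log(n/k)$ to get
\[
v(H)\bigl(\log n - \log v(H)\bigr) \;\le\; \log M_H \;\le\; v(H) \log n .
\]
If $v(H) \le n^{o(1)}$, i.e.\ $\log v(H) = o(\log n)$, then the left side is $(1-o(1))\, v(H)\log n$, so $\log M_H = \Theta(v(H)\log n)$ and therefore $v(H)\log v(H) = o(v(H)\log n) = o(\log M_H)$. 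Conversely, if $v(H)\log v(H) = o(\log M_H)$, then combining with $\log M_H \le v(H)\log n$ yields $\log v(H) = o(\log n)$, i.e.\ $v(H) \le n^{o(1)}$.

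The final claim of the lemma is immediate: the minimum $\min\{|H|, v(H)\log v(H)\}$ is bounded by each of the two quantities separately, so as soon as either (a) or (b) is in force the minimum is $o(\log M_H)$, which is precisely condition \eqref{e:good}. There is no genuine obstacle to any of this; the only small care needed is with degenerate cases (such as $v(H) = O(1)$), which are handled by the trivial observation that $\log M_H \to \infty$ whenever $n \to \infty$ and $H$ is nonempty.
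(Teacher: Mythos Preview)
Your proof is correct and follows essentially the same approach as the paper: part (a) is immediate from $\pmmt(H)=(M_H)^{-1/|H|}$, and part (b) is obtained from the two-sided bound \eqref{e:trivial.bounds.on.M} exactly as you do. The paper is terser, but the logic is identical.
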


\begin{proof}
The first claim (a) follows immediately from \eqref{e:graph.first.mmt.threshold}.
For the second claim (b), 
note that the upper bound in
\eqref{e:trivial.bounds.on.M} says $\log M_H \le v(H)\log n$, so 
$v(H)\log v(H) = o(\log M_H)$ implies $v(H) \le n^{o(1)}$. 
Conversely, if $v(H) \le n^{o(1)}$, then combining with the lower bound in 
\eqref{e:trivial.bounds.on.M} gives
	\[
	\log M_H
	\gtrsim v(H) \log \frac{n}{v(H)}
	\gg v(H) \log v(H)\,.
	\]
This proves (b).
\end{proof}

Recall that we assume $\pmmt(H)$ is bounded away from one, or equivalently $|H|=O(\log M_H)$. As a consequence, Lemma~\ref{l:good} case (b) includes dense graphs (Definition~\ref{d:dense}), since 
for such graphs we have
	\[
	\frac{v(H) \log v(H)}{\log M_H}
	\ll \frac{|H|}{\log M_H} \le O(1)\,.
	\]
However, Theorem~\ref{t:aon.exp} goes beyond dense graphs, \BM as discussed above. \EH

\begin{proof}[Proof of Theorem~\ref{t:aon.exp} forward direction]
We argue by contradiction. Assume AoN occurs at the exponential scale, but the graph is not first-moment-flat. This means that for some $0<\delta<q<1$ there must exist a subgraph $J\subseteq H$ with $|J|\ge |H|\delta$ and
	\[
	\frac{1}{(\exp(\lambda_q))^{1-\delta}}
	\le \pmmt(J)^{1+\delta} \,.
	\]
Let $0<\epsilon \ll \delta$. Since
$H$ is first-moment-stable, Corollary~\ref{c:right.cts.implies.weak.sep} gives that the uniform measure $\bP$ on copies of $H$ in $K_n$ satisfies RWS (Definition~\ref{d:rws}). It then follows by Theorem~\ref{thm:locating}
(in fact by Corollary~\ref{c:paon.below.pc}) that
$p_{\mathrm{AoN}} 
\le (\pmmt(H))^{1-o(1)}$.
Moreover, the first-moment-stable assumption gives 
	\beq\label{e:appl.right.cts}
	\lim_{q\uparrow1} \lambda_q
	= \lambda_1= \log \frac{1}{\pmmt(H)}
	=\frac{ \log M_H}{|H|}
	\,,
	\eeq
so for $q$ sufficiently close to one we will also have
	\[p_{\mathrm{AoN}} 
	\le \frac{1}{(\exp(\lambda_q))^{1-\epsilon}}\,.\]
%
Consequently, for $q$ close to one,\footnote{In fact, in light of \eqref{e:appl.right.cts} we can take $q=1$. However in this argument we use $q<1$ to highlight the places where the first-moment-stability assumption is required.} we can take $p$ to satisfy
	\beq\label{e:choice.of.p}
	\pAoN 
	\le \frac{1}{(\exp(\lambda_q))^{1-\epsilon}}
	\le p  
	\le \frac{1}{(\exp(\lambda_q))^{1-\delta}}
	\le \pmmt(J)^{1+\delta}\,.\eeq
Now recall that $\bH$ denotes the planted copy of $H$.
Since $p$ is in the ``nothing'' regime, with high probability the observed graph $\bY=\bH\cup\bG$ contains another copy $H'$ of $H$ which has negligible overlap with planted copy $\bH$, in the sense that
	\[
	\frac{|\bH\cap H'|}{|\bH'|} = o(1)\,.
	\]
It follows that under the null model $\bG\sim G(n,p)$ contains, with high probability, an \emph{approximate copy} of $H$ --- that is, a subgraph on $v(H)$ vertices which can be made into a copy of $H$ by adding at most $o(|H|)$ edges. 
As a consequence, $\bG$ also contains with high probability an approximate copy of the graph $J$ from \eqref{e:choice.of.p}, since
$|J|\ge |H|\delta$.

Let $\bZ_\eta=\bZ_{J,\eta}(\bY)$ count the total number of $\eta$-approximate copies of $J'\approx J$ contained in $\bY$ --- that is, subgraphs on $v(J)$ vertices which can be made into a copy of $J$ by adding at most $\eta|J|$ edges. The preceding argument shows that
$\bZ_\eta\ge1$ with high probability under the null model $\Q=\Q_p$.
We will derive a contradiction by showing that $\E_{\Q} \bZ_\eta<1$. To this end note
	\[
	\E_{\Q} \bZ_\eta
	= \sum_{J'\approx J}
	M_{J'} p^{|J'|}
	\le
	\sum_{J'\approx J}
	M_{J'} p^{|J|(1-\eta)}
	\,.
	\]
The number of $J'\approx J$ is $\exp(|J|o_\eta(1))$. On the other hand, for any $J'\subseteq J$ with $|J'|\ge |J|(1-\eta)$, 
	\[
	M_{J'}
	= \frac{M_J M_{J',J}}{M_{J|J'}}
	\le M_J M_{J',J}
	\le M_J \exp( |J| o_\eta(1))\,.
	\]
(this is the same reasoning as the easy direction of Lemma~\ref{l:M.given.H.right.cts}.) It follows from \eqref{e:choice.of.p} that
	\[
	M_J
	=\frac{1}{(\pmmt(J))^{|J|}}
	\le\bigg( \frac{1}{p^{|J|}}\bigg)^{1/(1+\delta)}
	\le \frac{1}{p^{|J|(1-2\delta/3)}}\,.
	\]
Combining these bounds gives
	\[
	\E_{\Q} \bZ_\eta
	\le
	\exp(|J|o_\eta(1))
	\frac{p^{|J|(1-\eta)} }{p^{|J|(1-2\delta/3)}}
	\le
	\exp(|H|o_\eta(1))
	p^{|J|\delta/2}
	\,,
	\]
where the last inequality holds by taking $\eta\ll\delta$. Recalling \eqref{e:choice.of.p} again now gives
	\[
	\E_{\Q} \bZ_\eta
	\le
	\frac{\exp(o_\eta(1) \log M_H)}
	{(\exp(\lambda_q))^{|J|\delta(1-\delta)/2}}\,.
	\]
To conclude, recall from \eqref{e:appl.right.cts} that
for $q$ sufficiently close to one we will have
$|H|\lambda_q \gtrsim \log M_H$, which allows us to conclude that the above bound is $o_N(1)$. \EH
\end{proof}

\subsection{Theorem~\ref{t:aon.exp} reverse direction: first-moment-flat implies AoN}
\label{ss:exp.reverse}

\begin{proof}[Proof of Theorem~\ref{t:aon.exp} reverse direction]
Since we assumed $|H_n|\to\infty$ and $\pmmt(H_n)$ bounded away from one, it follows that
$\log |H| \ll |H| \le O(\log M_H)$, so we can apply
Theorem~\ref{t:bern.aon.exp}.
Hence it suffices to check the growth condition \eqref{e:weak.growth}, that is,
\beq\label{weak_growth_2}
\limsup_{n\to\infty}
\sup_{0\le\ell\le |H|} \frac{1}{\log M_H}
\log
\bigg(  
\frac{\bP^{\otimes 2}(|\bH\cap \bH'| = \ell)}{\pmmt(H)^\ell}
\bigg)
 \leq 0\,,
\eeq where $G_1$ and $G_2$ are i.i.d.\ draws from $\bP$, the uniform measure over all copies of $G$ in $K_n$.

Now notice that for $\ell \ll |H|$ the bound \eqref{weak_growth_2} is immediate, since in this case
	\[\frac{\bP^{\otimes 2}(|\bH\cap \bH'| = \ell)}{\pmmt(H)^\ell}
	\le \frac{1}{\pmmt(H)^\ell}
	= (M_H)^{\ell/|H|} \le  (M_H)^{o(1)}\,.
	\]
It therefore suffices to bound the case $\delta|H| \le\ell\le |H|$. Let $H_0$ be any fixed copy of $H$ in $K_n$, and note that
	\[
	\bP^{\otimes 2}\Big(|\bH\cap \bH'| = \ell\Big)
	\le
	\sum_{J_0\in I_\ell(H_0)}
	\bP( H_0\cap \bH = J_0)
	\]
where $I_\ell(H_0)$ is the set of all subgraphs $J_0\subseteq H_0$ with $|J_0|=\ell$ that can arise as an intersection of $H_0$ with another copy of $H$. We then bound
	\[
	\bP( H_0\cap \bH = J_0)
	\le \bP(J_0\subseteq\bH)
	= \frac{M_{H|J}}{M_H} = \frac{M_{J,H}}{M_J}\,.
	\]
By the first-moment-flat condition (Definition~\ref{d:first.mmt.flat}) together with the first-moment-stable assumption,
	\[
	\frac{\log M_J}{|H|} \ge (1-o(1)) \frac{\log M_H}{H}\,,
	\]
so we can bound
	\[
	\frac{1}{M_J}
	\le \bigg( \frac{1}{M_H}\bigg)^{(1-o(1)) \ell/|H|}
	\le (M_H)^{o(1)} \pmmt(H)^\ell \,.
	\]
Combining these bounds and rearranging gives
	\[\frac{\bP^{\otimes 2}(|\bH\cap \bH'| = \ell)}{\pmmt(H)^\ell }
	\le (M_H)^{o(1)} 
	\sum_{J_0\in I_\ell(H_0)} M_{J,H}
	\le (M_H)^{o(1)} 
	|I_\ell(H)|^2
	\]
To finish the proof, we now claim that
	\[\log |I_\ell(H)|
	\le O\bigg(
	\min\Big\{ e(H), v(H)\log v(H)\Big\}\,.
	\bigg)\,.\]
That the left hand side is at most $O(e(H))$ is clear. For the other part of the inequality, recall that a graph in $I_\ell(H)$ must be realized
 as a \BH (vertex-induced) \EH intersection of two copies of $H$. Hence, choosing the isomorphism class of the subgraph of $H$ ($\exp(O(v(H)))$ choices), the vertices used ($\exp(O(v(H)))$ choices), and the way to embed the graph in these vertices ($\exp(O(v(H)\log v(H)))$ choices) implies the desired result. By the assumption \eqref{e:good}, 
we have
	\[\frac{\log I_\ell(H)}{\log M_H}
	=o(1)\]
for all $\ell \geq \delta |H|$. This completes the proof.
\end{proof}

\section{Conclusion}
In this work we considered the model of a general subgraph $H=H_n$ planted in $G(n,p)$. We showed that, under various assumptions on $H$, the AoN phenomenon in the planted model
can be characterized in terms of the ``generalized expectation thresholds'' of $H$ in the null model $G(n,p)$. (See Theorems \ref{t:dense.linear.aon} and \ref{t:aon.exp} for the precise statements.) A natural question would be whether an AoN characterization can be obtained \emph{for all planted subgraphs $H$}. In a more general context, our results, alongside with the intuition described in Section~\ref{ss:intuition}, suggest that AoN can be characterized by merely studying structural properties of the ``solution space'' in the null model (corresponding, e.g., to the absence of a ``condensation phase'' in the language of random constraint satisfaction problems \cite{krzakala2007gibbs}). It would be interesting to investigate further this connection.

Lastly, as indicated above, sharp thresholds in boolean Fourier analysis have been long conjectured to be connected with computational hardness, see e.g.\ \cite{kalai2006perspectives}. A prime example of such a connection is the fact that bboolean circuits of ``low complexity'' do not exhibit sharp threshold behavior \cite[\S6]{kalai2006perspectives}. Meanwhile, on the inference side,
a large amount of work in the past decade has been devoted to studying the existence of ``computational-statistical'' gaps: regimes where the inference task is information-theoretically possible, but appears intractable by efficient algorithms.
Intriguingly, AoN (the inference analogue of sharp thresholds) has been empirically observed to appear (with a few puzzling exceptions) in models with a computational-statistical gap.  For instance, we have seen that AoN appears for the planted clique model (Corollary~\ref{c:planted.clique}), but not for the planted matching problem (Example~\ref{x:matching}).
Correspondingly, there is a substantial body of evidence towards a 
computational-statistical gap in the planted clique problem (e.g., \cite{barak2019nearly, feldman2017statistical,gamarnik2019landscape}), but the planted matching problem does not exhibit such a gap (the maximum matching is polynomial-time computable, and gives non-trivial recovery up to the information-theoretic threshold \cite{MR4350971,ding2021planted}). This leads us to ask:
\begin{quotation}
   \centering{ \emph{Is AoN a provable barrier for a subclass of polynomial-time methods?}}
\end{quotation} We consider this a natural and intriguing question for future work.

\section*{Acknowledgements}
We acknowledge the
support of Simons-NSF grant DMS-2031883 (E.M., Y.S., N.S., and I.Z.), the Vannevar Bush Faculty Fellowship
ONR-N00014-20-1-2826 (E.M., Y.S., and I.Z.), the Simons Investigator Award 622132 (E.M.), the Sloan Research
Fellowship (J.N.W.), NSF CAREER grant DMS-1940092 (N.S.), and the Solomon Buchsbaum Research Fund at MIT (N.S.).

\bibliographystyle{alphaabbr}
{\raggedright\bibliography{pc}}
\end{document}